\newtheorem{remark}{Remark}
\newtheorem{lemma}{Lemma}
\newtheorem{theorem}{Theorem}
\newtheorem{corollary}{Corollary}
\newtheorem{assumption}{Assumption}
\newtheorem{proposition}{Proposition}
\begin{document}

	\title{\LARGE \bf Globally-Constrained Decentralized Optimization with \\Variable Coupling }
	\author{Dandan Wang, Xuyang Wu, Zichong Ou, and Jie Lu} 
	\author{Dandan Wang, Xuyang Wu, Zichong Ou, and Jie Lu
		\thanks{D. Wang and Z. Ou are with the School of Information Science and Technology, ShanghaiTech University, Shanghai 201210, China. Email: {\tt \{wangdd2, ouzch\}@shanghaitech.edu.cn}.}	
				\thanks{X. Wu is with the School of Automation and Intelligent Manufacturing, Southern University of Science and Technology, Shenzhen 518055, China, and the State Key Laboratory of Autonomous Intelligent Unmanned Systems, Beijing 100081, China. Email: {\tt wuxy6@sustech.edu.cn}.}
\thanks{Jie Lu is with the School of Information Science and Technology, ShanghaiTech University and the Shanghai Engineering Research Center of Energy Efficient and Custom AI IC, Shanghai 201210, China. Email:{\tt lujie@shanghaitech.edu.cn}.}
\thanks{This work was supported in part by the State Key Laboratory of Autonomous Intelligent Unmanned Systems under grant No. ZZKF2025-1-3.}
}

%	\markboth{Submitted to IEEE Transactions on Automatic Control}
{}
\maketitle

\begin{abstract}
Many realistic decision-making problems in networked scenarios, such as formation control and collaborative task offloading, often involve complicatedly entangled local decisions, which, however, have not been sufficiently investigated yet. Motivated by this, we study a class of globally-constrained decentralized optimization problems with a variable coupling structure that is new to the literature. Specifically, we consider a network of nodes collaborating to minimize a global objective subject to a collection of global inequality and equality constraints, which are formed by the local objective and constraint functions of the nodes. On top of that, we allow such local functions to depend on not only the corresponding node's decision variable but the decisions of its neighbors as well. To address this problem, we propose a decentralized projected primal-dual algorithm, which incorporates gradient projection and virtual-queue techniques with a primal-dual-primal scheme. Under mild conditions, we derive $O(1/k)$ convergence rates for both objective error and constraint violations. Finally, two numerical experiments corroborate our theoretical results and illustrate the competitive performance of the proposed algorithm.
\end{abstract}

\begin{IEEEkeywords}
Decentralized optimization, globally-coupled constraint, primal-dual algorithm, variable coupling. 
\end{IEEEkeywords}

% =======================================================================================================

\section{Introduction}\label{sec:introduction}
Decentralized optimization has garnered considerable recent attention due to its broad applications in various networked systems such as communication networks\cite{Rabbat2004}, smart grids\cite{Magnusson2016}, and computing networks\cite{boyd2011distributed}. In these networked systems, each node possesses a collection of local data, typically inaccessible to other nodes for privacy protection, and all the nodes aim to make optimal decisions by solving a global optimization problem determined by all their local data. Decentralized optimization techniques allow the nodes to collaboratively solve such a problem through communicating with their neighboring nodes only, leading to high scalability with respect to the network size and the data volume.

To date, a host of decentralized optimization algorithms have been developed. Among these works, the majority focus on consensus optimization, which aims at finding a common decision that minimizes the sum of the local objectives under, mostly, no constraints \cite{Nedic2017,ShiW2015} or rather simple constraints such as set constraints \cite{WuXY2019,mai2019distributed}, local equality/inequality constraints\cite{yang2016multi, khatana2022dc}, and equality/inequality constraints known to all the nodes \cite{zhu2011distributed,yuan2015regularized}. Another line of research explores problems with local variables subject to coupling equality/inequality constraints \cite{Lakshmanan2008,Nedic2018,LinX2006,THChang2014,Falsone2017,Notarnicola2019,LiangS2019,falsone2023augmented,LiangS2019a,camisa2021distributed,LiuC2020,Mateos2016,wang2023distributed,wu2022distributed}. In particular, to solve the challenging problems with globally-coupled \textit{nonlinear inequality} constraints consisting of all the local constraint functions from the nodes, the distributed primal-dual subgradient algorithms \cite{THChang2014,Falsone2017, Mateos2016,LiuC2020,LiangS2019a,wang2023distributed,falsone2023augmented}, the distributed method based on operator splitting\cite{LiangS2019}, the distributed integrated primal-dual proximal algorithm \cite{wu2022distributed}, and the relaxation and successive distributed decomposition method\cite{Notarnicola2019} along with its extension \cite{camisa2021distributed} are developed. These works also derive various convergence results, including asymptotic convergence \cite{THChang2014,Falsone2017,Notarnicola2019,LiangS2019,falsone2023augmented} as well as $O(\ln k/\sqrt{k})$ \cite{LiangS2019a}, $O(1/\ln k)$ \cite{camisa2021distributed}, $O(1/\sqrt{k})$ \cite{LiuC2020,Mateos2016,wang2023distributed}, and $O(1/k)$ \cite{wu2022distributed} convergence rates.

A common limitation of the aforementioned decentralized optimization methods \cite{Nedic2017,ShiW2015,WuXY2019,mai2019distributed,Lakshmanan2008,Nedic2018,yang2016multi,khatana2022dc,zhu2011distributed,yuan2015regularized,LinX2006,THChang2014,Falsone2017,Notarnicola2019,LiangS2019,falsone2023augmented,LiangS2019a,camisa2021distributed,LiuC2020,Mateos2016,wang2023distributed,wu2022distributed} is that each node's local objective and constraint functions only rely on its own local decision variable. This excludes many realistic problems where each node's local function values are influenced by the decisions made by other nodes in the network, especially the node's neighbors. A representative example is the task offloading problem over cooperative mobile edge computing networks \cite{xiao2018distributed}, where each edge server's task completion time and local resource consumption are affected by the task offloaded from the neighboring servers. Such a variable coupling structure also arises in collaborative resource allocation \cite{huang2017collaborative}, distributed model predictive control \cite{WangCh2010}, wireless localization \cite{ShiW2010}, etc. 

A few existing works \cite{santilli2018finite,todescato2020partition,pu2016quantization,HuJH2018,XiaoYY2017,lu2024distributed,rai2024distributed,srivastava2021network,ran2023distributed,wang2024distributed,borst2013nonconcave,Mota2014,Alghunaim2019,li2021distributed,du2025distributed,alghunaim2021dual} allow for various variable coupling structures in local functions. However, they only focus on unconstrained problems\cite{santilli2018finite,todescato2020partition, li2021distributed}, problems with local constraint sets\cite{pu2016quantization}, locally-coupled constrained problems\cite{HuJH2018,XiaoYY2017,lu2024distributed,rai2024distributed,srivastava2021network,ran2023distributed,wang2024distributed,borst2013nonconcave, Mota2014, Alghunaim2019}, or globally-coupled affine constraints \cite{du2025distributed, alghunaim2021dual}, where \cite{XiaoYY2017,lu2024distributed,rai2024distributed,srivastava2021network,ran2023distributed} consider variable coupling in their local constraint functions but not in the local objectives. None of these works \cite{santilli2018finite,todescato2020partition,pu2016quantization,HuJH2018,XiaoYY2017,lu2024distributed,rai2024distributed,srivastava2021network,ran2023distributed,wang2024distributed,borst2013nonconcave,Mota2014,Alghunaim2019,li2021distributed,du2025distributed,alghunaim2021dual} consider globally-coupled nonlinear constraints like those in \cite{THChang2014,Falsone2017,Notarnicola2019,LiangS2019,falsone2023augmented,LiangS2019a,camisa2021distributed,LiuC2020,Mateos2016,wang2023distributed,wu2022distributed}.

Motivated by this, we consider a decentralized optimization problem with \textit{globally-coupled} nonlinear inequality and linear equality constraints. Moreover, for each node, we allow both the local objective function and the local constraint functions to involve \textit{variable coupling}, i.e., these local functions are associated with the decision variables of the node itself and all its neighbors. This coupling structure endows the optimization problem with a relatively general form that generalizes the prior prevalent decentralized optimization models in \cite{Nedic2017,ShiW2015,WuXY2019,mai2019distributed,yang2016multi,khatana2022dc,zhu2011distributed,yuan2015regularized,Lakshmanan2008,Nedic2018,LinX2006,THChang2014,Falsone2017,Notarnicola2019,LiangS2019,falsone2023augmented,LiangS2019a,camisa2021distributed,LiuC2020,Mateos2016,wang2023distributed,wu2022distributed} with no variable coupling in the local functions and the variable coupling models in \cite{santilli2018finite,todescato2020partition,pu2016quantization,HuJH2018,XiaoYY2017,lu2024distributed,rai2024distributed,srivastava2021network,ran2023distributed,wang2024distributed,borst2013nonconcave} with no constraints \cite{santilli2018finite,todescato2020partition} or with only local constraints \cite{pu2016quantization, HuJH2018,XiaoYY2017,lu2024distributed,rai2024distributed,srivastava2021network,ran2023distributed,wang2024distributed,borst2013nonconcave}.

In this paper, we develop a decentralized projected primal-dual algorithm for addressing the aforementioned globally-constrained optimization problem with variable coupling. To this end, we first construct a variant of the augmented Lagrangian function with respect to an equivalent form of the original problem, whose gradient has a certain degree of separability. Then, the primal update employs gradient projection to cause a descent of the augmented Lagrangian variant. The dual updates can be viewed as a non-trivial integration of ideas from the virtual queue method \cite{YuHao2017} and a dual variant of the P-EXTRA algorithm \cite{wu2019improved}. Eventually, the variable coupling structure and the globally-coupled constraints are effectively decoupled, so that our proposed algorithm can be executed in a fully decentralized fashion. We show that the proposed algorithm achieves $O(1/k)$ convergence rates in terms of both objective error and constraint violations under mild conditions. Its effectiveness and efficiency are demonstrated by two numerical examples.

The contributions of this paper are highlighted as follows:

\begin{itemize}
\item  To the best of our knowledge, this is the \textit{first} exploration of decentralized optimization with both variable coupling and globally-coupled constraints.
\item Compared to the existing works \cite{THChang2014,Falsone2017,Notarnicola2019,LiangS2019,falsone2023augmented,LiangS2019a,camisa2021distributed,LiuC2020,Mateos2016,wang2023distributed,wu2022distributed} that are able to solve problems with global inequality constraints but no variable coupling (i.e., special cases of our problem), the  $O(1/k)$ convergence rate of our proposed algorithm is stronger than the asymptotic convergence established in \cite{THChang2014,Falsone2017,Notarnicola2019,LiangS2019,falsone2023augmented}, faster than the convergence rates in \cite{LiangS2019a,LiuC2020,camisa2021distributed,Mateos2016,wang2023distributed}, and comparable to the rate in \cite{wu2022distributed}.

\item Our algorithm enjoys lower computational costs per iteration than many existing algorithms, because they require solving convex optimization problems at each iteration \cite{wu2022distributed,wang2023distributed,LiangS2019a,falsone2023augmented,camisa2021distributed,Notarnicola2019,Falsone2017,LiuC2020}, while our method only include simpler projection operations to update the primal variables. Additionally, our algorithm adopts constant step-sizes, which usually leads to faster convergence in practice compared to the algorithms with diminishing step-sizes \cite{THChang2014,Falsone2017,Notarnicola2019,LiangS2019a,camisa2021distributed,LiuC2020,Mateos2016,wang2023distributed}. 

\end{itemize}

This paper is organized as follows. Section~\ref{sec:probformulation} formulates the problem and presents examples of applications. Section~\ref{sec:algdevelop} develops the decentralized projected primal-dual algorithm and Section \ref{sec:convanal} provides the convergence analysis, followed by two numerical experiments in Section \ref{sec:example}. Finally, Section \ref{sec:conclusion} concludes the paper. %All the proofs are included in Appendix.

\textbf{Notation and Definition:} For any set $X\subseteq\mathbb{R}^d$, $\operatorname{rel\;int} X$ is its relative interior. For any vectors $x,y\in \mathbb{R}^n$, we denote by $\max\{x, y\}$ the element-wise maximum of $x$ and $y$.  Let $\mathcal{S}\subseteq\{1,\ldots,n\}$. Given $n$ column vectors $x_1,\ldots,x_n$, $x_{\mathcal{S}}$ denotes the vector obtained by vertically stacking $x_i$ $\forall i\in\mathcal{S}$; given $n$ sets $X_1,\ldots,X_n$, $X_\mathcal{S}$ denotes the Cartesian product of $X_i$ $\forall i\in\mathcal{S}$. We use $\|\cdot\|$ to denote the Euclidean norm and $\{\cdot,\cdot\}$ to represent an unordered pair. The projection of $x$ on a convex set $X$ is denoted by $\mathcal{P}_{X}(x)$, i.e., $\mathcal{P}_{X}(x):=\mathop{\arg\min}_{x' \in X}\|x'-x\|$. In addition, $I_d$ and  $\mathbf{O}_d$ are the $d\times d$ identity matrix and all-zero matrix, respectively, and $\mathbf{1}_d$ ($\mathbf{0}_d$) is the $d$-dimensional all-one vector (all-zero vector), where the subscripts may be omitted for simplicity. For any  real matrix $A\in\mathbb{R}^{m\times n}$, $[A]_{ij}$ is its $(i,j)$-entry, $\operatorname{Range}(A)$ is its range, $\operatorname{Null}(A)$ is its null space, and $\|A\|$ is its spectral norm. If $A\in \mathbb{R}^{n\times n}$ is symmetric and positive semidefinite, $\|x\|_A:=\sqrt{x^TAx}$ for any $x\in \mathbb{R}^n$. 

For any differential function $f:\mathbb{R}^n\rightarrow\mathbb{R}^m$, $\operatorname{dom}(f):=\{x:~f(x)<+\infty\}$ is the domain of $f$; $\partial f(x)$  denotes the Jacobian matrix of function $f$ at $x$ and $\nabla f$ is the gradient of $f$ if $m=1$. A function $f$ is said to be \textit{$L_f$-smooth} over a set $X$ if there exists $L_f \ge 0$ such that $\|\nabla f(x)-\nabla f(y)\| \le L_f\|x-y\|$, $\forall x,y \in X$. Given $L \ge 0$, $f$ is said to be \textit{Lipschitz continuous} with Lipschitz constant $L$ over a set $X$ if  $\|f(x)-f(y)\| \le L\|x-y\|$, $\forall x,y \in X$.

\section{Problem Formulation}\label{sec:probformulation}

This section describes a decentralized globally-constrained optimization problem with variable coupling and presents a couple of application examples.

Consider a network modeled as an undirected, connected graph $\mathcal{G}=(\mathcal{V}, \mathcal{E})$, where the vertex set $\mathcal{V}=\{1, \ldots,n\}$ represents the set of $n$ nodes and the edge set $\mathcal{E}\subseteq\{\{i,j\}:i,j\in\mathcal{V},\;i\neq j\}$ represents the set of bidirectional links. Let $\mathcal{N}_{i}=\{j\in\mathcal{V}:\{i,j\} \in \mathcal{E}~or~i=j\}$ be the neighbor set of node $i$ including itself. 

Suppose all the nodes collaborate to solve the following global convex optimization problem:
\begin{equation} \label{eq:primalprob}
\begin{array}{cl}
	{\underset{x_i \in \mathbb{R}^{d_{i}},\;\forall i \in \mathcal{V} }{\operatorname{minimize}}} ~& {\sum\limits_{i \in \mathcal{V}} f_{i}\left(x_{\mathcal{N}_{i}}\right)} \\
	{\operatorname{ subject~to }} & {\sum\limits_{i \in \mathcal{V}} g_{i}\left(x_{\mathcal{N}_{i}}\right) \leq \mathbf{0}_p}, \\
	{}&{\sum\limits_{i \in \mathcal{V}}} A_ix_{\mathcal{N}_{i}} = {\sum\limits_{i \in \mathcal{V}}}b_i,\\
	{} & {x_i \in X_i,\;\forall i \in \mathcal{V}.}
\end{array}
\end{equation}
For each $i\in\mathcal{V}$, $x_i\in\mathbb{R}^{d_i}$ is the decision variable of node $i$ and is restricted to node $i$'s constraint set $X_i\subseteq\mathbb{R}^{d_i}$. Additionally, $f_i:\mathbb{R}^{\sum_{j\in\mathcal{N}_{i}}d_j}\rightarrow \mathbb{R}$ is the local objective function and $g_i:\mathbb{R}^{\sum_{j\in\mathcal{N}_{i}}d_j}\rightarrow \mathbb{R}^p$ is the local inequality constraint function of node $i$, which are known only to node $i$ but are associated with the decision variables of both node $i$ and its neighbors, denoted by $x_{\mathcal{N}_{i}}$. 
Each $g_i$ is a vector-valued function, and we express it as $g_i\left(x_{\mathcal{N}_{i}}\right)\!=\![g_{i1}\left(x_{\mathcal{N}_{i}}\right), \ldots, g_{ip}\left(x_{\mathcal{N}_{i}}\right)]^T$. Moreover, $A_i \!\in\! \mathbb{R}^{m \times \sum_{j \in \mathcal{N}_i}d_j}$ and $b_i \in \mathbb{R}^{m}$, locally owned by node $i$, encode node $i$'s local component of the global equality constraint in \eqref{eq:primalprob}, which, again, involves the decision variables of all the nodes in $\mathcal{N}_i$. 

Compared to most distributed optimization models\cite{Nedic2017,ShiW2015,WuXY2019,mai2019distributed,yang2016multi,khatana2022dc,zhu2011distributed,yuan2015regularized,Lakshmanan2008,Nedic2018,LinX2006,THChang2014,Falsone2017,Notarnicola2019,LiangS2019,falsone2023augmented,LiangS2019a,camisa2021distributed,LiuC2020,Mateos2016,wang2023distributed,wu2022distributed,santilli2018finite,todescato2020partition,pu2016quantization,HuJH2018,XiaoYY2017,lu2024distributed,rai2024distributed,srivastava2021network,ran2023distributed,wang2024distributed,borst2013nonconcave} in the literature, problem~\eqref{eq:primalprob} has a \textit{more complicated and general coupling structure} and, thus, is more challenging to solve. Specifically, the problems considered in many existing works \cite{Nedic2017,ShiW2015,WuXY2019,mai2019distributed,yang2016multi,khatana2022dc,zhu2011distributed,yuan2015regularized,Lakshmanan2008,Nedic2018,LinX2006,THChang2014,Falsone2017,Notarnicola2019,LiangS2019,falsone2023augmented,LiangS2019a,camisa2021distributed,LiuC2020,Mateos2016,wang2023distributed,wu2022distributed} are ``separable'' in the sense that each local function belonging to node $i$ is only associated with node $i$'s decision variable $x_i$. In contrast, problem~\eqref{eq:primalprob} is non-separable, as each $f_i$, $g_i$, and $A_i$ is associated with $x_{\mathcal{N}_{i}}$, i.e., $x_j$ $\forall j \in \mathcal{N}_i$. In addition, $g_i$, $A_i$, $b_i$ $\forall i \in \mathcal{V}$ constitute the globally-coupled inequality and equality constraints in an additive manner. Such a non-separable structure is not uncommon in networked applications, as each node's cost, utility, or capacity, etc., is often influenced by its neighbors' states and plays a role in the network-wide problems. Similar variable coupling structures are also considered in \cite{santilli2018finite,todescato2020partition,pu2016quantization,HuJH2018,XiaoYY2017,lu2024distributed,rai2024distributed,srivastava2021network,ran2023distributed,wang2024distributed,borst2013nonconcave}, yet \cite{santilli2018finite,todescato2020partition} only investigate unconstrained convex optimization, \cite{pu2016quantization} focus on problems with local constraint sets, and \cite{HuJH2018,XiaoYY2017,lu2024distributed,rai2024distributed,srivastava2021network,ran2023distributed,wang2024distributed,borst2013nonconcave} study locally-coupled constrained problems, where no variable coupling arises in the local objective functions in \cite{XiaoYY2017,lu2024distributed,rai2024distributed,srivastava2021network,ran2023distributed}. 

\begin{remark}\label{rmk:othervarcouplingmodels}
There have been several other coupling models for distributed convex optimization with an additive global objective function. In \cite{Mota2014} and \cite{Alghunaim2019}, each element in a global decision vector is assigned to a subset of nodes that are not necessarily neighbors, and the local objective of each node $i$ is a function of all the elements assigned to node $i$. In \cite{li2021distributed,du2025distributed}, each node's local objective involves not only its own decision variable but also a common aggregative term associated with the decision variables of all the nodes. The global objective in \cite{alghunaim2021dual} consists of a separable function and a known-to-all function whose variable is a linear combination of all nodes' decision variables. The above-mentioned objective functions are different from that of problem~\eqref{eq:primalprob}, while problem~\eqref{eq:primalprob} admits a more general constraint model than those of \cite{Mota2014,Alghunaim2019,li2021distributed,alghunaim2021dual,du2025distributed}. In particular, \cite{Mota2014, Alghunaim2019} only allow for local constraints, \cite{li2021distributed} only considers an unconstrained problem, \cite{du2025distributed,alghunaim2021dual} only work for affine constraints, yet \eqref{eq:primalprob} involves both nonlinear inequality and linear equality global constraints with coupled variables.

In fact, by letting each node maintain local copies of the decision variables of all the other nodes or local copies of all the neighbors' decision variables, problem~\eqref{eq:primalprob} can be converted into a globally-coupled consensus optimization problem or an optimization problem with additional local consensus constraints, so that a handful of existing algorithms are applicable (e.g.,\cite{li2020distributed,huang2023distributed,Alghunaim2019,wu2022distributed}). However, such reformulations would significantly increase the  variable dimension and, thus, the costs in memory, computations, and communications \cite{HuJH2018}. Therefore, we do not adopt such approaches.
\end{remark}
We impose the following assumption on problem~\eqref{eq:primalprob}.
\begin{assumption}\label{asm:primalprob}
Problem~\eqref{eq:primalprob} satisfies the following conditions:
\begin{enumerate}[(a)]
	\item For each $i\in\mathcal{V}$, $X_i$ is a compact convex set. \label{compactXi}
	\item For each $i\in\mathcal{V}$, $\sum_{i \in \mathcal{V}} f_{i}\left(x_{\mathcal{N}_{i}}\right)$ and $g_i$ are convex on the Cartesian products $X_{\mathcal{V}}$ and $X_{\mathcal{N}_{i}}$, respectively.\label{gisumficonvex} 
	\item For each $i\in\mathcal{V}$, $f_i$ is $L_f$-smooth on $X_{\mathcal{N}_{i}}$ for some $L_{f}\ge 0$.\label{fismooth}
	\item For each $i\in\mathcal{V}$, $g_{ij}$ $\forall j\in \{1,\ldots,p\}$ are $L_{g}$-smooth on $X_{\mathcal{N}_{i}}$ for some $L_{g}\ge 0$.\label{gijsmooth}
	\item For each $i\in\mathcal{V}$, $g_i$ is Lipschitz continuous on $X_{\mathcal{N}_{i}}$ with Lipschitz constant $\beta> 0$.\label{giLipschitz}
	\item There exists an optimum $\mathbf{x}^\star=[(x_1^\star)^T, \ldots, (x_n^\star)^T]^T\in X_{\mathcal{V}}$ to problem~\eqref{eq:primalprob}.\label{least1soluprimal}
	\item There exist $\tilde{x}_i\in\operatorname{rel\;int}(X_i)$ $\forall i\in\mathcal{V}$ such that ${\sum_{i \in \mathcal{V}} g_{i}\left( \tilde{x}_{\mathcal{N}_{i}}\right)<\mathbf{0}_p}$ and ${\sum_{i \in \mathcal{V}}} A_i\tilde{x}_{\mathcal{N}_{i}} = {\sum_{i \in \mathcal{V}}} b_i$.\label{slater}
\end{enumerate}
\end{assumption}

Assumption~\ref{asm:primalprob} indicates that problem~\eqref{eq:primalprob} is a smooth convex optimization problem with zero duality gap due to the Slater's condition in Assumption~\ref{asm:primalprob}(\ref{slater}). Note that we only assume convexity for the global objective function, so that each local objective $f_i$ is not necessarily convex. Also, the Lipschitz conditions of each $\nabla f_i$, $\nabla g_{ij}$, and $g_i$ in Assumption~\ref{asm:primalprob}(\ref{fismooth}), \ref{asm:primalprob}(\ref{gijsmooth}), and \ref{asm:primalprob}(\ref{giLipschitz}) only need to hold on the compact constraint set $X_{\mathcal{N}_{i}}$. The conditions in Assumption~\ref{asm:primalprob} are mild and commonly adopted in existing works\cite{THChang2014,LiangS2019,wang2024distributed}. Many practical application problems satisfy the conditions in Assumption~1, including the two motivating examples in Section~\ref{sec:probformulation}-A. We will further compare these conditions with the assumptions adopted in related works in Section~\ref{ssec:compdiss}.

\subsection{Examples of Applications}
Many real-world engineering problems can be cast into the form of \eqref{eq:primalprob}. Below, we provide two such examples.

\subsubsection{A Multi-Vehicle Formation Stabilization Problem}
Consider a multi-vehicle formation stabilization problem. Our goal is to stabilize $n$ vehicles on a plane toward an equilibrium point in a decentralized way. For each vehicle $i$, $s_i^k=[(p_i^k)^T,(v_i^k)^T]^T\in\mathbb{R}^{4}$ is the state and $u_i^k\in\mathbb{R}^{2}$ is the control input at time $k\ge0$, where $p_i^k\in\mathbb{R}^{2}$, $v_i^k\in\mathbb{R}^{2}$ are the position and the velocity of vehicle $i$, respectively. The system dynamics of each vehicle $i$ is given by
\begin{equation}
s_{i}^{k+1}=A_{i} s_{i}^{k}+B_{i} u_{i}^{k}.\label{eq:stateeq}
\end{equation}
Given the initial states $s_i^0$ $\forall i=1,\ldots,n$, all the
vehicles obtain their control actions through model predictive control (MPC), which requires them to cooperatively solve a constrained convex optimization problem. The global cost is $L(s,u)=\sum_{i=1}^{n}\sum_{k=0}^{N-1}\alpha_1\sum_{j\in \mathcal{N}_i}\|p_i^k-p_j^k+d_{ij}\|^2+\alpha_2\|v_i^k\|^2+\alpha_3\|u_i^k\|^2+L^d_i$, where $N\ge1$ is the prediction horizon in MPC, $\alpha_i \forall i={1,2,3}$ are positive weighting constants, and $d_{ij}$ is the desired equilibrium position between node $i$ and $j$. The tracking cost $L^d_i$ is defined as $\alpha_4\|\sum_{i=1}^{m}p_i^k/m\!-\!p_d\|^2$ ($m<n$) if $i$ is one of the $m$ core vehicles; otherwise, $L^d_i=0$  \cite{dunbar2006distributed}. Let the optimization variable associated with each vehicle $i$ be $x_i=[(s_i^1)^T, \ldots, (s_i^{N-1})^T, (u_i^0)^T, \ldots, (u_i^{N-1})^T]^T\in\mathbb{R}^{6N}$. The multi-vehicle formation stabilization problem is formulated as the following optimization problem 
\begin{equation} \label{eq:coupledmpc}
\begin{array}{cl}
	{\underset{x_i,\;\forall i=1,\ldots,n}{\operatorname{minimize}}} ~& \sum\limits_{i=1}^{n} \Big(x_{\mathcal{N}_{i}}^TH_i x_{\mathcal{N}_{i}}+Q_i^Tx_{\mathcal{N}_{i}}\Big)  \\
	\operatorname{ subject~to } & \sum_{i=1}^{n}P_ix_{i}\le b_i,\\
	&C_ix_{i}=b_i',\;\forall i=1,\ldots,n,\\
	& x_i \in X_i,\;\forall i=1,\ldots,n.
\end{array}
\end{equation}
In \eqref{eq:coupledmpc}, each $H_i$ is a symmetric positive semidefinite matrix and each $Q_i \in\mathbb{R}^{6| \mathcal{N}_{i}|N}$, which can be obtained from the definition of $x_i$ and $L(s,u)$. In \eqref{eq:coupledmpc}, each vehicle $i$’s local cost is influenced by the positions of its neighboring vehicles \cite{dunbar2006distributed}. It can be verified that problem~\eqref{eq:coupledmpc} satisfies Assumption~1. The first constraint in \eqref{eq:coupledmpc} stands for resource limitations and position/velocity requirements across the whole system. The second constraint represents the dynamics of each vehicle $i$ over the prediction horizon, which is directly derived from \eqref{eq:stateeq}. The last constraint restricts each $x_i$ to some compact convex set $X_i$ due to certain system limitations. In \eqref{eq:coupledmpc}, $H_i$, $Q_i$,  $P_i$, $b_i$, $C_i$, $b_i'$, and $X_i$ are accessible only to vehicle $i$.  Clearly, the distributed MPC problem~\eqref{eq:coupledmpc} is a particular form of \eqref{eq:primalprob}.

\subsubsection{Task Offloading over Cooperative Mobile Edge Computing Networks} 
Consider a task offloading problem among $n$ edge servers forming a connected network. Each server receives a large number of latency-sensitive tasks from exogenous sources (e.g., mobile devices) and, due to limited and heterogeneous computation resources, needs to offload part of its tasks to neighboring servers to make full use of computation resources\cite{xiao2018distributed}. All the edge servers cooperate to find optimal task offloading decisions by minimizing the total completion time of all the tasks subject to the limitations of channel bandwidth or computation capability.

Let $\mathcal{V}=\{1,2,\ldots, n\}$ denote the set of edge servers, and $\mathcal{N}_{i}$ the neighbor set of edge server $i$ including itself. Define $x_{ii}$ and $x_{ij}$ as the quantity of tasks allocated by edge server $i$ to itself and to its neighboring edge server $j$, respectively, and $x_i=[x_{ij}]_{\forall j \in \mathcal{N}_i}$ as the task offloading decision of edge server $i$. Consider an $M/M/1$ queuing system for each edge server to process its received tasks, where the task arrival $\lambda_i$ per unit time follows a Poisson distribution and $\mu_{i}$ is the maximum task processing rate \cite{wang2024distributed}. According to the queuing theory, the processing latency of edge server $i$ is $f_i\left(x_{\mathcal{N}_{i}}\right)=\frac{\sum_{j \in \mathcal{N}_i}\lambda_jx_{ji}}{\mu_i-\sum_{j \in \mathcal{N }_i}\lambda_jx_{ji}}$ and the computing capacity constraint is $g_i\left(x_{\mathcal{N}_{i}}\right)=\sum_{j \in \mathcal{N}_i} \lambda_{j}x_{ji}-\mu_{i}$, where both $f_i$ and $g_i$ are determined by the tasks offloaded by both edge server $i$ itself and its neighbors, i.e., $x_{ji}$ $\forall j \in \mathcal{N}_i$. For each edge server, the sum of the task offloaded to itself and neighboring servers should be equal to the tasks it received from the exogenous sources, leading to linear equality constraints. Therefore, such a task offloading problem in \cite{wang2024distributed} is in the form of \eqref{eq:primalprob}.

\section{Algorithm Development}\label{sec:algdevelop}

This section develops a decentralized algorithm to solve problem \eqref{eq:primalprob}.

\subsection{Problem Transformation}\label{ssec:probtran}
We first transfer \eqref{eq:primalprob} into an equivalent form to facilitate our algorithm design.

By adding $n$ auxiliary variables $t_i\in\mathbb{R}^p$ $\forall i\in\mathcal{V}$, the inequality constraint $\sum_{i\in \mathcal{V}} g_i(x_{\mathcal{N}_{i}})\le \mathbf{0}_p$ can be converted to $g_i(x_{\mathcal{N}_{i}})\le t_i$ $\forall i\in\mathcal{V}$ and $\sum_{i\in \mathcal{V}} t_i=\mathbf{0}_p$. Let 
\begin{align}
		y_i&=[x_i^T, t_i^T]^T, \forall i\in \mathcal{V}, \displaybreak[0]\\
		\mathbf{y}&=[y_1^T, \ldots, y_n^T]^T\in \mathbb{R}^{N},
\end{align}
where $N=np+\sum_{i\in \mathcal{V}} d_i$. Then, problem~\eqref{eq:primalprob} can be equivalently transformed into 
\begin{equation}\label{eq:probleminy}
\begin{array}{cl}
	\underset{\mathbf{y}\in \mathbb{R}^{N}}{\operatorname{minimize}} ~& \mathbf{f}(\mathbf{y}):=\sum\limits_{i\in \mathcal{V}} f_i(x_{\mathcal{N}_{i} })\\
	\operatorname{subject~to} &\mathbf{G}(\mathbf{y}):= \left[\begin{array}{c}
		g_1(x_{\mathcal{N}_{1} })\!-\!t_1\\
		\vdots\\
		g_n(x_{\mathcal{N}_{n}})\!-\!t_n
	\end{array}\right]\! \le\! \mathbf{0}_{np},\\
	& (\mathbf{1}_n\otimes I_{m+p})^T(\mathbf{B}\mathbf{y}-\mathbf{c})=\mathbf{0}_{m+p},\\
	&\mathbf{y}\in Y,
\end{array}
\end{equation}
where $Y$ is the Cartesian product of $Y_i=\{[x_i^T, t_i^T]^T: x_i \in X_i, \ t_i \in \mathbb{R}^p\}$ $\forall i\in \mathcal{V}$, $\mathbf{B}=\operatorname{diag}(B_1,\ldots,B_n)$ with $B_i=\operatorname{diag}(\sum_{j \in \mathcal{N}_i}A_{ji}, I_p)$, and $\mathbf{c}=[c_1^T,\ldots,c_n^T]^T$ with $c_i=[b_i^T, \mathbf{0}_p^T]^T$. Here, we partition each $A_i$ into $A_{ij} \in \mathbb{R}^{m \times d_j}$ $\forall j \in \mathcal{N}_{i}$ with each $A_{ij}$ formed by $d_j$ columns of $A_i$.

Assumption~\ref{asm:primalprob} on problem~\eqref{eq:primalprob} leads to the following properties of the equivalent problem \eqref{eq:probleminy}.

\begin{proposition}\label{pro:propertyofproby}
Suppose Assumption~\ref{asm:primalprob} holds. Then, problem~\eqref{eq:probleminy} satisfies the following properties:
\begin{enumerate}[(a)]
	\item $\mathbf{f}(\mathbf{y})$ and $\mathbf{G}(\mathbf{y})$ are convex on the set $Y$.\label{pro:fygyconvex}
	\item $\nabla \mathbf{f}$ is Lipschitz continuous with Lipschitz constant $L_F:=L_f\max_{i\in \mathcal{V}}(|\mathcal{N}_{i}|)$ on the set $Y$.\label{pro:gardfsmooth}
	\item $\mathbf{G}(\mathbf{y})$ is Lipschitz continuous with Lipschitz constant  
	$\tilde{\beta}:=\sqrt{(1+\beta^2)(\max_{i\in \mathcal{V}} (|\mathcal{N}_i|))}$ on the set $Y$. \label{pro:Gycontinuous}
	\item There exists at least one optimum $\mathbf{y}^\star$ to problem~\eqref{eq:probleminy}. Any optimal solution $\mathbf{y}^\star = [(x_1^\star)^T, (t_1^\star)^T, \ldots, (x_n^\star)^T, (t_n^\star)^T]^T$ to problem~\eqref{eq:probleminy} satisfies that 
	$[(x_1^\star)^T, \ldots, (x_n^\star)^T]^T$ is an optimal solution to problem~\eqref{eq:primalprob}. \label{pro: equivsolguarteen}
	\item Strong duality holds between problem~\eqref{eq:probleminy} and its Lagrange dual problem. \label{pro:equivstrongdual} 
\end{enumerate}
\end{proposition}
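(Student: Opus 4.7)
The plan is to leverage the one-to-one correspondence between feasible points of problem~\eqref{eq:primalprob} and~\eqref{eq:probleminy} induced by the auxiliary variables $t_i$, and to carry out two neighbor-counting Lipschitz estimates for parts (b) and (c). For part (a), I would observe that $\mathbf{f}(\mathbf{y})$ depends on $\mathbf{y}$ only through its $x$-coordinates and equals the original objective, so its convexity on $Y$ is inherited from Assumption~\ref{asm:primalprob}(\ref{gisumficonvex}); each block $g_i(x_{\mathcal{N}_i})-t_i$ of $\mathbf{G}$ is the sum of the convex function $g_i$ and the affine function $-t_i$, hence convex on $Y$. For part (d), given an optimum $\mathbf{x}^\star$ of~\eqref{eq:primalprob}, I would set $t_i^\star := g_i(x_{\mathcal{N}_i}^\star) - \tfrac{1}{n}\sum_{k\in\mathcal{V}} g_k(x_{\mathcal{N}_k}^\star)$, which forces $\sum_i t_i^\star = \mathbf{0}_p$ and $g_i(x_{\mathcal{N}_i}^\star)-t_i^\star = \tfrac{1}{n}\sum_k g_k(x_{\mathcal{N}_k}^\star) \le \mathbf{0}_p$ by primal feasibility of $\mathbf{x}^\star$. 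A short swap-of-summation using $i\in\mathcal{N}_j \Leftrightarrow j\in\mathcal{N}_i$ on the undirected graph shows that the linear equality in~\eqref{eq:probleminy} reduces to the one in~\eqref{eq:primalprob}, so the resulting $\mathbf{y}^\star$ is feasible for~\eqref{eq:probleminy} with the same objective value, and is therefore optimal. For part (e), applying the same $t_i$-construction to the Slater point $\tilde x$ of Assumption~\ref{asm:primalprob}(\ref{slater}) produces $\tilde{\mathbf{y}}\in\operatorname{rel\;int}(Y)$ with $\mathbf{G}(\tilde{\mathbf{y}})<\mathbf{0}_{np}$ and the linear equality satisfied, so Slater's condition for the convex program~\eqref{eq:probleminy} holds and strong duality follows from standard convex duality.

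The main work lies in (b) and (c), where the factor $\max_{i\in\mathcal{V}}|\mathcal{N}_i|$ appears twice in (b) and once in (c). For (b), I would decompose $\nabla\mathbf{f}(\mathbf{y})-\nabla\mathbf{f}(\mathbf{y}')$ blockwise in each $x_j$, observing that only $\{f_i : i\in\mathcal{N}_j\}$ contribute. Applying the bound $\|\sum_{i\in\mathcal{N}_j} v_i\|^2 \le |\mathcal{N}_j|\sum_{i\in\mathcal{N}_j}\|v_i\|^2$, then swapping the order of summation and invoking the $L_f$-smoothness of each $f_i$ to reassemble the full gradients, and finally using the identity $\sum_i \|x_{\mathcal{N}_i}-x_{\mathcal{N}_i}'\|^2 = \sum_j |\mathcal{N}_j|\,\|x_j-x_j'\|^2$, produces two factors of $\max_i|\mathcal{N}_i|$ under the square root and yields $L_F = L_f\max_i|\mathcal{N}_i|$. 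For (c), I would use the two-term Cauchy--Schwarz inequality $(\beta a+b)^2 \le (\beta^2+1)(a^2+b^2)$ on each block $\|g_i(x_{\mathcal{N}_i})-g_i(x_{\mathcal{N}_i}')-(t_i-t_i')\|^2$ with $a=\|x_{\mathcal{N}_i}-x_{\mathcal{N}_i}'\|$ and $b=\|t_i-t_i'\|$, sum over $i$, and apply the same neighbor-counting identity on the $x$-part, while the $t$-part merges directly into $\|\mathbf{y}-\mathbf{y}'\|^2$; this recovers exactly $\tilde\beta = \sqrt{(1+\beta^2)\max_i|\mathcal{N}_i|}$.

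The only real obstacle is the book-keeping in (b): Cauchy--Schwarz must be invoked twice---once inside each $x_j$-block to pass from $\|\sum_i \nabla_{x_j} f_i\|^2$ to $|\mathcal{N}_j|\sum_i\|\nabla_{x_j}f_i\|^2$, and once implicitly through the swap of summation orders---and one must verify that the two resulting $|\mathcal{N}_j|$ factors collapse cleanly to $(\max_i|\mathcal{N}_i|)^2$ under the square root rather than to a larger combinatorial quantity. The same swap-of-summation step is what drives the sharpness of $\tilde\beta$ in (c). The reduction steps for (a), (d), and (e) are by contrast routine, with the only mild subtlety being the explicit verification that the simplex-like construction of $t_i^\star$ respects both the equality and inequality constraints of~\eqref{eq:probleminy} simultaneously.
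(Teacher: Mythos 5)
Your proposal is correct and follows essentially the same route as the paper's proof: convexity by inheritance for (a), the two neighbor-counting estimates $\|\sum_{i\in\mathcal{N}_j}v_i\|^2\le|\mathcal{N}_j|\sum_{i\in\mathcal{N}_j}\|v_i\|^2$ and $\sum_i\|x_{\mathcal{N}_i}-x_{\mathcal{N}_i}'\|^2\le\max_i|\mathcal{N}_i|\sum_j\|x_j-x_j'\|^2$ for (b), the Young/Cauchy--Schwarz split of the $g_i$- and $t_i$-parts for (c), and the centering construction $t_i^\star=g_i(x_{\mathcal{N}_i}^\star)-\tfrac{1}{n}\sum_j g_j(x_{\mathcal{N}_j}^\star)$ applied to an optimum and to the Slater point for (d) and (e). Your swap-of-summation verification of the equality constraint in (d) is a detail the paper leaves implicit, but the argument is the same.
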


\begin{proof}
See Appendix \ref{ssec:proofofproLipschitz}.
\end{proof}
Proposition~\ref{pro:propertyofproby}(\ref{pro: equivsolguarteen}) shows that problem~\eqref{eq:probleminy} and problem~\eqref{eq:primalprob} are equivalent, which allows us to obtain the optimal solution of problem~\eqref{eq:primalprob} by solving problem~\eqref{eq:probleminy}.

\subsection{Algorithm Design}
This subsection designs an algorithm for solving the equivalent problem~\eqref{eq:probleminy}.

Our proposed algorithm maintains four variables $\mathbf{y}^{k}\in \mathbb{R}^{N}$, $\mathbf{q}^{k}\in \mathbb{R}^{np}$, $\mathbf{u}^{k}\in \mathbb{R}^{n(m+p)}$, and $\mathbf{z}^{k}\in \mathbb{R}^{n(m+p)}$, where $\mathbf{y}^{k}$ is the primal variable, $\mathbf{q}^{k}$ and $\mathbf{u}^{k}$ are the dual variables associated with the inequality constraint and the equality constraint in \eqref{eq:probleminy}, respectively, and $\mathbf{z}^{k}$ is an auxiliary variable whose role will be explained shortly.

The algorithm parameters include two constant step-sizes $\gamma, \rho>0$ and two weight matrices $W, H$ that facilitate information fusion across the network and comply with the network topology. We let $W=P^W\otimes I_{m+p},~H=P^H\otimes I_{m+p}$, and require $P^W,P^H\in \mathbb{R}^{n\times n}$ meet the assumption below.
\begin{assumption}\label{asm:matricesphpw}
(a) $[P^W]_{ij}=[P^H]_{ij}=0$, $\forall i \in \mathcal{V}$, $\forall j\notin \mathcal{N}_i $; (b) $P^W$ and $P^H$ are symmetric and positive semidefinite; (c) $P^W\mathbf{1}_n=\mathbf{1}_n$, $\operatorname{Null}(P^H)=\operatorname{span}(\mathbf{1}_n)$; (d) $P^W+P^H \preceq I_n$.
\end{assumption}

Assumption~\ref{asm:matricesphpw}(a) is used to enable distributed implementation, while (b)–(d) are imposed to ensure convergence; see \cite{wu2022distributed,ShiW2015a,wu2019improved} for detailed discussions. There are many options for $P^W,P^H$ under Assumption~\ref{asm:matricesphpw}. For example, we may let $P^W\!=\!\frac{I+P'}{2}$ and $P^H\!=\!\frac{I-P'}{2}$ for some $P' \in \mathbb{R}^{n \times n}$ such that $[P']_{ij}=[P']_{ji}>0$ $ \forall \{i,j\}\in \mathcal{E}$, $\sum_{j\in \mathcal{N}_{i}}[P']_{ij}=1$, $[P']_{ii}>0$, and $[P']_{ij}=0 $ otherwise \cite{wu2022distributed}.

At initial iteration $k=0$, we arbitrarily select $\mathbf{y}^{0}\in Y$ and $\mathbf{u}^0\in\mathbb{R}^{n(m+p)}$. Then, we initialize $ \mathbf{z}^{0}=\rho H\mathbf{u}^{0}$ and $\mathbf{q}^{0} = \max\{-\mathbf{G}(\mathbf{y}^{0}),\mathbf{0}_{np}\}$. At each subsequent iteration $k+1$, all the variables are updated according to
\begin{align}
\mathbf{y}^{k+1}&=\mathcal{P}_{ Y}[\mathbf{y}^{k}-\gamma\cdot \mathbf{d}^{k}],\displaybreak[0]\label{eq:projofyk} \\
\mathbf{q}^{k+1}& = \max\{-\mathbf{G}(\mathbf{y}^{k+1}), \mathbf{q}^{k}\!+\!\mathbf{G}(\mathbf{y}^{k+1})\},\displaybreak[0]\label{eq:updateofqk}\\
\mathbf{u}^{k+1}&=W \mathbf{u}^{k}+\frac{1}{\rho}(\mathbf{B} \mathbf{y}^{k+1}-\mathbf{c}-\mathbf{z}^{k}),\displaybreak[0]\label{eq:updateofuk}\\
\mathbf{z}^{k+1} &=\mathbf{z}^{k}+\rho H\mathbf{u}^{k+1}.\label{eq:updateofzk}
\end{align}

In the primal update \eqref{eq:projofyk}, $\mathbf{d}^{k}$ is the gradient of the following augmented-Lagrangian-like function $\mathcal{R}^k(\mathbf{y})$ at $\mathbf{y}^k$:
\begin{align}
\mathcal{R}^k(\mathbf{y})=&\mathbf{f}(\mathbf{y})+\langle \mathbf{q}^{k}+\mathbf{G}(\mathbf{y}^{k}), \mathbf{G}(\mathbf{y})\rangle\nonumber\displaybreak[0]\\
&+\langle W\mathbf{u}^{k}-\frac{1}{\rho}\mathbf{z}^{k}, \mathbf{B}\mathbf{y}-\mathbf{c}\rangle+\frac{1}{2\rho}\|\mathbf{B}\mathbf{y}-\mathbf{c}\|^2,
%+\frac{\alpha}{2}\|\mathbf{y}-\mathbf{y}^{k}\|^2
\label{eq:defineofRt}
\end{align}
and the expression of $\mathbf{d}^{k}$ is thus given by
\begin{align}
\mathbf{d}^{k}=&\nabla \mathbf{f}(\mathbf{y}^{k}) +\mathbf{B}^T(W\mathbf{u}^{k}-\frac{1}{\rho}\mathbf{z}^{k})+\frac{1}{\rho}\mathbf{B}^{T}\mathbf{B}\mathbf{y}^{k}\nonumber\displaybreak[0]\\
&+(\frac{\partial\mathbf{G}(\mathbf{y}^{k})}{\partial  \mathbf{y}})^T(\mathbf{q}^{k}+\mathbf{G}(\mathbf{y}^{k}))-\frac{1}{\rho}\mathbf{B}^T\mathbf{c},\label{eq:jocabianofr} %这个好像应该是减号，不是加号
\end{align}
where $\frac{\partial\mathbf{G}(\mathbf{y}^{k})}{\partial \mathbf{y}}$ is the Jacobian matrix of $\mathbf{G}(\mathbf{y})$ at $\mathbf{y}^{k}$. Clearly, the primal update \eqref{eq:projofyk} is a gradient projection operation aiming at the descent of the augmented-Lagrangian-like function $\mathcal{R}^k(\mathbf{y})$ in \eqref{eq:defineofRt}.

The dual update \eqref{eq:updateofqk} adopts the idea of virtual queue in \cite{YuHao2017}, which was originally developed for addressing inequality-constrained convex problems. It can be viewed as a modification of the dual subgradient algorithm with an additional proximal term in the primal update and an approximate dual update that resembles a queuing equation. In the absence of the equality constraints in \eqref{eq:probleminy}, minimizing $\mathcal{R}^k(\mathbf{y})$ plus a proximal term $\frac{\gamma}{2}\|\mathbf{y}-\mathbf{y}^{k}\|^2$ corresponds to the primal update of the virtual queue algorithm in \cite{YuHao2017}, while \eqref{eq:updateofqk} matches its dual update. However, this primal update cannot be implemented in a distributed manner because of the variable coupling in the local functions. This motivates us to employ a different primal update \eqref{eq:projofyk} for distributed implementation, and retain the dual update \eqref{eq:updateofqk} for effectively handling the inequality constraints. 
 
To understand the rationale behind the remaining dual updates \eqref{eq:updateofuk} and \eqref{eq:updateofzk}, we consider the dual problem of \eqref{eq:probleminy} in absence of the inequality constraint $\mathbf{G}(\mathbf{y})\le\mathbf{0}$\footnote{The Lagrange dual problem is originally given by $\operatorname{maximize}_{u\in \mathbb{R}^{m+p}}\min_{\mathbf{y}\in\mathbb{R}^N} \mathbf{f}(\mathbf{y})+\langle u, (\mathbf{1}_n\otimes I_{m+p})^T(\mathbf{B}\mathbf{y}-\mathbf{c})\rangle$. Here we consider its equivalent form \eqref{eq:equalitydualproblem} for our algorithm design.}:
\begin{equation}\label{eq:equalitydualproblem}
\begin{split}
	\underset{\mathbf{u}\in \mathbb{R}^{n(m+p)}}{\operatorname{maximize}}~~&~ D_e(\mathbf{u}):=\min_{\mathbf{y}\in Y} \mathbf{f}(\mathbf{y})+\langle\mathbf{u},\mathbf{B}\mathbf{y}-\mathbf{c}\rangle\\
	\operatorname{subject~to} ~&~ ~u_1 = \ldots = u_n,
\end{split}
\end{equation}
where $\mathbf{u}=[u_1^T,\ldots,u_n^T]^T$.
We first make an attempt to solve the dual problem \eqref{eq:equalitydualproblem} by employing the distributed first-order method P-EXTRA \cite{ShiW2015a}. According to \cite{wu2019improved}, such a dual P-EXTRA algorithm gives
\begin{align}
\mathbf{u}^{k+1}=& \operatorname{\arg\;\min}_{\mathbf{u}\in \mathbb{R}^{n(m+p)}} -D_e(\mathbf{u})+\langle\mathbf{u}, \mathbf{z}^{k}\rangle\nonumber\displaybreak[0]\\
&+\frac{\rho}{2}\|\mathbf{u}-W\mathbf{u}^{k}\|^2,\label{eq:pextrauupdate}
\end{align}
where $\mathbf{z}^k$ takes the same recursive form as \eqref{eq:updateofzk} with $\mathbf{z}^0=\rho H\mathbf{u}^0$. Here, $\mathbf{z}^k$ is the dual variable associated with the consensus constraint $u_1 = \ldots = u_n$ in problem \eqref{eq:equalitydualproblem}. According to the update equation \eqref{eq:updateofzk} with $ \mathbf{z}^{0}=\rho H\mathbf{u}^{0}$, we have $\mathbf{z}^{k}=\rho \sum_{\ell=0}^{k}H\mathbf{u}^{\ell}$, which is the accumulation of the consensus error over iterations and facilitates the consensus of all the $u_i^\ell$'s. Note that the above updates are not implementable because in general the dual function $D_e(\mathbf{u})$ is inaccessible.

To overcome this issue, we propose the following primal-dual-primal approach: First, note that we may rewrite \eqref{eq:pextrauupdate} as
\begin{align*}
\mathbf{z}^k+\rho(\mathbf{u}^{k+1}-W\mathbf{u}^k)\in \partial D_e(\mathbf{u}^{k+1}).
\end{align*}
On the other hand, if we set
\begin{equation}\label{eq:yk1isminimizer}
\mathbf{y}^{k+1}\in \operatorname{\arg\;\min}_{\mathbf{y}\in Y} \mathbf{f}(\mathbf{y})+\langle \mathbf{u}^{k+1}, \mathbf{B}\mathbf{y}-\mathbf{c}\rangle,
\end{equation}
then $\mathbf{B}\mathbf{y}^{k+1}-\mathbf{c}$ is also a subgradient of $ D_e(\mathbf{u}^{k+1})$ like $\mathbf{z}^k+\rho(\mathbf{u}^{k+1}-W\mathbf{u}^k)$. Hence, we choose to equalize these two items, leading to the update equation of $\mathbf{u}^{k+1}$ which is exactly \eqref{eq:updateofuk}. Then, to derive a legitimate update equation for $\mathbf{y}^{k}$, note that \eqref{eq:yk1isminimizer} can be rewritten as $-\mathbf{B}^T\mathbf{u}^{k+1}\in \nabla \mathbf{f}(\mathbf{y}^{k+1})$, which, along with \eqref{eq:updateofuk}, results in $-\mathbf{B}^T(W\mathbf{u}^k+\frac{1}{\rho}(\mathbf{B}\mathbf{y}^{k+1}-\mathbf{c}-\mathbf{z}^{k}))\in \nabla \mathbf{f}(\mathbf{y}^{k+1})$. This is equivalent to 
\begin{align}
\mathbf{y}^{k+1}\in\operatorname{\arg\;\min}_{\mathbf{y}\in Y}\mathcal{R}^k(\mathbf{y}),\label{eq:yinargminR}
\end{align}
where $\mathcal{R}^k(\mathbf{y})$ is given in \eqref{eq:defineofRt} (with $\mathbf{G}(\mathbf{y})\equiv\mathbf{0}$). Therefore, \eqref{eq:yinargminR}, \eqref{eq:updateofuk}, and \eqref{eq:updateofzk} together constitute an equivalent form of the aforementioned dual P-EXTRA algorithm, i.e., \eqref{eq:pextrauupdate} and \eqref{eq:updateofzk}. Recall that \eqref{eq:projofyk} is a gradient projection operation for emulating \eqref{eq:yinargminR}. Consequently, for problem \eqref{eq:probleminy} without the inequality constraint, our proposed algorithm can be viewed as a dual first-order method that originates from dual P-EXTRA yet introduces the gradient projection operation to acquire readily available and computationally efficient estimates of dual subgradients.

\begin{remark}\label{rmk:comaprsionWucoupling}
Our proposed algorithm does not directly minimize $\mathcal{R}^k(\mathbf{y})$, i.e., adopt \eqref{eq:yinargminR} as the primal update. This is because $f_i$ and $g_i$ appearing in $\mathcal{R}^k(\mathbf{y})$ are functions of coupled variables, which disable the decentralized realization of \eqref{eq:yinargminR}. Similarly, the primal update of IPLUX in \cite{wu2022distributed} minimizes $\mathcal{R}^k(\mathbf{y})$ with $\mathbf{f}(\mathbf{y})$ substituted by its first-order approximation, which can only tackle problems without variable coupling and, thus, is unable to solve problem~\eqref{eq:primalprob} without any centralized coordination. In contrast, the gradient of $\mathcal{R}^k(\mathbf{y})$ has a decomposable structure, so that the gradient projection step \eqref{eq:projofyk} can be executed decentralizedly, which will be elaborated in Section~\ref{ssec:distributed}. Also, \eqref{eq:projofyk} can significantly reduce the computational complexity of \eqref{eq:yinargminR}. Nevertheless, by adopting \eqref{eq:projofyk} instead of \eqref{eq:yinargminR}, we indeed compromise on the accuracy in estimating dual subgradients, thereby intensifying the challenge of convergence analysis. 
\end{remark}

\subsection{Decentralized Implementation}\label{ssec:distributed}
Below we describe the decentralized implementation of the proposed algorithm \eqref{eq:projofyk}--\eqref{eq:updateofzk}.

Let each node $i \in \mathcal{V}$ maintain $y_i^k\in \mathbb{R}^{d_i+p}, q_i^k\in \mathbb{R}^{p},u_i^k \in \mathbb{R}^{m+p}$, and $z_i^k \in\mathbb{R}^{m+p}$, where $y_i^k, q_i^k,u_i^k, z_i^k$ are the $i$-th block of the variables $\mathbf{y}^k,\mathbf{q}^k,\mathbf{u}^k,\mathbf{z}^k$, respectively. We further partition $y_i^k=[(x_i^k)^T, (t_i^k)^T]^T$, where $x_i^k \in\mathbb{R}^{d_i}$ and $t_i^k \in \mathbb{R}^{p}$.

To obtain the update of $y_i^k$, let $\mathbf{d}^{k}=[(d_1^{k})^T, \ldots, (d_n^{k})^T]^T$, where $d_i^{k}=\nabla_{y_i}\mathcal{R}^k(\mathbf{y}^{k})\! \in \!\mathbb{R}^{d_i+p}$ $\forall i \in \mathcal{V}$ represents the gradient of $\mathcal{R}^k(\mathbf{y})$ with respect to $y_i^k$. The expression of $d_i^k$ $\forall i \in \mathcal{V}$ directly follows from \eqref{eq:jocabianofr} and needs to compute $\nabla_{y_i}\mathcal{R}^k(\mathbf{y}^{k})$ by utilizing only elementary matrix operations, which is given
\begin{align}
	d_i^k=B_i^T\Big(\!\sum\limits_{j \in \mathcal{N}_{i}}\! [P^W]_{i j} u_{j}^k-\frac{1}{\rho}z_i^k \Big)+[(\tilde{d}_{i1}^{k})^T, ~(\tilde{d}_{i2}^{k})^T]^T, \displaybreak[0]\label{eq:jacobianofyi} 
\end{align}
where
\begin{align}
\tilde{d}_{i1}^{k}&= \!\sum\limits_{j \in \mathcal{N}_{i}}\!\! \Big(\nabla_{x_i} f_j(x^k_{\mathcal{N}_{j} })+(\frac{\partial g_j(x^k_{\mathcal{N}_{j} })}{\partial x_i})^T(q_j^k+ g_j(x_{\mathcal{N}_{j} }^k)-t_j^k)\!\Big)\notag \\
&~~~~~+\frac{1}{\rho}(\sum\limits_{j \in \mathcal{N}_i}A_{ji})^T\!\sum\limits_{j \in \mathcal{N}_i}\!\!\!A_{ji}x_i^k-\frac{1}{\rho}(\sum\limits_{j \in \mathcal{N}_i}A_{ji})^Tb_i,\label{eq:expressionofdi1}\\
\begin{split}
	\tilde{d}_{i2}^{k}&=\frac{1}{\rho}t_i^k-(q_i^k+ g_i(x_{\mathcal{N}_{i} }^k)-t_i^k).\label{eq:expressionofdi2}
\end{split}
\end{align}
The above equations \eqref{eq:jacobianofyi}--\eqref{eq:expressionofdi2} indicate that $d_i^k$ can be obtained by information interaction between neighboring nodes.

By utilizing the structure of the projection operation, $\mathbf{y}^{k+1}$ in \eqref{eq:projofyk} is decomposed as the following local updates:
\begin{equation}\label{eq:updateofyi}
y_i^{k+1}=\mathcal{P}_{Y_i} [y_i^{k}- \gamma \cdot d_i^{k}] ~~\forall  i \!\in\!\mathcal{V}.
\end{equation}
Owing to the neighbor sparse structures of $W, H$ and the definition of $\mathbf{G}$ in \eqref{eq:probleminy}, the decentralized updates of $\mathbf{q}^{k+1}$, $\mathbf{u}^{k+1}$, and $\mathbf{z}^{k+1}$ in \eqref{eq:updateofqk}--\eqref{eq:updateofzk} can be easily rewritten as the following local operations: For each $k\ge 0$ and each $i \in \mathcal{V}$,  
\begin{align}
q_{i}^{k+1}&=\max\left\{t_i^{k+1}\!-g_i(x_{\mathcal{N}_{i} }^{k+1}\!), ~q_{i}^{k}\!+\!g_i(x_{\mathcal{N}_{i}}^{k+1}\!)\!-\!t_i^{k+1}\right\},\displaybreak[0]\label{eq:updateofQi}\\
u_{i}^{k+1}&=\sum_{j \in \mathcal{N}_{i} } [P^W]_{i j} u_{j}^{k}+\frac{1}{\rho}\left(B_{i} y_{i}^{k+1}-c_i-z_{i}^{k}\right),\displaybreak[0]\label{eq:updateofui}\\
z_{i}^{k+1}&=z_{i}^{k}+\rho \sum_{j \in \mathcal{N}_{i} }[P^H]_{i j} u_{j}^{k+1}.\label{eq:updateofzi}
\end{align}

Algorithm~\ref{alg:algorithm} describes the detailed implementation of \eqref{eq:updateofyi}--\eqref{eq:updateofzi} taken by all the nodes. Given arbitrary $y_i^0 \in Y_i$, $ u_i^0 \in \mathbb{R}^{p}$ $\forall i \in \mathcal{V}$, each node $i \!\in \! \mathcal{V}$ calculates $z_i^0, q_i^0$ by collecting $u_j^0, x_j^0$ from its every neighbor $ j \in \mathcal{N}_i$. Then, at each $k \ge 0$, each node $i \in\!\!\mathcal{V}$ updates its primal variable $y_i^{k+1}$ according to \eqref{eq:updateofyi}, which requires node $i$ to calculate $d_i^k$ first. According to \eqref{eq:jacobianofyi}--\eqref{eq:expressionofdi2},
each node $i$ needs to collect  $x_j^k$, $u_j^k$, $ A_{ji}$, and $ \nabla_{x_i} f_j(x^k_{\mathcal{N}_{j} })+(\frac{\partial g_j(x^k_{\mathcal{N}_{j} })}{\partial x_i})^T(q_j^k+ g_j(x_{\mathcal{N}_{j} }^k)-t_j^k)$ from every neighbor node $j$ for obtaining $\tilde{d}_{i1}^{k}, \tilde{d}_{i2}^{k}$ and then computes $d_i^k$ based on them.  
The updates of $q_i^{k+1}, u_i^{k+1}$ in \eqref{eq:updateofQi}, \eqref{eq:updateofui} can be realized by requesting  $x_j^{k+1}$ from neighbor $j$. In addition, each node $i\in\mathcal{V}$ computes $z_i^{k+1}$ according to \eqref{eq:updateofzi} by collecting $u_j^{k+1}$ from its neighbor $j$. Obviously, the updates \eqref{eq:updateofyi}--\eqref{eq:updateofzi} can be realized in a decentralized and inexpensive way.

{\renewcommand{\baselinestretch}{1.05}
\begin{algorithm} [t] 
	\caption{ Decentralized Projected Primal-Dual Algorithm}
	\label{alg:algorithm}
	\begin{algorithmic}[1]
		
		\STATE \textbf{Initialization}: Each node $i \in \mathcal{V}$ arbitrary selects  $u_i^{0}\in \mathbb{R}^{m+p}$ and $y_i^{0}=[(x_i^0)^T, (t_i^0)^T]^T \in Y_i$.

		\STATE  Each node $i \!\in \!\mathcal{V}$ sends $x_i^{0}$ and $A_{ij}$ to every neighbor $j $ and sets $q_i^{0}= \max\{t_i^0-g_i(x_{ \mathcal{N}_{i} }^0), \mathbf{0}_{p}\}$. 
		\STATE Each node $i \!\in \!\mathcal{V}$ computes $\nabla_{x_j}f_i(x^{0}_{\mathcal{N}_{i} })+(\frac{\partial g_i (x^{0}_{\mathcal{N}_{i} })}{\partial x_j})^T(q_i^0+g_i(x_{\mathcal{N}_{i} }^{0})\!-t_i^0 )$, sends it with $u_i^0$ to every neighbor $j $, and sets $z_i^{0}\!=\rho \sum_{j \in \mathcal{N}_{i} } [P^H]_{i j} u_{j}^{0} $.
		\FOR {$k=0,1,2,\ldots$, each node  $i \in \mathcal{V}$}
		
		\STATE Compute $d_{i}^{k}$ according to \eqref{eq:jacobianofyi}--\eqref{eq:expressionofdi2}; 
		
		\STATE Update $y_i^{k+1}$ according to \eqref{eq:updateofyi};
	\STATE   Send  $x_i^{k+1}$ to every neighbor $j $;
		\STATE  Update
		$q_{i}^{k+1}$ according to \eqref{eq:updateofQi}; 
		\STATE Update
		$u_{i}^{k+1}$ according to \eqref{eq:updateofui};
		\STATE  Compute $\nabla_{x_j}f_i(x^{k+1}_{\mathcal{N}_{i} })+(\frac{\partial g_i (x^{k+1}_{\mathcal{N}_{i} })}{\partial x_j})^T(q_i^{k+1}+g_i(x_{\mathcal{N}_{i} }^{k+1})\!-t_i^{k+1} )$ and sent it with  $u_i^{k+1} $ to every neighbor $j$;
		%\STATE Send  $u_i^{k+1}$ to every neighbor $j $;
		\STATE Update
		$z_{i}^{k+1}$ according to \eqref{eq:updateofzi}.
		\ENDFOR
	\end{algorithmic}
\end{algorithm}}
\textbf{Special case of no variable coupling:}
It is noteworthy that at each iteration, each node $i \in \mathcal{V}$ needs to communicate with its neighbors twice. Nevertheless, if each $f_i$, $g_i$ only depends on $x_i$ and each $A_i$ only involves $x_i$, i.e., no variable coupling exists, the calculation of $d_i^k$ in \eqref{eq:jacobianofyi} requires no information except $u_j^k$ from each neighbor $j$, which greatly simplifies the calculation of $d_i^k$. Moreover, in this case, updating $q_i^{k+1}$ in Line 8 of Algorithm~\ref{alg:algorithm} does not rely on $x_j^k$ received from neighbor $j$, and updating $u_i^{k+1}$ in Line 9 involves only $A_i$ and does not need $A_{ji}$. The above simplified operations require each node $i \in \mathcal{V}$ to send only $u_i^k$ to its every neighbor $j$, and the transmission of $x_i^0, A_{ij}$ in Line 2, $x_i^{k+1}$ in Line 7, and  $\nabla_{x_j}f_i(x^{k}_{\mathcal{N}_{i} })+(\frac{\partial g_i (x^{k}_{\mathcal{N}_{i} })}{\partial x_j})^T(q_i^{k}+g_i(x_{\mathcal{N}_{i} }^{k})\!-t_i^{k}) \forall k \ge 0$  in Line 3 and Line 10 of Algorithm~\ref{alg:algorithm} can be removed. Consequently, the communication and computational costs of Algorithm~\ref{alg:algorithm} can be significantly reduced. Additionally, in this case, all the nodes do not have to share their primal decisions, the gradient of local cost functions, the values of local constraint functions, and their Jacobian matrices, thereby reducing the exposure of critical local information.

\begin{remark}
Here, we discuss the differences and connections between Algorithm~\ref{alg:algorithm} and the existing methods applicable to various variable coupling \cite{todescato2020partition,pu2016quantization,HuJH2018,XiaoYY2017,wang2024distributed,Alghunaim2019,Mota2014}.
		By letting each node maintain local variables and the copies of its neighbors' variables, \cite{HuJH2018} transforms its locally-coupled problem into an equality-constrained problem and proposes several algorithms based on operator splitting techniques to solve this equivalent problem. A similar strategy is adopted in \cite{XiaoYY2017} to address convex feasibility problems. The methods in \cite{Mota2014, Alghunaim2019} are able to tackle local functions involving non-neighbors' variables (yet they cannot address globally-coupled constraints). In the special case where the local functions only depend on neighboring variables, they propose different cluster partition approaches, leading to the extended ADMM in \cite{Mota2014} and the coupled diffusion strategy in \cite{Alghunaim2019}, where each node also needs to update its local variable and the copies of its neighbors' variables. Clearly, the methods in \cite{HuJH2018, XiaoYY2017, Alghunaim2019, Mota2014}, incur higher memory and communication costs compared to Algorithm~\ref{alg:algorithm}, especially for densely connected networks. 
		
	Like Algorithm~\ref{alg:algorithm}, the methods in \cite{todescato2020partition,pu2016quantization,wang2024distributed} allow each node to maintain its own decision variables only. Notably, in the absence of constraints or with only local set constraints, Algorithm~\ref{alg:algorithm} is equivalent to the synchronous gradient-based method in \cite{todescato2020partition} and the unquantized version of the algorithm in \cite{pu2016quantization}. Furthermore, by setting $W=I$, $\mathbf{z}^k=\mathbf{0}$, and $\mathbf{y}^k=\mathbf{x}^k$, our algorithm reduces to the method in \cite{wang2024distributed} which only considers variable coupling under locally-coupled constraints. In contrast with \cite{todescato2020partition,pu2016quantization,HuJH2018,XiaoYY2017,wang2024distributed,Alghunaim2019,Mota2014}, our algorithm is able to simultaneously handle globally-coupled nonlinear inequality and linear equality constraints, making it applicable to a broader class of problems.
\end{remark}

\section{Convergence Analysis}\label{sec:convanal}

This section is dedicated to analyzing the convergence performance of Algorithm~\ref{alg:algorithm}, along with a comparison to related works and a discussion on the parameter selections for guaranteeing the theoretical results.

\subsection{Convergence Results} 
To establish the convergence analysis, we define $\mathbf{z}^\star=\mathbf{B}\mathbf{y}^{\star}\!-\mathbf{c}$ and let $(\bm{\lambda}^\star, u^\star)$ be an optimal solution of the Lagrange dual problem of problem~\eqref{eq:probleminy}, where $\bm{\lambda}^\star \ge \mathbf{0}_{np}$. Also, let $\mathbf{u}^\star=\mathbf{1}_n\otimes u^\star$. We keep track of the running average $\bar{\mathbf{y}}^k:=\frac{1}{k}\sum\limits_{\ell=1}^k \mathbf{y}^{\ell}$ $\forall k \ge 1$ to establish the convergence analysis. The following two theorems state the convergence results of Algorithm~\ref{alg:algorithm}.

\begin{theorem} \label{theo:feasibility}
Suppose Assumptions~\ref{asm:primalprob}--\ref{asm:matricesphpw} hold. Let $\gamma, \rho>0$ be such that $C:=\frac{1}{2\rho}\|\mathbf{z}^{0}-\mathbf{z}^\star\|_{H^\dag}^2+\frac{\rho}{2}(\|\mathbf{u}^{0}\|_W+\|\mathbf{u}^\star\|_W)^2+\frac{1}{2}\|\mathbf{G}(\mathbf{y}^0)\|^2+\frac{1}{2}\|\mathbf{G}(\mathbf{y}^\star)\|^2+\frac{1}{2}\|\mathbf{q}^{0}\|^2+\|\bm{\lambda}^\star\|^2+\frac{1}{2}(\mathbf{y}^{0}-\mathbf{y}^\star)^T(\frac{I}{\gamma}-\frac{\mathbf{B^T}\mathbf{B}}{\rho})(\mathbf{y}^{0}-\mathbf{y}^\star)\ge0$. Also suppose
\begin{align}
	\frac{1}{\gamma} \ge\frac{1}{\rho}\|\mathbf{B}^{T}\mathbf{B}\|+\tilde{\beta}^2+L_F+4\sqrt{np}(\|\bm{\lambda}^\star\|+\sqrt{C})L_g,\label{eq:parametergamma}
\end{align}
where $L_F$, $\tilde{\beta}$, and $L_g$ are given in Proposition~\ref{pro:propertyofproby}(\ref{pro:gardfsmooth})(\ref{pro:Gycontinuous}) and Assumption~\ref{asm:primalprob}(\ref{gijsmooth}), respectively. Then,
\begin{align}
	&\mathbf{G}(\bar{\mathbf{y}}^k)\le \frac{2(\|\bm{\lambda}^\star\|+\sqrt{C})}{k}\mathbf{1}_{np},\quad\forall k\ge 1,\label{eq:Gybarkupperbound}\displaybreak[0]\\
	&\|(\mathbf{1}_{n}\otimes I_{m+p})^T(\mathbf{B}\bar{\mathbf{y}}^k-\mathbf{c})\|\le \frac{D^0}{k},\quad\forall k\ge1.\label{eq:Bybarcupperbound}
\end{align}
where $D^0=\rho\sqrt{n}(\|\mathbf{u}^{0}\|_W+\|\mathbf{u}^\star\|_W+\sqrt{2C/\rho})$.
\end{theorem}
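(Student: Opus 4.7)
The plan is to construct a nonnegative Lyapunov quantity $V^k$ built from
\[
\tfrac{1}{2\rho}\|\mathbf{z}^k-\mathbf{z}^\star\|_{H^\dagger}^2,\quad \tfrac{\rho}{2}\|\mathbf{u}^k-\mathbf{u}^\star\|_W^2,\quad \tfrac{1}{2}\|\mathbf{q}^k-\bm{\lambda}^\star\|^2,\quad \tfrac{1}{2}\|\mathbf{G}(\mathbf{y}^k)\|^2,
\]
together with the primal quadratic $\tfrac{1}{2}(\mathbf{y}^k-\mathbf{y}^\star)^T(\tfrac{1}{\gamma}I-\tfrac{1}{\rho}\mathbf{B}^T\mathbf{B})(\mathbf{y}^k-\mathbf{y}^\star)$, the last being nonnegative because the step-size rule \eqref{eq:parametergamma} in particular forces $\tfrac{1}{\gamma}I\succeq\tfrac{1}{\rho}\mathbf{B}^T\mathbf{B}$. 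The coefficients in $V^k$ are chosen so that $V^0\le C$ and, more importantly, so that the one-step descent $V^{k+1}\le V^k$ holds. Granted these two properties, $V^k\le C$ for every $k\ge 0$, and the two constraint-violation estimates of the theorem follow by essentially independent arguments outlined next.

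For \eqref{eq:Gybarkupperbound}: expanding the $\max$ in \eqref{eq:updateofqk} gives $\mathbf{q}^{k+1}\ge\mathbf{q}^k+\mathbf{G}(\mathbf{y}^{k+1})$ componentwise; telescoping together with $\mathbf{q}^0\ge\mathbf{0}$ (by initialization) and convexity of $\mathbf{G}$ on $Y$ (Proposition~\ref{pro:propertyofproby}(\ref{pro:fygyconvex})) yields $k\,\mathbf{G}(\bar{\mathbf{y}}^k)\le\mathbf{q}^k$, after which $\|\mathbf{q}^k\|\le\|\bm{\lambda}^\star\|+\sqrt{2V^k}\le\|\bm{\lambda}^\star\|+\sqrt{2C}<2(\|\bm{\lambda}^\star\|+\sqrt{C})$ delivers the claim. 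For \eqref{eq:Bybarcupperbound}: premultiplying \eqref{eq:updateofuk} by $(\mathbf{1}_n\otimes I_{m+p})^T$ and using $P^W\mathbf{1}_n=\mathbf{1}_n$ together with the invariance $(\mathbf{1}_n\otimes I_{m+p})^T\mathbf{z}^k\equiv(\mathbf{1}_n\otimes I_{m+p})^T\mathbf{z}^0=\mathbf{0}$ (which follows from $P^H\mathbf{1}_n=\mathbf{0}$ and $\mathbf{z}^0=\rho H\mathbf{u}^0$) gives $(\mathbf{1}_n\otimes I_{m+p})^T(\mathbf{B}\mathbf{y}^{k+1}-\mathbf{c})=\rho(\mathbf{1}_n\otimes I_{m+p})^T(\mathbf{u}^{k+1}-\mathbf{u}^k)$. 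Averaging telescopes the right-hand side, and the operator bound $\mathbf{1}_n\mathbf{1}_n^T\preceq nP^W$ (valid because $P^W\succeq 0$ with $P^W\mathbf{1}_n=\mathbf{1}_n$) yields $\|(\mathbf{1}_n\otimes I_{m+p})^T(\mathbf{B}\bar{\mathbf{y}}^k-\mathbf{c})\|\le\tfrac{\rho\sqrt{n}}{k}\|\mathbf{u}^k-\mathbf{u}^0\|_W$; a triangle inequality in the $W$-seminorm together with $\|\mathbf{u}^k-\mathbf{u}^\star\|_W\le\sqrt{2V^k/\rho}\le\sqrt{2C/\rho}$ then reproduces exactly $D^0/k$.

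The hard step is the Lyapunov descent $V^{k+1}\le V^k$, which I would obtain by summing three one-step inequalities with the weights dictated by $V^k$ so that cross-terms cancel. The primal inequality starts from the projection optimality $\langle \mathbf{d}^k+\tfrac{1}{\gamma}(\mathbf{y}^{k+1}-\mathbf{y}^k),\mathbf{y}^\star-\mathbf{y}^{k+1}\rangle\ge 0$ implied by \eqref{eq:projofyk} and converts it, via the usual three-point identity, into a descent for the primal quadratic; passing from the linearization $\mathbf{d}^k$ back to $\mathcal{R}^k(\mathbf{y}^{k+1})-\mathcal{R}^k(\mathbf{y}^\star)$ invokes the $L_F$-smoothness of $\mathbf{f}$, the $\tilde\beta$-Lipschitzness of $\mathbf{G}$, and the $L_g$-smoothness of each $g_{ij}$ (Proposition~\ref{pro:propertyofproby}(\ref{pro:gardfsmooth})(\ref{pro:Gycontinuous}) and Assumption~\ref{asm:primalprob}(\ref{gijsmooth})). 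The error contributed by the term $\langle \mathbf{q}^k+\mathbf{G}(\mathbf{y}^k),\mathbf{G}(\cdot)\rangle$ carries a coefficient of size $\sqrt{np}\,L_g\,\|\mathbf{q}^k+\mathbf{G}(\mathbf{y}^k)\|$ after an element-wise Cauchy--Schwarz, which under the induction hypothesis $V^k\le C$ is bounded by $4\sqrt{np}(\|\bm{\lambda}^\star\|+\sqrt{C})L_g$---precisely the last summand of \eqref{eq:parametergamma}. The virtual-queue inequality exploits $\|\mathbf{q}^{k+1}-\bm{\lambda}^\star\|^2\le\|\mathbf{q}^k+\mathbf{G}(\mathbf{y}^{k+1})-\bm{\lambda}^\star\|^2+\|\mathbf{G}(\mathbf{y}^{k+1})\|^2$ combined with complementary slackness $\langle\bm{\lambda}^\star,\mathbf{G}(\mathbf{y}^\star)\rangle=0$, producing the cross-term that cancels the inequality-constraint part of the primal inequality. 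The $(\mathbf{u},\mathbf{z})$-inequality is a dual P-EXTRA-type telescoping identity in the spirit of \cite{wu2019improved}, which both supplies the $\mathbf{u},\mathbf{z}$ contributions to $V^{k+1}$ and cancels the equality-constraint part of the primal inequality through the cross-term $\langle\mathbf{u}^\star,\mathbf{B}(\mathbf{y}^{k+1}-\mathbf{y}^\star)\rangle$. The delicate piece is the primal step, because the gradient-projection $\mathbf{d}^k$ is merely a linear surrogate for minimizing the non-separable $\mathcal{R}^k$; this is what turns the argument into a bootstrap induction whose step closes precisely because \eqref{eq:parametergamma} embeds the Lyapunov constant $C$ inside the requirement on $1/\gamma$.
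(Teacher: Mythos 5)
Your proposal captures the same overall architecture as the paper's proof: the two constraint bounds are reduced, exactly as in the paper's Lemma~\ref{lemma:GybarBybarboundk}, to $k\,\mathbf{G}(\bar{\mathbf{y}}^k)\le\mathbf{q}^k$ (queue telescoping plus convexity) and $(\mathbf{1}_n\otimes I_{m+p})^T(\mathbf{B}\bar{\mathbf{y}}^k-\mathbf{c})=\tfrac{\rho}{k}(\mathbf{1}_n\otimes I_{m+p})^T(\mathbf{u}^k-\mathbf{u}^0)$ together with $\mathbf{1}_n\mathbf{1}_n^T\preceq nP^W$; and the heart of the matter is, as you say, a bootstrap induction in which the hypothesis $\|\mathbf{q}^k\|\le 2(\|\bm{\lambda}^\star\|+\sqrt{C})$ controls the Lipschitz constant $L_{\mathcal{R}^k}$ of $\nabla\mathcal{R}^k$ (which depends on $\|\mathbf{q}^k+\mathbf{G}(\mathbf{y}^k)\|$), and \eqref{eq:parametergamma} is calibrated precisely so that the induction closes. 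Where you diverge is in the packaging of the uniform energy bound. The paper does \emph{not} prove one-step monotonicity of any Lyapunov function: its energy $S^k$ is centered at $\mathbf{y}^\star,\mathbf{z}^\star$ but \emph{not} at the dual optima, satisfies only $\mathbf{f}(\mathbf{y}^{k+1})-\mathbf{f}(\mathbf{y}^\star)\le S^k-S^{k+1}$ (so $S^k$ can grow when the infeasible iterate has $\mathbf{f}(\mathbf{y}^{k+1})<\mathbf{f}(\mathbf{y}^\star)$), and the uniform bound comes from pairing the telescoped sum with the strong-duality lower bound $\sum_{k=1}^K(\mathbf{f}(\mathbf{y}^\star)-\mathbf{f}(\mathbf{y}^k))\le\langle\bm{\lambda}^\star,\mathbf{q}^K\rangle+\rho\langle\mathbf{u}^\star,\mathbf{u}^K-\mathbf{u}^0\rangle$ and then completing the square, yielding \eqref{eq:constraintboundforK}. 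Your optimum-centered $V^k$ absorbs those same cross-terms into the energy up front so that the nonnegative Lagrangian gap can be dropped per step; this is a legitimate and essentially equivalent reorganization, and your resulting bounds $\|\bm{\lambda}^\star\|+\sqrt{2C}\le 2(\|\bm{\lambda}^\star\|+\sqrt{C})$ and $\|\mathbf{u}^k-\mathbf{u}^0\|_W\le\|\mathbf{u}^0\|_W+\|\mathbf{u}^\star\|_W+\sqrt{2C/\rho}$ do reproduce the theorem's constants.

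Two cautions on the part you left unexecuted. First, the sign of the $\|\mathbf{G}(\mathbf{y}^k)\|^2$ term: in the paper's energy it enters with a \emph{negative} sign, because the virtual-queue estimate delivers $\tfrac12(\|\mathbf{q}^k\|^2-\|\mathbf{q}^{k+1}\|^2)+\tfrac12(\|\mathbf{G}(\mathbf{y}^{k+1})\|^2-\|\mathbf{G}(\mathbf{y}^k)\|^2)$ on the majorizing side; with $+\tfrac12\|\mathbf{G}(\mathbf{y}^k)\|^2$ in $V^k$ this difference telescopes the wrong way, and you would need either to flip that sign (then re-establish nonnegativity of $V^k$ via $\|\mathbf{q}^{k+1}\|\ge\|\mathbf{G}(\mathbf{y}^{k+1})\|$, as the paper does via Lemma~\ref{lemma:qkproperties}) or to absorb the residual using the $\tilde\beta^2$-term and the step-size margin. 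Second, the constant $C$ in the theorem statement is reverse-engineered from the paper's specific completed-square step (hence the asymmetric terms $\tfrac{\rho}{2}(\|\mathbf{u}^0\|_W+\|\mathbf{u}^\star\|_W)^2$, $\|\bm{\lambda}^\star\|^2$, and $\tfrac12\|\mathbf{G}(\mathbf{y}^\star)\|^2$); your claim $V^0\le C$ and the exact cancellation of cross-terms in the descent step are asserted rather than checked, and this is where all the remaining work lives.
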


\begin{proof}
See Appendix~\ref{ssec:proofoftheorem1}.
\end{proof}

The inequality \eqref{eq:parametergamma} specifies the conditions on the step-size $\gamma$ and the parameter $\rho$ required for Algorithm~1 to achieve the convergence rates in Theorem~\ref{theo:feasibility} and Theorem~\ref{theo:funcval}. The existence of positive $\gamma$ and~$\rho$ satisfying~\eqref{eq:parametergamma} will be justified in Section~\ref{ssec:parameter}. Additionally, although those theoretical parameter conditions involve the unavailable primal and dual optima, Section~\ref{ssec:parameter} demonstrates that they can be satisfied based on accessible information only.

\begin{theorem} \label{theo:funcval}
Under all the conditions in Theorem~\ref{theo:feasibility},
\begin{align}
	-\frac{C^0}{k}\le \mathbf{f}(\bar{\mathbf{y}}^k)\!-\!\mathbf{f}(\mathbf{y}^\star)\le\frac{S^{0}}{k},\quad\forall k\ge 1,\label{eq:theofuncvalupperbound} 
\end{align}
where
\begin{align*}
	C^0=&2(\|\bm{\lambda}^\star\|+\sqrt{C})\mathbf{1}_{np}^T\bm{\lambda}^\star\displaybreak[0]\\
	&+\rho\|\mathbf{u}^\star\|(\|\mathbf{u}^{0}\|_W+\|\mathbf{u}^\star\|_W+\sqrt{2C/\rho}),\displaybreak[0]\\
	S^0=&\frac{1}{2\rho}\|\mathbf{z}^{0}-\mathbf{z}^\star\|_{H^\dag}^2+\frac{\rho}{2}\|\mathbf{u}^{0}\|_W^2+\frac{1}{2}\|\mathbf{y}^{0}-\mathbf{y}^\star\|^2_{\frac{I}{\gamma}-\frac{\mathbf{B^T}\mathbf{B}}{\rho}}\displaybreak[0]\\
	&+\frac{1}{2}\|\mathbf{q}^{0}\|^2-\frac{1}{2}\|\mathbf{G}(\mathbf{y}^{0})\|^2.
\end{align*}	
\end{theorem}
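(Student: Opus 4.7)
The plan is to treat the two inequalities separately: the lower bound will follow from strong duality (Proposition~\ref{pro:propertyofproby}(\ref{pro:equivstrongdual})) together with the constraint-violation bounds already proved in Theorem~\ref{theo:feasibility}, while the upper bound will be extracted from the same per-iteration descent/Lyapunov inequality that drives the proof of Theorem~\ref{theo:feasibility}, combined with Jensen's inequality applied to the convex function $\mathbf{f}$ at the running average $\bar{\mathbf{y}}^k$.

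For the lower bound, I would first note that by Proposition~\ref{pro:propertyofproby}(\ref{pro:equivstrongdual}), $(\mathbf{y}^\star,\bm{\lambda}^\star,u^\star)$ is a saddle point of the Lagrangian of \eqref{eq:probleminy}, so for every $\mathbf{y}\in Y$,
\begin{equation*}
\mathbf{f}(\mathbf{y}^\star)\le \mathbf{f}(\mathbf{y})+\langle\bm{\lambda}^\star,\mathbf{G}(\mathbf{y})\rangle+\langle u^\star,(\mathbf{1}_n\otimes I_{m+p})^T(\mathbf{B}\mathbf{y}-\mathbf{c})\rangle.
\end{equation*}
Evaluating this at $\mathbf{y}=\bar{\mathbf{y}}^k\in Y$ (which holds because $Y$ is convex and each $\mathbf{y}^\ell\in Y$) yields
\begin{equation*}
\mathbf{f}(\bar{\mathbf{y}}^k)-\mathbf{f}(\mathbf{y}^\star)\ge -\langle\bm{\lambda}^\star,\mathbf{G}(\bar{\mathbf{y}}^k)\rangle-\langle u^\star,(\mathbf{1}_n\otimes I_{m+p})^T(\mathbf{B}\bar{\mathbf{y}}^k-\mathbf{c})\rangle.
\end{equation*}
For the first term, the elementwise bound \eqref{eq:Gybarkupperbound} together with $\bm{\lambda}^\star\ge\mathbf{0}$ gives $\langle\bm{\lambda}^\star,\mathbf{G}(\bar{\mathbf{y}}^k)\rangle\le 2(\|\bm{\lambda}^\star\|+\sqrt{C})\mathbf{1}_{np}^T\bm{\lambda}^\star/k$. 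For the second, Cauchy--Schwarz and \eqref{eq:Bybarcupperbound} give an upper bound $\|u^\star\|D^0/k$; since $\mathbf{u}^\star=\mathbf{1}_n\otimes u^\star$, $\|u^\star\|=\|\mathbf{u}^\star\|/\sqrt{n}$ so that $\|u^\star\|D^0=\rho\|\mathbf{u}^\star\|(\|\mathbf{u}^0\|_W+\|\mathbf{u}^\star\|_W+\sqrt{2C/\rho})$. Adding the two pieces recovers exactly $-C^0/k$.

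For the upper bound, the strategy is to revisit the Lyapunov descent inequality that the proof of Theorem~\ref{theo:feasibility} already establishes, but now track the primal gap $\mathbf{f}(\mathbf{y}^{\ell+1})-\mathbf{f}(\mathbf{y}^\star)$ rather than the constraint residual. Concretely, I expect the same machinery (convexity of $\mathbf{f}$ and $\mathbf{G}$, the $L_F$-smoothness of $\mathbf{f}$ and $L_g$-smoothness of $\mathbf{G}$ via the three-point identity for the projection \eqref{eq:projofyk}, together with the virtual-queue identity implicit in \eqref{eq:updateofqk} and the dual-P-EXTRA structure in \eqref{eq:updateofuk}--\eqref{eq:updateofzk}) yields a one-step inequality of the form
\begin{equation*}
\mathbf{f}(\mathbf{y}^{\ell+1})-\mathbf{f}(\mathbf{y}^\star)\le V^{\ell}-V^{\ell+1}+\text{(nonpositive coupling terms)},
\end{equation*}
where $V^{\ell}$ contains the $\frac{1}{2\rho}\|\mathbf{z}^{\ell}-\mathbf{z}^\star\|_{H^\dag}^2$, $\frac{\rho}{2}\|\mathbf{u}^{\ell}\|_W^2$, $\frac{1}{2}\|\mathbf{y}^{\ell}-\mathbf{y}^\star\|^2_{I/\gamma-\mathbf{B}^T\mathbf{B}/\rho}$, $\frac{1}{2}\|\mathbf{q}^{\ell}\|^2$ and $-\frac{1}{2}\|\mathbf{G}(\mathbf{y}^{\ell})\|^2$ pieces that exactly match $S^0$ at $\ell=0$. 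Telescoping from $\ell=0$ to $k-1$ and dropping $-V^k$ (which the parameter condition \eqref{eq:parametergamma} will make nonpositive when bundled appropriately) gives $\sum_{\ell=1}^{k}(\mathbf{f}(\mathbf{y}^{\ell})-\mathbf{f}(\mathbf{y}^\star))\le S^0$. Dividing by $k$ and applying Jensen's inequality $\mathbf{f}(\bar{\mathbf{y}}^k)\le\frac{1}{k}\sum_{\ell=1}^{k}\mathbf{f}(\mathbf{y}^\ell)$, which holds since $\mathbf{f}$ is convex on $Y$ by Proposition~\ref{pro:propertyofproby}(\ref{pro:fygyconvex}), finishes the upper bound.

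The main obstacle is the upper-bound half: extracting the primal gap inequality cleanly in the presence of both the gradient projection substitute for \eqref{eq:yinargminR} (which introduces a linearization error of order $L_F\|\mathbf{y}^{\ell+1}-\mathbf{y}^\ell\|^2$) and the non-separable constraint function $\mathbf{G}$ linearized through $\langle\mathbf{q}^\ell+\mathbf{G}(\mathbf{y}^\ell),\mathbf{G}(\mathbf{y})\rangle$ in $\mathcal{R}^\ell(\mathbf{y})$. Controlling these error terms requires exactly the step-size condition \eqref{eq:parametergamma}: the $L_F$ and $\tilde{\beta}^2$ contributions are absorbed by $1/\gamma$, while the $4\sqrt{np}(\|\bm{\lambda}^\star\|+\sqrt{C})L_g$ term absorbs the second-order correction from linearizing $\mathbf{G}$, with $\sqrt{C}$ entering because the iterates $\mathbf{q}^\ell$ stay bounded by $\|\bm{\lambda}^\star\|+\sqrt{C}$ (a by-product of the Theorem~\ref{theo:feasibility} analysis). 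Once that bookkeeping is in place, matching the residual Lyapunov terms with the explicit form of $S^0$ is a routine but delicate calculation.
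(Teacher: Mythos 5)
Your proposal is correct and follows essentially the same route as the paper: the lower bound via the saddle-point inequality evaluated at $\bar{\mathbf{y}}^k$ combined with \eqref{eq:Gybarkupperbound}--\eqref{eq:Bybarcupperbound}, and the upper bound by telescoping the per-iteration Lyapunov inequality (the paper's Lemma~\ref{lemma:boundedsumF}), discarding $-S^k$ using $\frac{I}{\gamma}-\frac{\mathbf{B}^T\mathbf{B}}{\rho}\succeq\mathbf{O}$ together with $\|\mathbf{q}^k\|\ge\|\mathbf{G}(\mathbf{y}^k)\|$, and applying Jensen's inequality. The only nuance worth noting is that the nonnegativity of the discarded term rests on the virtual-queue property \eqref{eq:qkplus1normlargethang} as much as on the step-size condition, which your "bundled appropriately" remark leaves implicit.
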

\begin{proof}
See Appendix \ref{proofoftheorem2}.
\end{proof}

Theorem~\ref{theo:feasibility} and Theorem~\ref{theo:funcval} show that the proposed algorithm achieves $O(1/k)$ convergence rates for both constraint violations and objective function value to problem~\eqref{eq:probleminy}. Due to the equivalence between problem~\eqref{eq:primalprob} and problem~\eqref{eq:probleminy}, the above results also indicate $O(1/k)$ convergence of constraint violations and objective function value to problem~\eqref{eq:primalprob}, which is described in Corollary~\ref{theo:originalprobconv}. In Corollary~\ref{theo:originalprobconv}, $\bar{x}_{\mathcal{N}_{i}}^{k}$ $\forall k\ge1$ is the vector obtained by stacking $\bar{x}_j^k:=\frac{1}{k}\sum_{\ell=1}^kx_j^\ell$ $\forall j\in\mathcal{N}_{i}$.

\begin{corollary}\label{theo:originalprobconv}
Suppose all the conditions in Theorem~\ref{theo:feasibility} hold. For each $k\ge1$,
\begin{align}
	&\sum_{i\in \mathcal{V}} g_i(\bar{x}_{\mathcal{N}_{i}}^{k})\le \frac{2n(\|\bm{\lambda}^\star\|+\sqrt{C})+D^0}{k}\mathbf{1}_p,\label{eq:globineqconv}\displaybreak[0]\\
	&\|{\sum_{i \in \mathcal{V}}} (A_i\bar{x}_{\mathcal{N}_{i}}^k-b_i)\| \le\frac{D^0}{k},\label{eq:priequalitybound}\displaybreak[0]\\
	& -\frac{C^0}{k} \le\Bigl(\sum_{i\in \mathcal{V}} f_i(\bar{x}_{\mathcal{N}_{i}}^k)\Bigr)-f^\star\le\frac{S^{0}}{k},\label{eq:funcvalupperbound}
\end{align}
where $C$, $D^0$, $C^0$, and $S^0$ are defined in Theorems~\ref{theo:feasibility} and~\ref{theo:funcval}, and $f^\star$ is the optimal value of problem~\eqref{eq:primalprob}.
\end{corollary}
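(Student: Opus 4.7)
The plan is to translate the bounds from Theorems~\ref{theo:feasibility} and~\ref{theo:funcval}, which concern the lifted problem~\eqref{eq:probleminy}, back to the original problem~\eqref{eq:primalprob} using the explicit form of the reformulation. Recall that $y_i=[x_i^T,t_i^T]^T$, $B_i=\operatorname{diag}(\sum_{j\in\mathcal{N}_i}A_{ji},\,I_p)$, and $c_i=[b_i^T,\mathbf{0}_p^T]^T$; since averaging commutes with this block structure, $\bar{y}_i^k=[(\bar{x}_i^k)^T,(\bar{t}_i^k)^T]^T$ with $\bar{t}_i^k:=\tfrac{1}{k}\sum_{\ell=1}^k t_i^\ell$, so the $\bar{x}_i^k$ that appear in the corollary are exactly the $x$-components of the averaged lifted iterate.

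Next, I would unpack the equality-constraint bound~\eqref{eq:Bybarcupperbound}. The $(m+p)$-vector $\sum_{i\in\mathcal{V}}(B_i\bar{y}_i^k-c_i)$ splits into an $m$-dimensional top block $\sum_{i\in\mathcal{V}}\big((\sum_{j\in\mathcal{N}_i}A_{ji})\bar{x}_i^k-b_i\big)$ and a $p$-dimensional bottom block $\sum_{i\in\mathcal{V}}\bar{t}_i^k$. A standard index swap that exploits the undirected-graph symmetry $j\in\mathcal{N}_i\Leftrightarrow i\in\mathcal{N}_j$ together with the column partition $A_i=[A_{ij}]_{j\in\mathcal{N}_i}$ shows $\sum_{i}(\sum_{j\in\mathcal{N}_i}A_{ji})\bar{x}_i^k=\sum_{i}A_i\bar{x}_{\mathcal{N}_i}^k$, so the top block simplifies to $\sum_{i\in\mathcal{V}}(A_i\bar{x}_{\mathcal{N}_i}^k-b_i)$. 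Since the Euclidean norm of any sub-vector is bounded by the norm of the full vector, \eqref{eq:Bybarcupperbound} directly yields \eqref{eq:priequalitybound} together with the auxiliary estimate $\|\sum_{i\in\mathcal{V}}\bar{t}_i^k\|\le D^0/k$, which, via $|v_j|\le\|v\|$, implies the componentwise bound $\sum_{i\in\mathcal{V}}\bar{t}_i^k\le (D^0/k)\mathbf{1}_p$.

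For \eqref{eq:globineqconv}, I would read off the $i$-th block of \eqref{eq:Gybarkupperbound}, namely $g_i(\bar{x}_{\mathcal{N}_i}^k)-\bar{t}_i^k\le \tfrac{2(\|\bm{\lambda}^\star\|+\sqrt{C})}{k}\mathbf{1}_p$, sum over $i\in\mathcal{V}$, and then substitute the componentwise bound on $\sum_{i}\bar{t}_i^k$ obtained above. For the objective bound \eqref{eq:funcvalupperbound}, I would simply note that $\mathbf{f}(\mathbf{y})$ is independent of the auxiliary $t$-variables, so $\mathbf{f}(\bar{\mathbf{y}}^k)=\sum_{i\in\mathcal{V}}f_i(\bar{x}_{\mathcal{N}_i}^k)$, and that by equivalence of \eqref{eq:primalprob} and \eqref{eq:probleminy} one has $\mathbf{f}(\mathbf{y}^\star)=f^\star$; \eqref{eq:theofuncvalupperbound} is then precisely \eqref{eq:funcvalupperbound}.

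The corollary is essentially a bookkeeping translation rather than an independent convergence result, so the main thing to watch is the blockwise algebra around the equality constraint, in particular the reindexing $\sum_{i}\sum_{j\in\mathcal{N}_i}A_{ji}\bar{x}_i^k=\sum_{i}A_i\bar{x}_{\mathcal{N}_i}^k$ (which crucially uses that $\mathcal{G}$ is undirected) and the passage from the Euclidean bound on $\sum_{i}\bar{t}_i^k$ to a componentwise bound that couples cleanly with the $i$-th block of \eqref{eq:Gybarkupperbound}. No new Lyapunov argument or additional convergence analysis is required.
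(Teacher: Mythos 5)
Your proposal is correct and follows essentially the same route as the paper: it splits $\sum_{i}(B_i\bar{y}_i^k-c_i)$ into the $\sum_i\bar{t}_i^k$ block and the $\sum_i(A_i\bar{x}_{\mathcal{N}_i}^k-b_i)$ block (the paper performs the same reindexing in its display \eqref{eq:equivequalitysum}), bounds each sub-vector by the full norm to get \eqref{eq:priequalitybound} and the estimate on $\sum_i\bar{t}_i^k$, combines the latter componentwise with the summed blocks of \eqref{eq:Gybarkupperbound} to get \eqref{eq:globineqconv}, and reads \eqref{eq:funcvalupperbound} off \eqref{eq:theofuncvalupperbound}. No gaps.
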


\begin{proof}
See Appendix \ref{proofoftheorem3}.
\end{proof}

\begin{remark}\label{rmk:networkconnectivity}
The network connectivity influences the convergence of constraint violations and objective error via the smallest nonzero eigenvalue of $H$, i.e., $\lambda_{\min}(H)$. 
This is because $\|\mathbf{z}^{0}-\mathbf{z}^\star\|_{H^\dag}^2$, which determines the bounds on the right-hand side of  \eqref{eq:globineqconv}--\eqref{eq:funcvalupperbound}, is bounded by $\frac{1}{\lambda_{\min}(H)}\|\mathbf{v}\|^2$, where  $\mathbf{v}\in \mathbb{R}^{n(m+p)}$ such that $\mathbf{z}^0-\mathbf{z}^\star =H\mathbf{v}$. Therefore, larger $\lambda_{\min}(H)$, often indicating denser network connectivity, leads to faster convergence. On the other hand, denser network connectivity increases the computation and communication costs during each iteration. For example, adding a neighbor $\ell$ to node $i$ requires computing additional $d_\ell$ variables and requires transmitting $d_\ell + d_i + m + p$ additional real numbers.
\end{remark}

\begin{table*}[t]
	\centering
	\renewcommand*{\thetable}{1}
	\footnotesize
	\caption{ Comparison with related methods in solving problems with globally-coupled inequality constraints yet with no variable coupling in local functions.\label{table:comparison}}
	%\vspace*{0.1in}
	%%	\resizebox{1\textwidth}{!}改变表格中字体大小,解决表格过宽越界
	%p{}控制表格每列的宽度
	\resizebox{1\textwidth}{!}{ 	
		\begin{tabular}{|p{6.22cm}|p{0.85cm}|p{0.95cm}|p{1.22cm}|p{2cm}|p{1cm}|p{1.5cm}|}
			\hline
			\hfil Algorithm  & compact domain & smooth problem& existence of equality constraint & \centering{main computational cost} & constant step-size & convergence result\\
			\hline
			\hfil consensus-based primal-dual perturbation method 
			\cite{THChang2014} & \hfil $\surd$ &\hfil $\surd$ & &\hfil projection &&\hfil asymptotic\\
			\hline
			\hfil dual decomposition based method \cite{Falsone2017} &  \hfil$\surd$ & &&  \hfil subproblem &&\hfil asymptotic \\
			\hline
			\hfil RSDD \cite{Notarnicola2019} & \hfil$\surd$&  && \hfil subproblem&&\hfil asymptotic\\	
			\hline
			\hfil primal-dual gradient method %fi,gi是local smooth
			\cite{LiangS2019} & \hfil$\surd$ & \hfil$\surd$ &\hfil $\surd$& \hfil projection &\hfil$\surd$   &\hfil asymptotic\\	
			\hline
			\hfil	ALT \cite{falsone2023augmented}
			&&&\hfil$\surd$&\hfil subproblem &\hfil $\surd$&\hfil asymptotic\\	
			\hline
			\hfil	dual subgradient method \cite{LiangS2019a} &  \hfil $\surd$& &\hfil $\surd$ &\hfil subproblem && $O(\ln k/\sqrt{k})$\\
			\hline
			\hfil DPD-TV\cite{camisa2021distributed}&\hfil$\surd$&&&\hfil  subproblem &&\hfil $O(1/\ln k)$\\
			\hline
			\hfil $\text{DSA}_2$ \cite{LiuC2020}
			& \hfil$\surd$ & &&\hfil subproblem & &\hfil $O(1/\sqrt{k})$\\
			\hline
			\hfil	C-SP-SG \cite{Mateos2016}&\hfil$\surd$&&&\hfil projection &&\hfil $O(1/\sqrt{k})$\\
			\hline 
			\hfil	B-DPP \cite{wang2023distributed}&\hfil$\surd$&&&\hfil subproblem&&\hfil $O(1/\sqrt{k})$\\
			\hline
			\hfil	IPLUX \cite{wu2022distributed}&&&\hfil$\surd$&\hfil  subproblem&\hfil$\surd$&\hfil $O(1/k)$\\
			\hline
			\hfil	Algorithm~\ref{alg:algorithm} &\hfil$\surd$ &\hfil $\surd$ &\hfil$\surd$   &\hfil projection  &\hfil $\surd$&\hfil $O(1/k)$\\
			\hline
	\end{tabular}}
\end{table*}

\subsection{Comparative Discussion}\label{ssec:compdiss}
To the best of our knowledge, no prior methods can be directly applied to solve problem~\eqref{eq:primalprob} in a decentralized way. Nevertheless, the algorithms in \cite{THChang2014,Falsone2017,Notarnicola2019,LiangS2019,LiangS2019a,LiuC2020,camisa2021distributed ,falsone2023augmented,Mateos2016,wang2023distributed,wu2022distributed} are able to solve problems that admit globally-coupled inequality constraints yet \textit{no variable coupling}, where each node's local objective and constraint functions only depend on its own decision variable and are independent of its neighbors' decisions. Here, we compare Algorithm~\ref{alg:algorithm} with those algorithms for solving such \textit{specialized forms} of \eqref{eq:primalprob}.

Observe from TABLE~\ref{table:comparison} that the algorithms in \cite{THChang2014,Falsone2017,Notarnicola2019,LiangS2019,falsone2023augmented} do not have guaranteed convergence rates. The convergence rate results in \cite{LiangS2019a,LiuC2020,Mateos2016,camisa2021distributed,wang2023distributed} are weaker than the $O(1/k)$ rate for Algorithm~\ref{alg:algorithm}, while they allow certain nonsmoothness in their problems. Although IPLUX \cite{wu2022distributed} achieves $O(1/k)$ convergence rates in no need of a compact domain, it needs every node to solve a local convex optimization subproblem at each iteration (same for the algorithms in \cite{wang2023distributed,LiangS2019a,falsone2023augmented,camisa2021distributed,Notarnicola2019,Falsone2017,LiuC2020}), so that it has higher computational complexity than Algorithm 1, which adopts a projection operation, i.e., minimizing a positive definite quadratic function subject to the same convex constraints. Notably, only Algorithm~\ref{alg:algorithm} and \cite{LiangS2019,LiangS2019a, wu2022distributed,falsone2023augmented} consider problems with both inequality and equality constraints, while the others do not include equality constraints in their problem formulations. Although we may transform each equality constraint into two inequalities with reverse signs, this does not work for \cite{THChang2014,Mateos2016,camisa2021distributed} because their methods require each inequality constraint to be strictly negative at a Slater point in order to bound the dual optimal set, and would significantly increase the variable dimensions and computational/communication complexities of the algorithms in \cite{Falsone2017,Notarnicola2019,LiuC2020,wang2023distributed}. Furthermore, Algorithm~\ref{alg:algorithm} and \cite{falsone2023augmented,LiangS2019, wu2022distributed} adopt constant step-sizes, whereas the remaining algorithms in TABLE~\ref{table:comparison} use diminishing step-sizes, which may lead to slow convergence in practice.

Furthermore, we would like to point out that the methods in [15]–[25] cannot be extended to tackle problem~\eqref{eq:primalprob} with variable coupling in a straightforward way. This is because the distributed methods in \cite{LiangS2019a,falsone2023augmented,camisa2021distributed,Notarnicola2019,Falsone2017,LiuC2020} inherently rely on the separability of the Lagrangian dual problem, which, however, cannot be decomposed into $n$ local optimization problems with $x_i$ being the sole optimization variable when the variable coupling in $f_i$ and $g_i$ exists. The works \cite{wang2023distributed, wu2022distributed} update local primal variables by solving convex optimization problems that involve $f_i$ and $g_i$ at each iteration, which also cannot be implemented in a decentralized manner due to the variable coupling structure. The projection-based methods \cite{THChang2014,LiangS2019, Mateos2016} either fail to handle the equality constraints \cite{THChang2014, Mateos2016} as we stated before, or cannot ensure convergence \cite{LiangS2019} when addressing problem~\eqref{eq:primalprob}.

\subsection{Parameter Selections}\label{ssec:parameter}
In this subsection, we show that there exist $\gamma,\rho>0$ satisfying the parameter conditions in Theorem~\ref{theo:feasibility}, i.e., $C\ge0$ and \eqref{eq:parametergamma}, and then discuss how to select the parameters under those theoretical conditions in practice.

For simplicity, we arbitrarily fix $\rho>0$. To achieve $C\ge0$, we may simply let $0<\gamma\le\frac{\rho}{\|\mathbf{B}^{T}\mathbf{B}\|}$. When all the inequality constraint functions $g_i(x_{\mathcal{N}_i })$ $\forall i \in \mathcal{V}$ are affine, \eqref{eq:parametergamma} reduces to $0<\gamma\le(\tilde{\beta}^2+L_F +\frac{1}{\rho}\|\mathbf{B}^{T}\mathbf{B}\|)^{-1}$ because $L_g=0$. When $g_i(x_{\mathcal{N}_i })$ $\forall i \in \mathcal{V}$ are nonlinear, below we provide a range for $\gamma$ that sufficiently guarantees \eqref{eq:parametergamma}.

To do so, define $D:=\tilde{\beta}^2+ L_F+4\sqrt{np}L_g\|\bm{\lambda}^\star\|+\frac{1}{\rho}\|\mathbf{B}^{T}\mathbf{B}\|$ and $R:=C+\frac{1}{2\rho}\|\mathbf{y}^{0}-\mathbf{y}^\star\|_{\mathbf{B^T}\mathbf{B}}^2-\frac{1}{2\gamma}\|\mathbf{y}^{0}-\mathbf{y}^\star\|^2\ge0$ for convenience. Note from the definition of $C$ in Theorem~\ref{theo:feasibility} that $D$ and $R$ are both independent of $\gamma$. Then, \eqref{eq:parametergamma} can be rewritten as 
\begin{align}
\gamma D+ 4\gamma\sqrt{np}L_g\sqrt{C}\le1.\label{eq:gamma(D+sqrtC)le0}
\end{align}
Next, we find an upper bound on the left-hand side of \eqref{eq:gamma(D+sqrtC)le0}. Note that $\gamma D+ 4\gamma\sqrt{np}\sqrt{C}L_g\le \gamma D+ 4\sqrt{np}L_g\sqrt{\gamma^2R+\frac{\gamma}{2}\|\mathbf{y}^{0}-\mathbf{y}^\star\|^2}\le \gamma(D+ 4\sqrt{np}L_g\sqrt{R})+4\sqrt{\gamma}\sqrt{np}L_g\|\mathbf{y}^{0}-\mathbf{y}^\star\|$. Clearly, setting this upper bound no more than $1$ suffices to guarantee \eqref{eq:gamma(D+sqrtC)le0} and equivalently \eqref{eq:parametergamma}, which can be achieved by letting 
\begin{align}
0<\gamma\le\tilde{\gamma},\label{eq:gammarange} 
\end{align}
where  $\tilde{\gamma}:=\Bigl(\sqrt{4L_g^2np\|\mathbf{y}^{0}-\mathbf{y}^\star\|^2+(D+4\sqrt{np}L_g\sqrt{R})}+2L_g\sqrt{np}\|\mathbf{y}^{0}-\mathbf{y}^\star\|\Bigr)^{-2}$ is independent of $\gamma$ and $\tilde{\gamma}\le \frac{\rho}{\|\mathbf{B}^{T}\mathbf{B}\|}$. Therefore, for any given $\rho >0$, there always exists a sufficiently small $\gamma\in(0,\tilde{\gamma}]$ such that $C\ge0$ and \eqref{eq:parametergamma} holds. 

Such a sufficiently small $\gamma$ can be obtained either empirically (e.g., by trial and error) or analytically (e.g., by estimating $\tilde{\gamma}$). To do the latter, we may construct some lower bound on $\tilde{\gamma}$ without the knowledge of the optima $\mathbf{y}^\star, \mathbf{u}^\star, \bm{\lambda}^\star, \mathbf{z}^\star$, leading to a subinterval of $(0,\tilde{\gamma}]$ that still ensures $C\ge0$ and \eqref{eq:parametergamma}. The main challenge here is to bound $\|\mathbf{y}^\star\|$, $\|\mathbf{u}^\star\|_W$, $\|\bm{\lambda}^\star\|$, $\|\mathbf{z}^\star\|$ by means of available information only, and some feasible approaches are provided as follows. 
\begin{enumerate}[(1)]
\item \emph{Upper bound of $\|\mathbf{y}^\star\|$:} Recall that $\mathbf{y}^\star$ is composed of $\mathbf{x}^\star$ and $t^\star$. We may utilize the diameter of the compact set $X_{\mathcal{V}}$ and any point in $X_{\mathcal{V}}$ to bound $\|\mathbf{x}^\star\|$. Additionally, we may choose $t_i^\star=g_i(x_{\mathcal{N}_{i}}^\star)-\frac{1}{n}\sum_{j\in \mathcal{V}} g_j(x_{\mathcal{N}_{j}}^\star)$ $\forall i\in \mathcal{V}$ (see proof of Proposition~1(d) for its optimality and feasibility), and its upper bound can be acquired from the continuity of each $g_j$ over the compact $X_{\mathcal{N}_j}$. 
%which satisfies the $\sum_{j\in \mathcal{V}} t_i^\star=\mathbf{0}_p$ and $\mathbf{G}(\mathbf{y}^\star) \le \mathbf{0}_{np}$.  

\item \emph{Upper bound of $\|\mathbf{z}^\star\|$:} Since $\mathbf{z}^\star=\mathbf{B}\mathbf{y}^{\star}\!-\mathbf{c}$, its upper bound can be directly obtained using the bound on $\|\mathbf{y}^\star\|$.

\item \emph{Upper bound of $\|\bm{\lambda}^\star\|$:} By virtue of the Slater's condition of problem~\eqref{eq:probleminy} and the continuity of $\sum_{i\in \mathcal{V}} f_i(x_{\mathcal{N}_{i} })$ over the compact $X_{\mathcal{V}}$, the method in \cite{nedic2009approximate} can be employed to derive an upper bound on $\|\bm{\lambda}^\star\|$.

\item \emph{Upper bound of $\|\mathbf{u}^\star\|_W$:} Note that the Lagrangian function of \eqref{eq:probleminy} can be expressed as $L(\mathbf{y},\bm{\lambda}, \mathbf{u})= \mathbf{f}(\mathbf{y})+\langle \bm{\lambda},\mathbf{G}(\mathbf{y})\rangle+\langle \mathbf{u}, \mathbf{B}\mathbf{y}-\mathbf{c}\rangle$, where $\mathbf{u}=[u_1^T,\ldots,u_n^T]^T$ with $u_1 = \cdots = u_n$. Let the dual optima be written as $\bm{\lambda}^\star=[(\bm{\lambda}_1^\star)^T, \cdots, (\bm{\lambda}_n^\star)^T]^T\in \mathbb{R}^{np}$ and $u_{i}^\star=[(u_{i1}^\star)^T, (u_{i2}^\star)^T]^T$ $\forall i\in \mathcal{V}$ with $u_{i1}^\star\in \mathbb{R}^{m}$ and $u_{i2}^\star\in \mathbb{R}^{p}$. From the stationarity condition in the Karush-Kuhn-Tucker Conditions, we obtain $u_{i2}^\star=\bm{\lambda}_i^\star$ $\forall i\in \mathcal{V}$ and
\begin{align} 
	\nabla_\mathbf{x} \mathbf{f}(\mathbf{y}^\star) + \nabla_\mathbf{x} \mathbf{G}(\mathbf{y}^\star)\bm{\lambda}^\star+\bar{A}\bar{\mathbf{u}}_1^\star=\mathbf{0}, \label{eq:findui1}
\end{align}
where $\bar{A}=\operatorname{diag}(\bar{A}_1, \cdots, \bar{A}_n)$ with each $\bar{A}_i=(\sum_{j \in \mathcal{N}_i}A_{ji})^T$ and $\bar{\mathbf{u}}_1^\star=[(u_{11}^\star)^T, \cdots, (u_{n1}^\star)^T]^T$. Since $W\mathbf{u}^\star=\mathbf{u}^\star$, we have $\|\mathbf{u}^\star\|^2_W =\|\mathbf{u}^\star\|^2=\sum_{i=1}^n(\|u_{i1}^\star\|^2+\|u_{i2}^\star\|^2)=\|\bar{\mathbf{u}}_1^\star\|^2+\|\bm{\lambda}^\star\|^2$. Hence, it remains to bound $\|\bar{\mathbf{u}}_1^\star\|$. This may be realized by viewing \eqref{eq:findui1} as a linear equation with $\bar{\mathbf{u}}_1^\star$ being the unknown, which is guaranteed to have a solution, and deriving the expression of any solution. For example, when each $\bar{A}_{i}$ has full column rank, which occurs in many real-world problems such as task offloading over mobile edge computing networks \cite{xiao2018distributed} and economic dispatch for power systems \cite{yang2016distributed}, the linear equation \eqref{eq:findui1} has a unique solution 
\begin{align*}
	\bar{\mathbf{u}}_1^\star=-(\bar{A}^T\bar{A})^{-1}\bar{A}^T(\nabla_\mathbf{x} \mathbf{f}(\mathbf{y}^\star)+ \nabla_\mathbf{x} \mathbf{G}(\mathbf{y}^\star)\bm{\lambda}^\star). %\label{eq:ui1starunderrankAi=m}	
\end{align*}
When each $\bar{A}_{i}$ has full row rank, the minimum-norm solution is given by 
\begin{align*}
	\bar{\mathbf{u}}_1^\star=-\bar{A}^T(\bar{A}\bar{A}^T)^{-1}\!(\nabla_\mathbf{x} \mathbf{f}(\mathbf{y}^\star) +\nabla_\mathbf{x} \mathbf{G}(\mathbf{y}^\star)\bm{\lambda}^\star).
\end{align*}  
Thus, using the aforementioned bounds on $\mathbf{y}^\star,\bm{\lambda}^\star$ and the smoothness of $\mathbf{f},\mathbf{G}$, we can find an upper bound on $\|\bar{\mathbf{u}}_1^\star\|$ and therefore an upper bound on $\|\mathbf{u}^\star\|^2_W =\|\mathbf{u}^\star\|^2$.
\end{enumerate}

Upon bounding the above primal and dual optima, we are able to estimate the remaining quantities in $\tilde{\gamma}$, which involve certain problem and graph characteristics as well as the initial states, through mild global coordination or proper distributed computation methods. Such additional costs are minor in comparison with solving the complicated optimization problem~\eqref{eq:primalprob}.

\section{Numerical Experiments}\label{sec:example}
In this section, we validate our theoretical results by applying Algorithm~\ref{alg:algorithm} to two numerical examples. We first consider a globally-coupled constrained problem with no variable coupling, which allows us to compare the proposed algorithm with related works. The second numerical experiment intends to verify the effectiveness of Algorithm 1 on handling both variable coupling and globally-coupled constraints.
\begin{figure}[t]
\centering
\includegraphics[height=5.5cm,width=8.6cm]{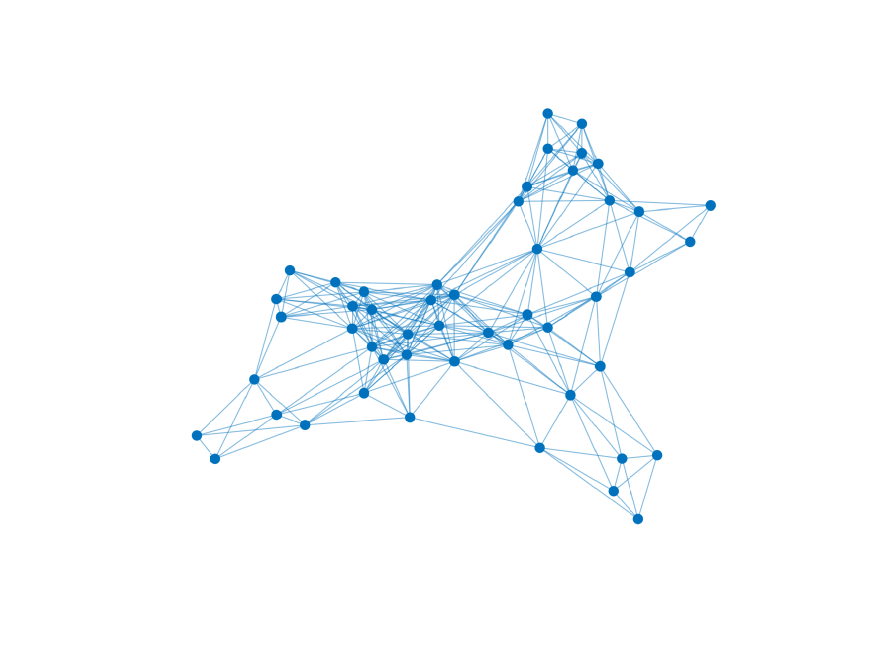}
\caption{Network topology with $n=50$ nodes}
\label{network}
\end{figure}

\subsection{First Numerical Example }\label{subsec:Example1_PEV}
Consider the following problem
\begin{equation}\label{eq:numercialexamp1}
\begin{array}{cl}
	{\underset{x_i \in [0,1] }{\operatorname{minimize}}} ~& {\sum\limits_{i \in \mathcal{V}}} c_ix_i \\
	{\operatorname { subject~to }} & {\sum\limits_{i \in \mathcal{V}}} -d_i\operatorname{log}(1+x_i) \leq -b.
\end{array}	
\end{equation}
\begin{figure}
	\centering
	\subfigure[]{\includegraphics[height=6cm,width=8cm]{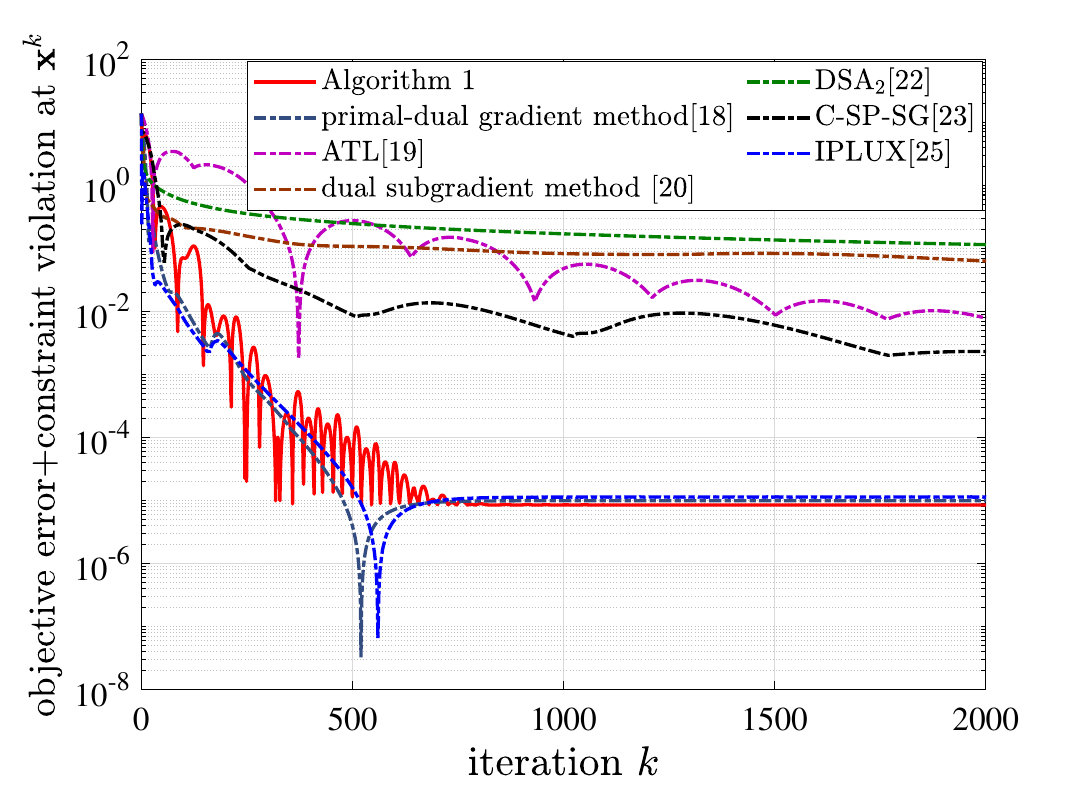}}
	\subfigure[]{	\includegraphics[height=6cm,width=8cm]{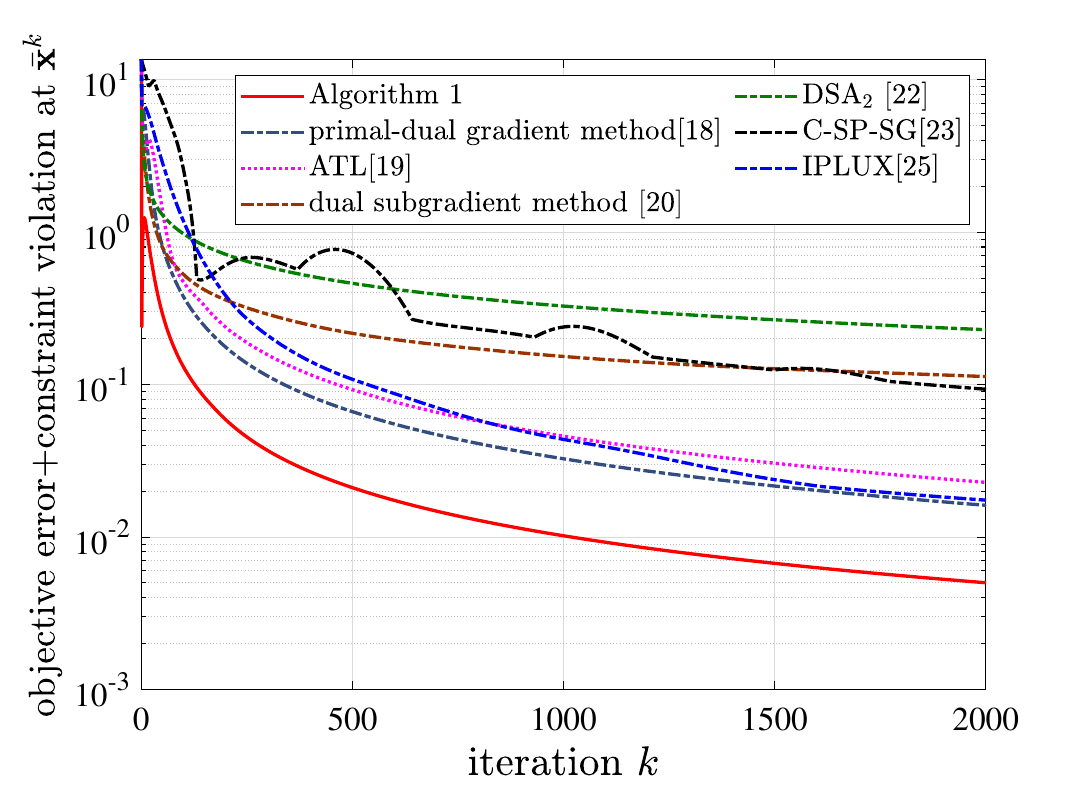}}
	\caption{Convergence performance of Algorithm~\ref{alg:algorithm} and alternative methods for solving problem~\eqref{eq:numercialexamp1}.}
	\label{ex1:optimal_error}
\end{figure}
Problem~\eqref{eq:numercialexamp1} arises in, for example, the quality
service of wireless networks and the plug-in electric vehicles charging problem \cite{LiuC2020,Mateos2016}. In this simulation, we consider a undirected network with $n=50$ nodes (see Fig.~\ref{network}), and the set of nodes is $\mathcal{V}=\{1, \ldots,50\}$. For each $i \in \mathcal{V}$, the constants $c_i, d_i$ are chosen randomly from a uniform distribution in $[0, 1]$ and $b = 5$. Clearly, the linear objective and smooth constraints in problem~\eqref{eq:numercialexamp1} meet Assumption~\ref{asm:primalprob}\eqref{compactXi}--\eqref{giLipschitz}. The Slater's condition can be satisfied when $x_i=1~\forall i \in \mathcal{V}$. Problem~\eqref{eq:numercialexamp1} satisfies
not only Assumption~\ref{asm:primalprob} but also the assumptions in \cite{LiangS2019,LiangS2019a,Mateos2016,falsone2023augmented,LiuC2020,wu2022distributed}.

Under the above settings, we compare Algorithm~\ref{alg:algorithm} with the algorithms proposed in \cite{LiangS2019,LiangS2019a,Mateos2016,falsone2023augmented,LiuC2020,wu2022distributed}. For a fair comparison, we hand-tune all the algorithm parameters to  achieve the best possible convergence performance. The optimal solution of problem~\eqref{eq:numercialexamp1} $\mathbf{x}^\star=[(x_1^\star)^T, \ldots, (x_n^\star)^T]^T$ is obtained by using the solver \textit{fmincon} in Matlab. Fig.~\ref{ex1:optimal_error} plots the sum of the objective error and constraint violation at $\mathbf{x}^k$ and $\bar{\mathbf{x}}^k$, respectively, i.e., $| \sum_{i=1}^{50}c_ix_i^k- c_ix_i^\star|+\|\max\{\sum_{i=1}^{50}-d_i\operatorname{log}(1+x_i^k)+b, 0\}\|$ and $| \sum_{i=1}^{50}c_i\bar{x}_i^k\!-\! c_ix_i^\star|+\|\max\{\sum_{i=1}^{50}-d_i\operatorname{log}(1+\bar{x}_i^k)+b, 0\}\|$, where $\mathbf{x}^k=[(x_1^k)^T,(x_2^k)^T,\ldots,(x_n^k)^T]^T$ is the network-wide primal iterate at iteration $k$, and $\bar{\mathbf{x}}^k:=\sum\limits_{\ell=1}^k \mathbf{x}^{\ell}/k$ is its running average. 
 
From Fig.~\ref{ex1:optimal_error}, we observe that Algorithm \ref{asm:primalprob} converges faster than the algorithms in \cite{falsone2023augmented,LiangS2019a,LiuC2020,Mateos2016}. It also achieves a comparable or slightly better convergence rate compared to the algorithms in \cite{LiangS2019} and \cite{wu2022distributed}. Nevertheless, \cite{LiangS2019} needs to transmit four times the number of variables as Algorithm~\ref{alg:algorithm} per iteration, leading to much higher communication cost, and \cite{wu2022distributed} requires higher computational costs per iteration than Algorithm~\ref{alg:algorithm}. The numerical results demonstrate the competitive convergence
 performance of our proposed algorithm.

\subsection{Second Numerical Example}
Consider the following problem
\begin{equation} \label{eq:numercialexamp2}
\begin{array}{cl}
	{\underset{x_i \in \mathbb{R}^{d}, \forall i \in \mathcal{V} }{\operatorname{minimize}}} ~& \!\sum\limits_{i \in \mathcal{V}} (x_{\mathcal{N}_{i}})^TP_ix_{\mathcal{N}_{i}}+Q_i^Tx_{\mathcal{N}_{i}} \\
	{\operatorname { subject~to }} & \!\sum\limits_{i \in \mathcal{V}} (x_{\mathcal{N}_{i}})^T\!\!A_ix_{\mathcal{N}_{i}}\!+\!a_i^Tx_{\mathcal{N}_{i}}\!\leq\! \sum\limits_{i \in \mathcal{V}}\!d_i, \\
	{}&\!\sum\limits_{i \in \mathcal{V}} B_ix_{\mathcal{N}_{i}} = {\sum\limits_{i \in \mathcal{V}}}b_i,\\
	{} & \!x_i \in \{x \in \mathbb{R}^d : l_i \le x \le u_i\},
\end{array}
\end{equation}
where each $P_i, A_i \in \mathbb{R}^{d
|\mathcal{N}_i|\times d
|\mathcal{N}_i|}$ is symmetric positive semidefinite for guaranteeing the convexity of local functions. Besides, $Q_i, a_i \in  \mathbb{R}^{d|\mathcal{N}_i|}$, $B_i \in  \mathbb{R}^{m \times d|\mathcal{N}_i|}, b_i \in \mathbb{R}^{m}$, and $d_i \in \mathbb{R}$. These problem data are randomly generated under the above conditions. Under the same network environment as Fig.~\ref{network}, let $m=d=2$ and $X_i=\{x \in \mathbb{R}^d : l_i \le x \le u_i\}$, $f_{i}\left(x_{\mathcal{N}_{i}}\right)=(x_{\mathcal{N}_{i}})^TP_ix_{\mathcal{N}_{i}}+Q_i^Tx_{\mathcal{N}_{i}}$ and $g_{i}\left(x_{\mathcal{N}_{i}}\right)=(x_{\mathcal{N}_{i}})^TA_ix_{\mathcal{N}_{i}}+a_i^Tx_{\mathcal{N}_{i}}-d_i$. Apparently, problem~\eqref{eq:numercialexamp2} satisfies Assumption~\ref{asm:primalprob} under proper problem data.
\begin{figure}
	\centering
	\subfigure[]{\includegraphics[height=6cm,width=8cm]{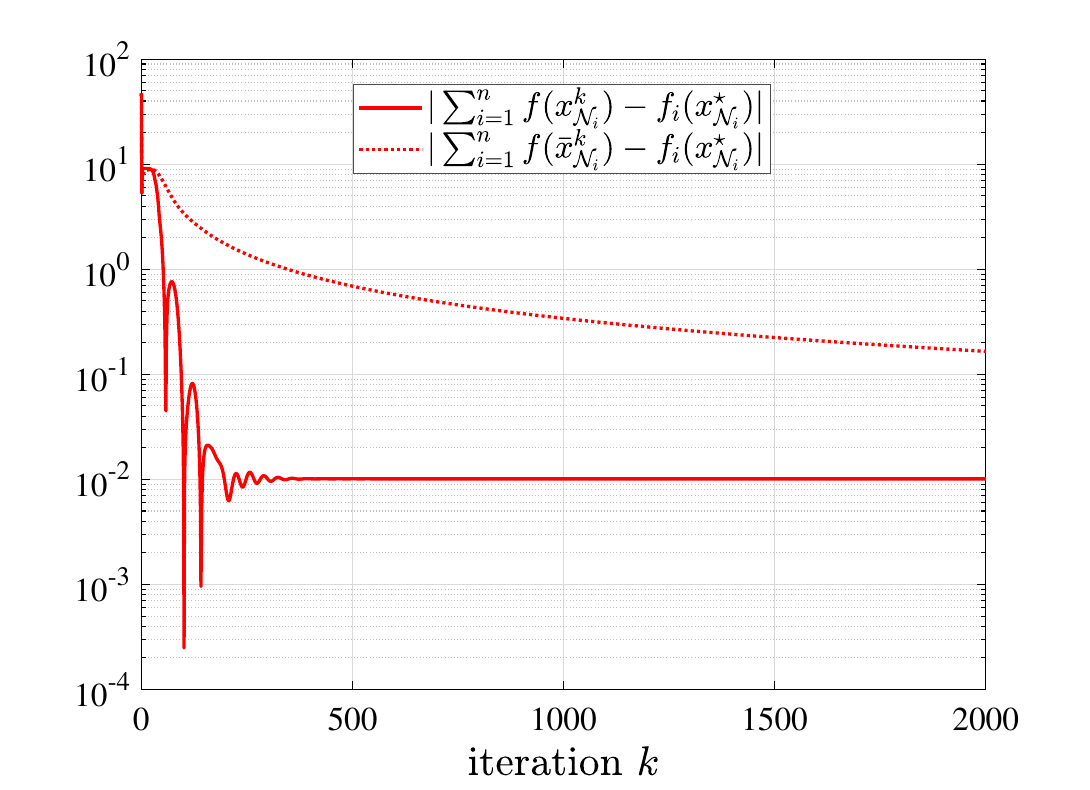}}
	\subfigure[]{	\includegraphics[height=6cm,width=8cm]{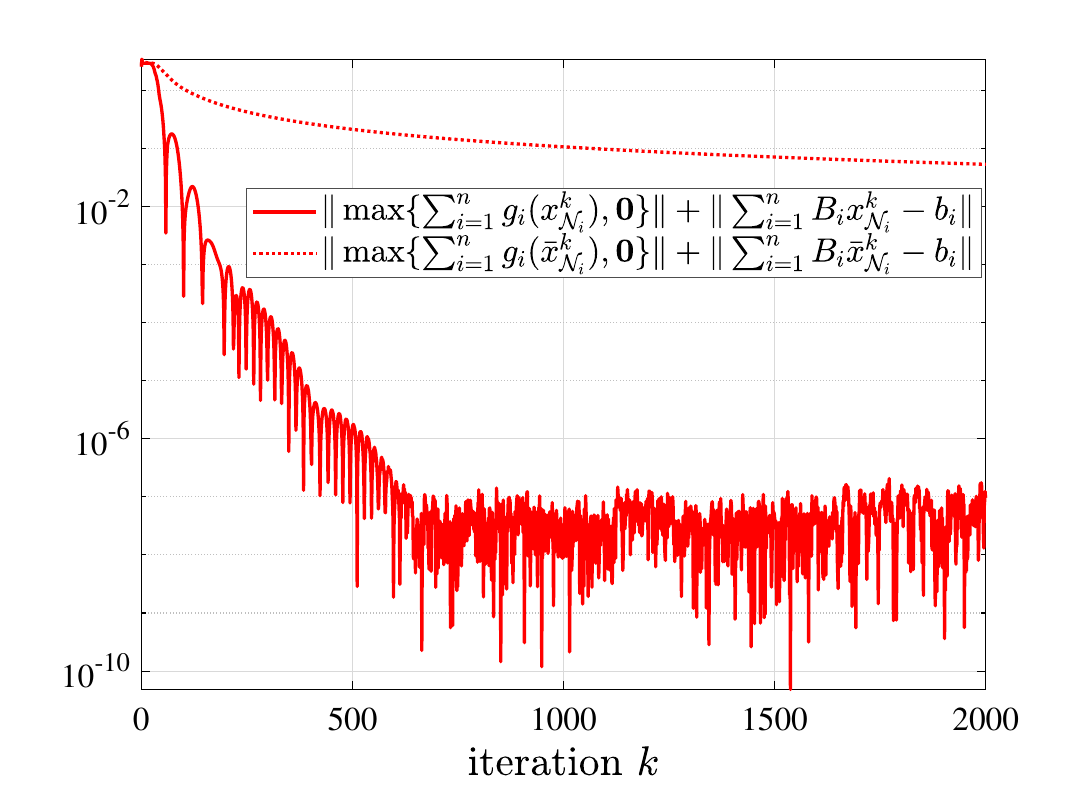}}
	\caption{Convergence performance of Algorithm~\ref{alg:algorithm} for solving problem~\eqref{eq:numercialexamp2}.}  
	\label{ex2:optimal_error}
\end{figure}

Problem \eqref{eq:numercialexamp2} has a complicated coupling structure in the form of \eqref{eq:primalprob} and no existing decentralized optimization algorithms can solve \eqref{eq:numercialexamp2}. Hence, we only execute Algorithm~\ref{alg:algorithm} to solve it. Fig.~\ref{ex2:optimal_error} depicts the objective error and the sum of inequality constraint violation and equality constraint violation at $\mathbf{x}^k$ and $\bar{\mathbf{x}}^k$, respectively. The results in Fig.~\ref{ex2:optimal_error} verify the effectiveness of Algorithm~\ref{alg:algorithm} on solving problems with both variable coupling and globally-coupled constraints. 

\section{Conclusion}\label{sec:conclusion}
In this paper, we have developed a decentralized projected primal-dual algorithm for addressing a generally-coupled constrained optimization problem with a variable coupling structure, where the local objective and constraint functions of each node are determined by local variables as well as the variables of its neighbors. The proposed algorithm is developed based on the gradient projection method, a virtual queue technique, and a primal-dual-primal method, and is shown to achieve convergence at $O(1/k)$ rates in terms of both optimality and infeasibility. The effectiveness and efficiency of the proposed algorithm have been demonstrated via two sets of simulations.

\appendix

\subsection{Proof of Proposition~\ref{pro:propertyofproby}}\label{ssec:proofofproLipschitz}

First, the convexity of $\mathbf{f}$ and $\mathbf{G}$ in (\ref{pro:fygyconvex}) is a direct consequence of the convexity of $\sum_{i \in \mathcal{V}} f_{i}\left(x_{\mathcal{N}_{i}}\right)$ and $g_i$ in Assumption~\ref{asm:primalprob}(\ref{slater}). 

To prove the Lipschitz continuity in (\ref{pro:gardfsmooth}) and (\ref{pro:Gycontinuous}), let $\mathbf{y}=[x_1^T,t_1^T,\ldots,x_n^T,t_n^T]^T\in Y$ and $\mathbf{y}'=[(x_1')^T,(t_1')^T,\ldots,(x_n')^T,(t_n')^T]^T\in Y$. For simplicity, for any $j\in\mathcal{V}$ and $i\in\mathcal{N}_j$, let $\nabla_i f_j(x_{\mathcal{N}_{j}})$ be the partial derivative of $f_j(x_{\mathcal{N}_{j}})$ with respect to $x_i$. Then, due to Assumption~\ref{asm:primalprob}(\ref{fismooth}), $\|\nabla\mathbf{f}(\mathbf{y})-\nabla\mathbf{f}(\mathbf{y'})\|^2=\sum_{i\in \mathcal{V}}\|\sum_{j \in  \mathcal{N}_{i}}(\nabla_i f_j(x_{\mathcal{N}_{j}})-\nabla_i f_j(x'_{\mathcal{N}_{j}}))\|^2\le\max_{i\in \mathcal{V}}(|\mathcal{N}_{i}|)\sum_{i\in \mathcal{V}}\sum_{j\in\mathcal{N}_{i}}\|\nabla_i f_j(x_{\mathcal{N}_{j}})-\nabla_i f_j(x'_{\mathcal{N}_{j}})\|^2=\max_{i\in \mathcal{V}}(|\mathcal{N}_{i}|)\sum_{i\in \mathcal{V}} \|\nabla f_i(x_{\mathcal{N}_{i} })
\linebreak[4]-\nabla f_i(x'_{\mathcal{N}_{i}})\|^2\le \max_{i\in \mathcal{V}}(|\mathcal{N}_{i}|)L_f^2\sum_{i\in \mathcal{V}}\|x_{\mathcal{N}_{i} }-x_{\mathcal{N}_{i} }'\|^2$. In addition, we have
\begin{align}
&\sum_{i\in \mathcal{V}}\|x_{\mathcal{N}_{i} }-x_{\mathcal{N}_i}'\|^2 = \sum_{i\in \mathcal{V}}\sum\limits_{j \in \mathcal{N}_i }\|x_j-x_j'\|^2\nonumber\displaybreak[0]\\
%&=\sum\limits_{i\in \mathcal{V}}(1+|\mathcal{N}_i|)\|x_i-x_i'\|^2\\
&\le\max_{i\in \mathcal{V}}(|\mathcal{N}_i|)\sum_{i\in \mathcal{V}}\|x_i-x_i'\|^2.\label{eq:relationshipxNixi}
\end{align}
It follows that $\|\nabla\mathbf{f}(\mathbf{y})-\nabla\mathbf{f}(\mathbf{y'})\|^2 \le L_f^2 (\max_{i\in \mathcal{V}}(|\mathcal{N}_{i}|))^2\sum_{i\in \mathcal{V}}\|x_i-x_i'\|^2\le L_f^2(\max_{i\in \mathcal{V}}(|\mathcal{N}_{i}|))^2\|\mathbf{y}-\mathbf{y'}\|^2$, i.e., property~(\ref{pro:gardfsmooth}) holds.

From Assumption~\ref{asm:primalprob}(\ref{giLipschitz}) and \eqref{eq:relationshipxNixi},
\begin{align*}
&\|\mathbf{G}(\mathbf{y})-\mathbf{G}(\mathbf{y}')\|^2\displaybreak[0]\\
=&\sum_{i\in \mathcal{V}}\|(g_i(x_{\mathcal{N}_{i}})-t_i)-(g_i(x_{\mathcal{N}_{i}}')-t_i')\|^2\displaybreak[0]\\
\le&\sum_{i\in \mathcal{V}}(1+\frac{1}{\beta^2})\|g_i(x_{\mathcal{N}_{i}})-g_i(x_{\mathcal{N}_{i}}')\|^2\displaybreak[0]\\
&+\sum_{i\in \mathcal{V}}(1+\beta^2)\|t_i-t_i'\|^2\displaybreak[0]\\
\le&\sum_{i\in \mathcal{V}}(1+\beta^2)(\|x_{\mathcal{N}_{i}}-x_{\mathcal{N}_{i}}'\|^2+\|t_i-t_i'\|^2)\displaybreak[0]\\
\le&(1+\beta^2)\max_{i\in \mathcal{V}}(|\mathcal{N}_{i}|)\|\mathbf{y}-\mathbf{y'}\|^2,
\end{align*}
where the second step comes from $-2\langle g_i(x_{\mathcal{N}_{i}})-g_i(x_{\mathcal{N}_{i}}'), t_i-t_i'\rangle\le \frac{1}{\beta^2}\|g_i(x_{\mathcal{N}_{i}})-g_i(x_{\mathcal{N}_{i}}')\|^2+\beta^2\|t_i-t_i'\|^2$ $\forall i\in\mathcal{V}$. Therefore, property~(\ref{pro:Gycontinuous}) holds.

To prove property~(\ref{pro: equivsolguarteen}), let $\mathbf{x}^\star=[(x_1^\star)^T, \ldots, (x_n^\star)^T]^T$ be an optimum of problem~\eqref{eq:primalprob}, which is guaranteed to exist due to Assumption~\ref{asm:primalprob}(\ref{least1soluprimal}). Then, we let $t_i^\star=g_i(x_{\mathcal{N}_{i}}^\star)-\frac{1}{n}\sum_{j\in \mathcal{V}} g_j(x_{\mathcal{N}_{j}}^\star)$ $\forall i\in \mathcal{V}$, and let $\mathbf{y}^\star=[(x_1^\star)^T, (t_1^\star)^T,\ldots,(x_n^\star)^T, (t_n^\star)^T]^T$. It can be shown that $\mathbf{y}^\star$ is feasible to problem~\eqref{eq:probleminy}. Moreover, because $\mathbf{f}(\mathbf{y}^\star)=\sum_{i\in \mathcal{V}} f_i(x_{\mathcal{N}_{i}}^\star)$ and because \eqref{eq:primalprob} and \eqref{eq:probleminy} share the same optimal value, $\mathbf{y}^\star$ is an optimum of problem~\eqref{eq:probleminy}. On the other hand, let  $\mathbf{y}^\star=[(x_1^\star)^T, (t_1^\star)^T,\ldots,(x_n^\star)^T, (t_n^\star)^T]^T$ be an optimum of problem~\eqref{eq:probleminy}. We can verify that $[(x_1^\star)^T, \ldots, (x_n^\star)^T]^T$ is feasible to problem~\eqref{eq:primalprob}, where the coupled equality constraint in \eqref{eq:primalprob} is directly satisfied and the coupled inequality constraint $\sum_{i=1}^n g_i(x_{\mathcal{N}_{i}}^\star) \le \mathbf{0}_p$ is obtained by summing $\sum_{i=1}^n t_i^\star=\mathbf{0}_p $ and each $ g_i(x_{\mathcal{N}_{i}}^\star)-t_i^\star \le \mathbf{0}_p$ in \eqref{eq:probleminy}. Moreover, $\sum_{i\in \mathcal{V}} f_i(x_{\mathcal{N}_{i}}^\star)=\mathbf{f}(\mathbf{y}^\star)$. Therefore, $[(x_1^\star)^T, \ldots, (x_n^\star)^T]^T$ is an optimum of problem~\eqref{eq:primalprob} and property~\eqref{pro: equivsolguarteen} holds.

Finally, let $\tilde{x}_i$ $\forall i\in\mathcal{V}$ be given by Assumption~\ref{asm:primalprob}(\ref{slater}), and let $\tilde{t}_i=g_i(\tilde{x}_{\mathcal{N}_{i}})-\frac{1}{n}\sum_{j\in \mathcal{V}} g_j(\tilde{x}_{\mathcal{N}_{j}})$ $\forall i\in \mathcal{V}$. It can thus be verified that $[(\tilde{x}_1)^T, (\tilde{t}_1)^T,\ldots,(\tilde{x}_n)^T, (\tilde{t}_n)^T]^T$ is a Slater's point for problem~\eqref{eq:probleminy}, so that property~(\ref{pro:equivstrongdual}) holds.

\subsection{Proof of Theorem~\ref{theo:feasibility}} \label{ssec:proofoftheorem1}

We first bound the constraint violations with respect to problem~\eqref{eq:probleminy} at $\bar{\mathbf{y}}^k$ in the lemma below.

\begin{lemma}\label{lemma:GybarBybarboundk}
Suppose Assumptions~\ref{asm:primalprob} and~\ref{asm:matricesphpw} hold. Then, for each $k\ge 1$,
\begin{align}
	\mathbf{G}(\bar{\mathbf{y}}^k)&\le\frac{\mathbf{q}^{k}}{k}, \label{eq:Gybarbound}\displaybreak[0]\\
	\!\!\!(\mathbf{1}_n\otimes I_{m+p})^T(\mathbf{B}\bar{\mathbf{y}}^k-\mathbf{c})&=\frac{\rho}{k}(\mathbf{1}_n\otimes I_{m+p})^T(\mathbf{u}^{k}-\mathbf{u}^{0}).\label{eq:Bybarbound}
\end{align}
\end{lemma}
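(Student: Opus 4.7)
The plan is to prove the two claims separately, each via short telescoping arguments that exploit the specific structure of the dual updates.

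For the inequality \eqref{eq:Gybarbound}, I would first read off from the virtual-queue update \eqref{eq:updateofqk} that, because $\mathbf{q}^{k+1}\ge \mathbf{q}^{k}+\mathbf{G}(\mathbf{y}^{k+1})$ componentwise by definition of the max, we obtain $\mathbf{G}(\mathbf{y}^{k+1})\le \mathbf{q}^{k+1}-\mathbf{q}^{k}$. Telescoping from $\ell=1$ to $k$ gives $\sum_{\ell=1}^{k}\mathbf{G}(\mathbf{y}^{\ell})\le \mathbf{q}^{k}-\mathbf{q}^{0}$, and since the initialization $\mathbf{q}^{0}=\max\{-\mathbf{G}(\mathbf{y}^{0}),\mathbf{0}\}\ge\mathbf{0}$ we can drop the $-\mathbf{q}^{0}$ term. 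Dividing by $k$ and invoking Jensen's inequality on each component of $\mathbf{G}$, which is convex on $Y$ by Proposition~\ref{pro:propertyofproby}(\ref{pro:fygyconvex}) and contains all iterates $\mathbf{y}^{\ell}$ since \eqref{eq:projofyk} projects onto $Y$, yields $\mathbf{G}(\bar{\mathbf{y}}^{k})\le \tfrac{1}{k}\sum_{\ell=1}^{k}\mathbf{G}(\mathbf{y}^{\ell})\le \mathbf{q}^{k}/k$.

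For the equality \eqref{eq:Bybarbound}, the key observation is that Assumption~\ref{asm:matricesphpw}(c) gives $P^{W}\mathbf{1}_{n}=\mathbf{1}_{n}$ and, together with the symmetry and positive semidefiniteness in (b), $P^{H}\mathbf{1}_{n}=\mathbf{0}_{n}$. Tensoring with $I_{m+p}$ yields the two identities $(\mathbf{1}_{n}\otimes I_{m+p})^{T}W=(\mathbf{1}_{n}\otimes I_{m+p})^{T}$ and $(\mathbf{1}_{n}\otimes I_{m+p})^{T}H=\mathbf{0}$. Applying the second identity to the initialization $\mathbf{z}^{0}=\rho H\mathbf{u}^{0}$ and to the recursion \eqref{eq:updateofzk} gives $(\mathbf{1}_{n}\otimes I_{m+p})^{T}\mathbf{z}^{k}=\mathbf{0}$ for every $k\ge 0$ by induction. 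Left-multiplying the update \eqref{eq:updateofuk} by $(\mathbf{1}_{n}\otimes I_{m+p})^{T}$ and using the first identity together with this null relation collapses the $W$ and $\mathbf{z}^{k}$ terms and produces
\[
(\mathbf{1}_{n}\otimes I_{m+p})^{T}(\mathbf{B}\mathbf{y}^{k+1}-\mathbf{c})=\rho(\mathbf{1}_{n}\otimes I_{m+p})^{T}(\mathbf{u}^{k+1}-\mathbf{u}^{k}).
\]
Telescoping this identity from $\ell=1$ to $k$ and dividing by $k$ gives \eqref{eq:Bybarbound}.

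Neither step poses a genuine obstacle: the main subtlety is simply tracking the two Kronecker-product identities and verifying that $\mathbf{q}^{0}\ge \mathbf{0}$ so the telescoped bound survives. I would state the identities $(\mathbf{1}_{n}\otimes I_{m+p})^{T}W=(\mathbf{1}_{n}\otimes I_{m+p})^{T}$ and $(\mathbf{1}_{n}\otimes I_{m+p})^{T}H=\mathbf{0}$ as an intermediate fact, and treat the convexity-plus-telescoping argument for $\mathbf{G}$ as a one-liner once $\mathbf{q}^{0}\ge\mathbf{0}$ is noted.
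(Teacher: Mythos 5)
Your proof is correct and is precisely the standard telescoping argument (virtual-queue monotonicity plus Jensen for the inequality; the Kronecker identities $(\mathbf{1}_n\otimes I_{m+p})^TW=(\mathbf{1}_n\otimes I_{m+p})^T$, $(\mathbf{1}_n\otimes I_{m+p})^TH=\mathbf{0}$ plus telescoping for the equality) that the paper delegates entirely to a citation of eqs.~(37)--(38) in \cite{wu2022distributed}. Your write-up is a self-contained version of that same argument, with no gaps.
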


\begin{proof}
Refer to the proof of \cite[eq.(37), eq.(38)] {wu2022distributed}.	
\end{proof}		

From Lemma~\ref{lemma:GybarBybarboundk}, Theorem~\ref{theo:feasibility} can be proved as long as we can further bound $\mathbf{q}^{k}$ and $(\mathbf{1}_n\otimes I_{m+p})^T(\mathbf{u}^{k}-\mathbf{u}^{0})$. To this end, we introduce the following lemmas.

\begin{lemma} \label{lemma:lipschitzofR}
Suppose Assumptions~\ref{asm:primalprob} and~\ref{asm:matricesphpw} hold. For each $k\ge0$, $\nabla\mathcal{R}^k(\mathbf{y})$ is Lipschitz continuous on the set $Y$ with Lipschitz constant $L_{\mathcal{R}^k}:=L_F+\langle\mathbf{q}^{k}+\mathbf{G}(\mathbf{y}^{k}), L_g\mathbf{1}_{np}\rangle+\frac{1}{\rho}\|\mathbf{B}^{T}\mathbf{B}\|$.
\end{lemma}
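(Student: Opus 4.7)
The plan is to differentiate $\mathcal{R}^k$ term by term, identify which terms depend on $\mathbf{y}$, and bound the Lipschitz constant of each via the triangle inequality.

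From the definition of $\mathcal{R}^k(\mathbf{y})$ in \eqref{eq:defineofRt}, its gradient is
\begin{align*}
\nabla\mathcal{R}^k(\mathbf{y})=&\nabla\mathbf{f}(\mathbf{y})+\partial\mathbf{G}(\mathbf{y})^T(\mathbf{q}^k+\mathbf{G}(\mathbf{y}^k))\\
&+\mathbf{B}^T(W\mathbf{u}^k-\tfrac{1}{\rho}\mathbf{z}^k)+\tfrac{1}{\rho}\mathbf{B}^T(\mathbf{B}\mathbf{y}-\mathbf{c}).
\end{align*}
Only three items depend on $\mathbf{y}$: the smooth gradient $\nabla\mathbf{f}(\mathbf{y})$, the weighted Jacobian term, and the quadratic piece $\frac{1}{\rho}\mathbf{B}^T\mathbf{B}\mathbf{y}$. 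By Proposition~\ref{pro:propertyofproby}(\ref{pro:gardfsmooth}) the first is $L_F$-Lipschitz on $Y$, and the third is clearly $\frac{1}{\rho}\|\mathbf{B}^T\mathbf{B}\|$-Lipschitz. So the whole job reduces to bounding the Lipschitz constant of $\mathbf{y}\mapsto\partial\mathbf{G}(\mathbf{y})^T(\mathbf{q}^k+\mathbf{G}(\mathbf{y}^k))$.

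Write $\mathbf{w}:=\mathbf{q}^k+\mathbf{G}(\mathbf{y}^k)\in\mathbb{R}^{np}$, which is a constant vector (independent of the running $\mathbf{y}$). If $G_\ell$ denotes the $\ell$-th component of $\mathbf{G}$, then each $G_\ell(\mathbf{y})=g_{ij}(x_{\mathcal{N}_i})-[t_i]_j$ for appropriate indices $i,j$. The $t$-term is linear in $\mathbf{y}$ so it does not affect smoothness, while Assumption~\ref{asm:primalprob}(\ref{gijsmooth}) gives that $\nabla g_{ij}$ is $L_g$-Lipschitz on $X_{\mathcal{N}_i}$; padding by zeros on the remaining coordinates of $\mathbf{y}$ preserves this constant. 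Consequently, for any $\mathbf{y},\mathbf{y}'\in Y$,
\begin{align*}
\|\partial\mathbf{G}(\mathbf{y})^T\mathbf{w}-\partial\mathbf{G}(\mathbf{y}')^T\mathbf{w}\|
&=\bigl\|\textstyle\sum_\ell w_\ell(\nabla G_\ell(\mathbf{y})-\nabla G_\ell(\mathbf{y}'))\bigr\|\\
&\le \textstyle\sum_\ell |w_\ell|\,L_g\|\mathbf{y}-\mathbf{y}'\|.
\end{align*}

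The main (small) obstacle is to replace $|w_\ell|$ by $w_\ell$ so that $\sum_\ell|w_\ell|=\langle\mathbf{1}_{np},\mathbf{w}\rangle$, which yields the advertised constant. For this I will verify that $\mathbf{w}=\mathbf{q}^k+\mathbf{G}(\mathbf{y}^k)\ge\mathbf{0}_{np}$ for every $k\ge 0$. At $k=0$ the initialization $\mathbf{q}^0=\max\{-\mathbf{G}(\mathbf{y}^0),\mathbf{0}\}$ gives $\mathbf{q}^0+\mathbf{G}(\mathbf{y}^0)=\max\{\mathbf{G}(\mathbf{y}^0),\mathbf{0}\}\ge\mathbf{0}$; for $k\ge 1$ the virtual-queue update \eqref{eq:updateofqk} directly yields $\mathbf{q}^k\ge -\mathbf{G}(\mathbf{y}^k)$, hence $\mathbf{w}\ge\mathbf{0}$. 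Then $\sum_\ell|w_\ell|L_g=\langle\mathbf{q}^k+\mathbf{G}(\mathbf{y}^k),L_g\mathbf{1}_{np}\rangle$.

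Collecting the three Lipschitz estimates via the triangle inequality gives
\[
\|\nabla\mathcal{R}^k(\mathbf{y})-\nabla\mathcal{R}^k(\mathbf{y}')\|\le L_{\mathcal{R}^k}\|\mathbf{y}-\mathbf{y}'\|,
\]
with $L_{\mathcal{R}^k}$ exactly as claimed, completing the proof.
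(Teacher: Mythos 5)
Your proposal is correct and follows essentially the same route as the paper's proof: decompose $\nabla\mathcal{R}^k$ into the three $\mathbf{y}$-dependent pieces, bound them by $L_F$, $\frac{1}{\rho}\|\mathbf{B}^T\mathbf{B}\|$, and a componentwise $L_g$ bound on the Jacobian term, and use the nonnegativity of $\mathbf{q}^k+\mathbf{G}(\mathbf{y}^k)$ (from the initialization and the virtual-queue update) to turn $\sum_\ell|w_\ell|$ into the inner product $\langle\mathbf{q}^k+\mathbf{G}(\mathbf{y}^k),L_g\mathbf{1}_{np}\rangle$. The only cosmetic difference is that you spell out why each $\nabla G_\ell$ inherits the constant $L_g$ (linearity of the $t$-part and zero-padding), which the paper leaves implicit.
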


\begin{proof}
For any $\mathbf{y},\mathbf{y'}\in Y$,
\begin{align}
	&\|\nabla \mathcal{R}^k(\mathbf{y})-\nabla \mathcal{R}^k(\mathbf{y'})\|\nonumber\displaybreak[0]\\
	&\le\|\nabla\mathbf{f}(\mathbf{y})-\nabla\mathbf{f}(\mathbf{y'})\|+\|\frac{1}{\rho}\mathbf{B}^{T}\mathbf{B}(\mathbf{y}-\mathbf{y'})\|\!\nonumber\displaybreak[0]\\
	&+\sum_{i=1}^{n}\sum\limits_{\ell=1}^{p}\|(q_{i\ell}^k+G_{i\ell}(\mathbf{y}^{k}))(\nabla G_{i\ell}(\mathbf{y})-\nabla G_{i\ell}(\mathbf{y'}))\|,\label{eq:smoothofR}
\end{align}
where $q_{i\ell}^k$ and $G_{i\ell}$, $\ell=1,\ldots,p$ represent the $\ell$th coordinates of $\mathbf{q}^k$ and $G_i$, respectively. Note from $\mathbf{q}^{0} = \max\{-\mathbf{G}(\mathbf{y}^{0}),\mathbf{0}\}$ and \eqref{eq:updateofqk} that $\mathbf{q}^{k}+\mathbf{G}(\mathbf{y}^{k})\ge\mathbf{0}$. It follows from Assumption~\ref{asm:primalprob}(\ref{gijsmooth})	that $\|(q_{i\ell}^k+G_{i\ell}(\mathbf{y}^{k}))(\nabla G_{i\ell}(\mathbf{y})-\nabla G_{i\ell}(\mathbf{y'}))\|=(q_{i\ell}^k+G_{i\ell}(\mathbf{y}^{k}))\|\nabla G_{i\ell}(\mathbf{y})-\nabla G_{i\ell}(\mathbf{y'})\|\le(q_{i\ell}^k+G_{i\ell}(\mathbf{y}^{k}))L_g\|\mathbf{y}-\mathbf{y'}\|$. Incorporating this and Proposition~\ref{pro:propertyofproby}(\ref{pro:gardfsmooth}) into \eqref{eq:smoothofR} leads to $\|\nabla \mathcal{R}^k(\mathbf{y})-\nabla \mathcal{R}^k(\mathbf{y'})\|\le L_{\mathcal{R}^k}\|\mathbf{y}-\mathbf{y'}\|$.
\end{proof}

\begin{lemma}\label{lemma:qkproperties} %这一部分可能不一定需要
For each $k\ge 0$,
\begin{align}
	&\mathbf{q}^{k}\ge\mathbf{0},\label{eq:qknonnegative}\displaybreak[0]\\
	&\|\mathbf{q}^{0}\| \le \|\mathbf{G}(\mathbf{y}^{0})\|,\quad\|\mathbf{q}^{k+1}\| \ge \|\mathbf{G}(\mathbf{y}^{k+1})\|.\label{eq:qkplus1normlargethang}
	%&\langle\mathbf{q}^{k},\mathbf{G}(\mathbf{y}^{k+1})\rangle\displaybreak[0]\nonumber\\
	%&\ge \frac{1}{2}(\|\mathbf{q}^{k+1}\|^2-\|\mathbf{q}^{k}\|^2)-\|\mathbf{G}(\mathbf{y}^{k+1})\|^2.\label{eq:successivepropertyofqk}
\end{align} 
\end{lemma}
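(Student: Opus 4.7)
The plan is to prove both parts by unfolding the virtual-queue update and doing a straightforward componentwise analysis. Let me write $q_{i\ell}^k$ and $G_{i\ell}$ for the coordinates of $\mathbf{q}^k$ and $\mathbf{G}$, so that the updates read $q_{i\ell}^0=\max\{-G_{i\ell}(\mathbf{y}^0),0\}$ and $q_{i\ell}^{k+1}=\max\{-G_{i\ell}(\mathbf{y}^{k+1}),\,q_{i\ell}^k+G_{i\ell}(\mathbf{y}^{k+1})\}$.

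For \eqref{eq:qknonnegative}, I would induct on $k$. The base case is immediate from the initialization. For the inductive step, assuming $\mathbf{q}^k\ge\mathbf{0}$, observe that the two arguments of the $\max$ defining $q_{i\ell}^{k+1}$ sum to $q_{i\ell}^k\ge 0$, so their maximum is at least $q_{i\ell}^k/2\ge 0$. This gives $\mathbf{q}^{k+1}\ge\mathbf{0}$ componentwise.

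For the first bound in \eqref{eq:qkplus1normlargethang}, the initialization directly yields $|q_{i\ell}^0|=\max\{-G_{i\ell}(\mathbf{y}^0),0\}\le|G_{i\ell}(\mathbf{y}^0)|$ for every $i,\ell$; squaring and summing gives $\|\mathbf{q}^0\|\le\|\mathbf{G}(\mathbf{y}^0)\|$. For the second, lower-bound, I would show the stronger componentwise inequality $q_{i\ell}^{k+1}\ge|G_{i\ell}(\mathbf{y}^{k+1})|$, from which $\|\mathbf{q}^{k+1}\|\ge\|\mathbf{G}(\mathbf{y}^{k+1})\|$ follows by squaring and summing. Split on the sign of $G_{i\ell}(\mathbf{y}^{k+1})$: if it is negative, then $q_{i\ell}^{k+1}\ge -G_{i\ell}(\mathbf{y}^{k+1})=|G_{i\ell}(\mathbf{y}^{k+1})|$ from the first argument of the $\max$; if it is nonnegative, use the second argument together with the nonnegativity of $\mathbf{q}^k$ already proved, giving $q_{i\ell}^{k+1}\ge q_{i\ell}^k+G_{i\ell}(\mathbf{y}^{k+1})\ge G_{i\ell}(\mathbf{y}^{k+1})=|G_{i\ell}(\mathbf{y}^{k+1})|$.

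There is no real obstacle here: the lemma is essentially a bookkeeping fact about the virtual-queue recursion, and the only subtlety is noticing that the two bounds in \eqref{eq:qkplus1normlargethang} go in opposite directions and should therefore be handled by two different arguments (one purely from the initial $\max$, the other from the running $\max$ together with $\mathbf{q}^k\ge\mathbf{0}$). The componentwise viewpoint throughout is what makes the argument clean.
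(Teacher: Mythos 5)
Your proof is correct. The paper does not prove this lemma itself but simply defers to \cite{YuHao2017} (Lemmas 3 and 4 there), and your componentwise argument --- nonnegativity by induction using $\max\{a,b\}\ge\tfrac{a+b}{2}$, the upper bound on $\|\mathbf{q}^0\|$ from the initialization, and the lower bound $q_{i\ell}^{k+1}\ge|G_{i\ell}(\mathbf{y}^{k+1})|$ via the case split on the sign of $G_{i\ell}(\mathbf{y}^{k+1})$ --- is exactly the standard virtual-queue argument underlying that citation, so you have supplied a correct, self-contained version of essentially the same proof.
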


\begin{proof}
It follows from \cite[Lemma~3 and Lemma~4] {YuHao2017}.
\end{proof}

\begin{lemma}\label{lemma:boundedsumF}
Suppose all the conditions in Theorem~\ref{theo:feasibility} hold. If there is some $K\ge1$ such that $\|\mathbf{q}^{k}\|\le 2(\|\bm{\lambda}^\star\|+\sqrt{C})$ $\forall k=0,1, 2,\ldots, K-1$, then,
\begin{equation}\label{eq:sumPhiykystarupperbound}
	\sum_{k=1}^{K} (\mathbf{f}(\mathbf{y}^{k})-\mathbf{f}(\mathbf{y}^\star)) \le S^{0}-S^{K},
\end{equation}
where $S^k=\frac{1}{2\rho}\|\mathbf{z}^{k}-\mathbf{z}^\star\|_{H^\dag}^2+\frac{\rho}{2}\|\mathbf{u}^{k}\|_W^2+\frac{1}{2}\|\mathbf{y}^{k}-\mathbf{y}^\star\|^2_{\frac{I}{\gamma}-\frac{\mathbf{B^T}\mathbf{B}}{\rho}}+\frac{1}{2}\|\mathbf{q}^{k}\|^2-\frac{1}{2}\|\mathbf{G}(\mathbf{y}^{k})\|^2$ $\forall k\ge 0$. 
\end{lemma}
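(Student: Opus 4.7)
The plan is to prove a per-iteration descent
$\mathbf{f}(\mathbf{y}^{k+1})-\mathbf{f}(\mathbf{y}^\star)\le S^k-S^{k+1}$
for every $k\in\{0,\ldots,K-1\}$ under the running hypothesis, and then sum from $k=0$ to $K-1$ to telescope $S^k$ into \eqref{eq:sumPhiykystarupperbound}. The four ingredients will be: a projected-gradient three-point inequality for \eqref{eq:projofyk} applied to the convex smooth surrogate $\mathcal{R}^k$; convexity of $\mathbf{f}$ and $\mathbf{G}$ to pass from $\mathcal{R}^k(\mathbf{y}^\star)$ to $\mathbf{f}(\mathbf{y}^\star)$; the dual recursions \eqref{eq:updateofuk}--\eqref{eq:updateofzk} to telescope the $\mathbf{u},\mathbf{z}$ cross terms in the dual-P-EXTRA style of \cite{wu2019improved}; and the virtual-queue drift for $\mathbf{q}$.

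First, I note that $\mathbf{q}^k+\mathbf{G}(\mathbf{y}^k)\ge \mathbf{0}$ componentwise for every $k\ge 0$ (an easy induction from $\mathbf{q}^0=\max\{-\mathbf{G}(\mathbf{y}^0),\mathbf{0}\}$ and \eqref{eq:updateofqk}), so $\mathcal{R}^k$ is convex on $Y$ and, by Lemma~\ref{lemma:lipschitzofR}, is $L_{\mathcal{R}^k}$-smooth. The standard projected-gradient three-point inequality applied to \eqref{eq:projofyk} with anchor $\mathbf{y}^\star$ yields $\mathcal{R}^k(\mathbf{y}^{k+1})-\mathcal{R}^k(\mathbf{y}^\star)\le \tfrac{1}{2\gamma}\|\mathbf{y}^k-\mathbf{y}^\star\|^2-\tfrac{1}{2\gamma}\|\mathbf{y}^{k+1}-\mathbf{y}^\star\|^2-\bigl(\tfrac{1}{2\gamma}-\tfrac{L_{\mathcal{R}^k}}{2}\bigr)\|\mathbf{y}^{k+1}-\mathbf{y}^k\|^2$. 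Expanding both sides via \eqref{eq:defineofRt} and using $\mathbf{G}(\mathbf{y}^\star)\le \mathbf{0}$, $\mathbf{B}\mathbf{y}^\star-\mathbf{c}=\mathbf{z}^\star$, and the nonnegativity of $\mathbf{q}^k+\mathbf{G}(\mathbf{y}^k)$ isolates $\mathbf{f}(\mathbf{y}^{k+1})-\mathbf{f}(\mathbf{y}^\star)$ on the left, against cross terms $\langle W\mathbf{u}^k-\tfrac{1}{\rho}\mathbf{z}^k,\mathbf{B}\mathbf{y}^{k+1}-\mathbf{c}-\mathbf{z}^\star\rangle$, $\tfrac{1}{2\rho}\|\mathbf{B}\mathbf{y}^{k+1}-\mathbf{c}\|^2-\tfrac{1}{2\rho}\|\mathbf{z}^\star\|^2$, and $\langle\mathbf{q}^k+\mathbf{G}(\mathbf{y}^k),\mathbf{G}(\mathbf{y}^{k+1})\rangle$ on the right.

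To dispose of the $\mathbf{u},\mathbf{z}$ cross terms I substitute $\mathbf{B}\mathbf{y}^{k+1}-\mathbf{c}-\mathbf{z}^\star=\rho(\mathbf{u}^{k+1}-W\mathbf{u}^k)+(\mathbf{z}^k-\mathbf{z}^\star)$ from \eqref{eq:updateofuk} and $\mathbf{z}^{k+1}-\mathbf{z}^k=\rho H\mathbf{u}^{k+1}$ from \eqref{eq:updateofzk}, together with $H\mathbf{u}^\star=\mathbf{0}$ and $\mathbf{z}^\star\in\operatorname{Range}(H)$ (the latter follows from $(\mathbf{1}_n\otimes I)^T\mathbf{z}^\star=\mathbf{0}$ plus $\operatorname{Null}(P^H)=\operatorname{span}(\mathbf{1}_n)$). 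Following the dual-P-EXTRA calculus of \cite{wu2019improved}, these cross terms collapse into the telescoping combination $\tfrac{1}{2\rho}(\|\mathbf{z}^k-\mathbf{z}^\star\|_{H^\dag}^2-\|\mathbf{z}^{k+1}-\mathbf{z}^\star\|_{H^\dag}^2)+\tfrac{\rho}{2}(\|\mathbf{u}^k\|_W^2-\|\mathbf{u}^{k+1}\|_W^2)$, modulo a $\|\mathbf{B}(\mathbf{y}^{k+1}-\mathbf{y}^k)\|^2$-type residual already subsumed by the $\tfrac{1}{\rho}\|\mathbf{B}^T\mathbf{B}\|$ inside $L_{\mathcal{R}^k}$. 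For the $\mathbf{q}$-part, the virtual-queue facts $\mathbf{q}^{k+1}\ge\mathbf{q}^k+\mathbf{G}(\mathbf{y}^{k+1})$ and $\mathbf{q}^{k+1}\ge\mathbf{0}$ coming from \eqref{eq:updateofqk}, combined with the polarization identity $\langle\mathbf{G}(\mathbf{y}^k),\mathbf{G}(\mathbf{y}^{k+1})\rangle=\tfrac12\|\mathbf{G}(\mathbf{y}^k)\|^2+\tfrac12\|\mathbf{G}(\mathbf{y}^{k+1})\|^2-\tfrac12\|\mathbf{G}(\mathbf{y}^{k+1})-\mathbf{G}(\mathbf{y}^k)\|^2$, yield the telescoping $\tfrac12(\|\mathbf{q}^k\|^2-\|\mathbf{q}^{k+1}\|^2)-\tfrac12(\|\mathbf{G}(\mathbf{y}^k)\|^2-\|\mathbf{G}(\mathbf{y}^{k+1})\|^2)$ at the cost of an extra $\tfrac{\tilde\beta^2}{2}\|\mathbf{y}^{k+1}-\mathbf{y}^k\|^2$ residual via Proposition~\ref{pro:propertyofproby}(\ref{pro:Gycontinuous}).

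The hard part, and the exact point where the hypothesis $\|\mathbf{q}^k\|\le 2(\|\bm{\lambda}^\star\|+\sqrt{C})$ is used, is to absorb every remaining $\|\mathbf{y}^{k+1}-\mathbf{y}^k\|^2$ residual into the slack $(\tfrac{1}{2\gamma}-\tfrac{L_{\mathcal{R}^k}}{2})\|\mathbf{y}^{k+1}-\mathbf{y}^k\|^2$ from the projected-gradient step. A short accounting shows this amounts to $\tfrac{1}{\gamma}\ge L_{\mathcal{R}^k}+\tilde\beta^2$. Now Lemma~\ref{lemma:qkproperties} gives $\|\mathbf{G}(\mathbf{y}^k)\|\le\|\mathbf{q}^k\|$ for $k\ge 1$ (while the $k=0$ contribution is folded into $C$ through $\tfrac12\|\mathbf{G}(\mathbf{y}^0)\|^2+\tfrac12\|\mathbf{q}^0\|^2$), and $\|v\|_1\le\sqrt{np}\|v\|$ delivers $\langle\mathbf{q}^k+\mathbf{G}(\mathbf{y}^k),L_g\mathbf{1}_{np}\rangle\le 2\sqrt{np}L_g\|\mathbf{q}^k\|\le 4\sqrt{np}L_g(\|\bm{\lambda}^\star\|+\sqrt{C})$ under the hypothesis. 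Combined with the definition $L_{\mathcal{R}^k}=L_F+\langle\mathbf{q}^k+\mathbf{G}(\mathbf{y}^k),L_g\mathbf{1}_{np}\rangle+\tfrac{1}{\rho}\|\mathbf{B}^T\mathbf{B}\|$, the requirement $\tfrac{1}{\gamma}\ge L_{\mathcal{R}^k}+\tilde\beta^2$ reduces precisely to the step-size condition \eqref{eq:parametergamma}. Summing the one-step descent so obtained for $k=0,\ldots,K-1$ telescopes into \eqref{eq:sumPhiykystarupperbound}.
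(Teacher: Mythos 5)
Your overall strategy is the same as the paper's: establish the one-step descent $\mathbf{f}(\mathbf{y}^{k+1})-\mathbf{f}(\mathbf{y}^\star)\le S^k-S^{k+1}$ for $k=0,\ldots,K-1$ from a three-point inequality for the projected-gradient step \eqref{eq:projofyk} applied to $\mathcal{R}^k$, telescope the $\mathbf{u},\mathbf{z},\mathbf{q}$ contributions, and absorb the $\|\mathbf{y}^{k+1}-\mathbf{y}^k\|^2$ residuals under $\frac{1}{\gamma}\ge L_{\mathcal{R}^k}+\tilde\beta^2$, which the hypothesis on $\|\mathbf{q}^k\|$ together with $\|\mathbf{G}(\mathbf{y}^k)\|\le\|\mathbf{q}^k\|$ (and $\|\mathbf{G}(\mathbf{y}^0)\|\le 2\sqrt{C}$ for $k=0$) reduces to \eqref{eq:parametergamma}. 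That endgame, and your handling of the $\mathbf{q}$-part via the virtual-queue inequalities plus the polarization identity at the cost of $\frac{\tilde\beta^2}{2}\|\mathbf{y}^{k+1}-\mathbf{y}^k\|^2$, match the paper exactly.

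There is, however, a concrete gap in your accounting of the quadratic penalty $\frac{1}{2\rho}\|\mathbf{B}\mathbf{y}-\mathbf{c}\|^2$. The residual left over by the dual telescoping is not of $\|\mathbf{B}(\mathbf{y}^{k+1}-\mathbf{y}^k)\|^2$ type: substituting \eqref{eq:updateofuk} and completing the square in the quadratic $M_1^k(\mathbf{y})=\langle W\mathbf{u}^{k}-\frac{1}{\rho}\mathbf{z}^{k}, \mathbf{B}\mathbf{y}-\mathbf{c}\rangle+\frac{1}{2\rho}\|\mathbf{B}\mathbf{y}-\mathbf{c}\|^2$ gives exactly $M_1^k(\mathbf{y}^\star)-M_1^k(\mathbf{y}^{k+1})=\langle\mathbf{u}^{k+1},\mathbf{z}^\star-(\mathbf{B}\mathbf{y}^{k+1}-\mathbf{c})\rangle+\frac{1}{2\rho}\|\mathbf{B}(\mathbf{y}^{k+1}-\mathbf{y}^\star)\|^2$, so the leftover is $\frac{1}{2\rho}\|\mathbf{B}(\mathbf{y}^{k+1}-\mathbf{y}^\star)\|^2$, anchored at the optimum. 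It cannot be absorbed into the slack $(\frac{1}{2\gamma}-\frac{L_{\mathcal{R}^k}}{2})\|\mathbf{y}^{k+1}-\mathbf{y}^k\|^2$; it is meant to cancel against the $-\frac{1}{2\rho}\|\mathbf{B}(\mathbf{y}^{k+1}-\mathbf{y}^\star)\|^2$ hidden in $S^{k+1}$ through the weight $\frac{I}{\gamma}-\frac{\mathbf{B}^T\mathbf{B}}{\rho}$. But the telescoping then also requires a matching $-\frac{1}{2\rho}\|\mathbf{B}(\mathbf{y}^{k}-\mathbf{y}^\star)\|^2$ contributed at index $k$, and your argument never produces it: plain convexity of $\mathcal{R}^k$, which is what your three-point inequality invokes, leaves an uncancelled $+\frac{1}{2\rho}\|\mathbf{B}(\mathbf{y}^{k}-\mathbf{y}^\star)\|^2$ on the right-hand side, a term that neither telescopes nor is small. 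The paper avoids this by lower-bounding $\mathcal{R}^k(\mathbf{y}^\star)$ via the convexity of $\mathcal{R}^k(\mathbf{y})-\frac{1}{2\rho}\|\mathbf{B}\mathbf{y}-\mathbf{c}\|^2$, i.e., linearizing only the convex remainder while keeping the quadratic exact, which supplies precisely the missing $-\frac{1}{2\rho}\|\mathbf{B}\mathbf{y}^k-\mathbf{B}\mathbf{y}^\star\|^2$. Your proof needs this modification; everything else goes through.
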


\begin{proof}
To prove \eqref{eq:sumPhiykystarupperbound}, we first bound $\mathbf{f}(\mathbf{y}^{k+1})-\mathbf{f}(\mathbf{y}^\star)$ $\forall k\ge0$. Recall from Section~\ref{sec:algdevelop} that $\mathbf{d}^{k}$ is the gradient of $\mathcal{R}^k(\mathbf{y})$ at $\mathbf{y}^k$. Thus, \eqref{eq:projofyk} can be rewritten as $\mathbf{y}^{k+1}=\operatorname{arg\;min}_{\mathbf{y} \in  Y}M^k(\mathbf{y})$, where $M^k(\mathbf{y})=\mathcal{R}^k(\mathbf{y}^{k})+\nabla \mathcal{R}^k(\mathbf{y}^{k})^{T}(\mathbf{y}-\mathbf{y}^{k})+\frac{1}{2 \gamma}\|\mathbf{y}-\mathbf{y}^{k}\|^{2}$. Since $M^k(\mathbf{y})$ is $\frac{1}{\gamma}$-strongly convex and $\mathbf{y}^{k+1}$ minimizes $M^k(\mathbf{y})$, we have $M^k(\mathbf{y}^\star)\ge M^k(\mathbf{y}^{k+1}) +\frac{1}{2\gamma}\|\mathbf{y}^{k+1}-\mathbf{y}^{\star}\|^2$, which is equivalent to
\begin{align}
	\langle \nabla\mathcal{R}^k(\mathbf{y}^{k}), \mathbf{y}^{k+1}-\mathbf{y}^\star\rangle&\le \frac{1}{2\gamma}(\|\mathbf{y}^k-\mathbf{y}^{\star}\|^2-\|\mathbf{y}^{k+1}-\mathbf{y}^\star\|^2)\nonumber\displaybreak[0]\\
	&-\frac{1}{2\gamma}\|\mathbf{y}^{k+1}-\mathbf{y}^{k}\|^2.\label{eq:substiexpreoftildeMk}
\end{align} 
On the other hand, from Lemma~\ref{lemma:lipschitzofR},
\begin{align}
	\mathcal{R}^k(\mathbf{y}^{k+1})\le &\mathcal{R}^k(\mathbf{y}^{k})+\langle \nabla \mathcal{R}^k (\mathbf{y}^{k}), \mathbf{y}^{k+1}-\mathbf{y}^{k}\rangle\nonumber\displaybreak[0]\\
	&+\frac{L_{\mathcal{R}^k}}{2}\|\mathbf{y}^{k+1}-\mathbf{y}^{k}\|^2.\label{eq:usesmoothofR}
\end{align}
In addition, due to the convexity of $\mathcal{R}^k(\mathbf{y})-\frac{1}{2\rho}\|\mathbf{B}\mathbf{y}-\mathbf{c}\|^2$, we obtain
\begin{align*}
	&-\mathcal{R}^k(\mathbf{y}^\star)+\frac{1}{2\rho}\|\mathbf{B}\mathbf{y}^\star-\mathbf{c}\|^2\le -\mathcal{R}^k(\mathbf{y}^{k})+\frac{1}{2\rho}\|\mathbf{B}\mathbf{y}^{k}-\mathbf{c}\|^2\displaybreak[0]\\
	&+\langle \nabla\mathcal{R}^k (\mathbf{y}^{k})-\frac{1}{\rho}\mathbf{B}^{T}(\mathbf{B}\mathbf{y}^{k}-\mathbf{c}), \mathbf{y}^{k}-\mathbf{y}^\star\rangle.
\end{align*}
By adding the above inequality and \eqref{eq:usesmoothofR}, we have 
\begin{align*}%\label{eq:RyRystarbdwithsmooth}
	&\mathcal{R}^k(\mathbf{y}^{k+1}) -\mathcal{R}^k(\mathbf{y}^\star)
	\le\langle \nabla\mathcal{R}^k(\mathbf{y}^{k}), \mathbf{y}^{k+1}\!-\!\mathbf{y}^\star\rangle\nonumber\displaybreak[0]\\
	&+\frac{L_{\mathcal{R}^k}}{2}\|\mathbf{y}^{k+1}-\mathbf{y}^{k}\|^2-\frac{1}{2\rho}\|\mathbf{B}\mathbf{y}^k-\mathbf{B}\mathbf{y}^\star\|^2.
\end{align*}
Let $M_1^k(\mathbf{y})=\langle W\mathbf{u}^{k}\!-\frac{1}{\rho}\mathbf{z}^{k}, \mathbf{B}\mathbf{y}\!-\!\mathbf{c}\rangle+\frac{1}{2\rho}\|\mathbf{B}\mathbf{y}-\mathbf{c}\|^2\!$, $
M_2^k(\mathbf{y})=\langle \mathbf{q}^{k}+\mathbf{G}(\mathbf{y}^{k}), \mathbf{G}(\mathbf{y})\rangle$ and we have $\mathcal{R}^k(\mathbf{y})=\mathbf{f}(\mathbf{y})+M_1^k(\mathbf{y})+M_2^k(\mathbf{y})$. Substituting this definition into the combination of the above inequality and \eqref{eq:substiexpreoftildeMk} yields
\begin{align}
	&\mathbf{f}(\mathbf{y}^{k+1})-\!\mathbf{f}(\mathbf{y}^\star)\nonumber\displaybreak[0]\\
	&\le M_1^k(\mathbf{y}^\star)-M_1^k(\mathbf{y}^{k+1})+M_2^k(\mathbf{y}^\star)-M_2^k(\mathbf{y}^{k+1})\nonumber\displaybreak[0]\\
	&+\!(\frac{L_{\mathcal{R}^k}}{2}\!-\frac{1}{2\gamma}\!)\|\mathbf{y}^{k+1}\!\!-\!\mathbf{y}^{k}\|^2\!-\!\frac{1}{2\rho}\|\mathbf{B}\mathbf{y}^k-\mathbf{B}\mathbf{y}^\star\|^2\nonumber\displaybreak[0]\\
	&+\frac{1}{2\gamma}(\|\mathbf{y}^k\!-\!\mathbf{y}^{\star}\|^2-\|\mathbf{y}^{k+1}\!\!-\mathbf{y}^\star\|^2).\label{eq:fyfstarbound}
\end{align}
By referring to the proof of \cite[Lemma 2] {wu2022distributed}, we are able to derive the bounds on $M_1^k(\mathbf{y}^\star)-M_1^k(\mathbf{y}^{k+1})$ and $M_2^k(\mathbf{y}^\star)-M_2^k(\mathbf{y}^{k+1})$ in \eqref{eq:fyfstarbound} as follows:
\begin{align*}
	&M_1^k(\mathbf{y}^\star)-M_1^k(\mathbf{y}^{k+1})\displaybreak[0]\\
	&\le\frac{1}{2\rho}(\|\mathbf{z}^\star-\mathbf{z}^{k}\|_{H^\dag}^2-\|\mathbf{z}^\star-\mathbf{z}^{k+1}\|_{H^\dag}^2)\displaybreak[0]\\
	&+\frac{\rho}{2}(\|\mathbf{u}^{k}\|_W^2-\|\mathbf{u}^{k+1}\|_W^2)+\frac{1}{2\rho}\|\mathbf{B}\mathbf{y}^\star-\mathbf{B}\mathbf{y}^{k+1}\|^2,\displaybreak[0]\\
	&M_2^k(\mathbf{y}^\star)\!-\!M_2^k(\mathbf{y}^{k+1})\displaybreak[0]\\
	&\le\frac{1}{2}(\|\mathbf{q}^{k}\|^2\!-\!\|\mathbf{q}^{k+1}\|^2)+\!\frac{1}{2}(\|\mathbf{G}(\mathbf{y}^{k+1})\|^2-\|\mathbf{G}(\mathbf{y}^{k})\|^2)\displaybreak[0]\\
	&+\frac{\tilde{\beta}^2}{2}\|\mathbf{y}^{k+1}-\mathbf{y}^{k}\|^2.
\end{align*}
Substituting the above two inequalities into \eqref{eq:fyfstarbound} gives
\begin{align}
	&\mathbf{f}(\mathbf{y}^{k+1})-\mathbf{f}(\mathbf{y}^\star) \le S^k-S^{k+1}\nonumber\displaybreak[0]\\
	&+\frac{1}{2}(\tilde{\beta}^2+L_{\mathcal{R}^k}-\frac{1}{\gamma})\|\mathbf{y}^{k+1}-\mathbf{y}^{k}\|^2.\label{eq: fyk1fystarbound}
\end{align}
Next, note from Lemma~\ref{lemma:lipschitzofR} that
\begin{align*}
	&L_{\mathcal{R}^k}\le L_F+\frac{1}{\rho}\|\mathbf{B}^{T}\mathbf{B}\|+\sqrt{np}(\|\mathbf{q}^{k}\|+\|\mathbf{G}(\mathbf{y}^{k})\|)L_g\displaybreak[0]\\
	&\le L_F+\frac{1}{\rho}\|\mathbf{B}^{T}\mathbf{B}\|+2\sqrt{np}\max\{\|\mathbf{q}^{k}\|,\|\mathbf{G}(\mathbf{y}^{k})\|\}L_g.
\end{align*}
Due to \eqref{eq:qkplus1normlargethang}, $\|\mathbf{G}(\mathbf{y}^{0})\|\le 2(\|\bm{\lambda}^\star\|+\sqrt{C})$, and the hypothesis of Lemma~\ref{lemma:boundedsumF}, we have 
\begin{align*}
	L_{\mathcal{R}^k}\le L_F+\frac{1}{\rho}\|\mathbf{B}^{T}\mathbf{B}\|+ 4\sqrt{np}(\|\bm{\lambda}^\star\|+\sqrt{C})L_g,
\end{align*}
for each $k=0,1,\ldots,K-1$. This, along with Lemma~\ref{lemma:lipschitzofR} and the parameter condition in Theorem~\ref{theo:feasibility}, gives $\frac{1}{\gamma}\ge\tilde{\beta}^2+L_{\mathcal{R}^k}$ $\forall k=0,\ldots,K-1$. It then follows from \eqref{eq: fyk1fystarbound} that $\mathbf{f}(\mathbf{y}^{k+1})-\mathbf{f}(\mathbf{y}^\star)\le S^k-S^{k+1}$ $\forall k=0,1,\ldots, K-1$, leading to \eqref{eq:sumPhiykystarupperbound}.	  
\end{proof}

Based on the above lemmas, we provide the bounds on $\|\mathbf{q}^{k}\|$ and $\|\mathbf{u}^{k}-\mathbf{u}^{0}\|_W$ using mathematical induction. 

\begin{lemma}\label{lemma:qkuku0wbound}
Suppose all conditions in Theorem~\ref{theo:feasibility} hold. For each $k\ge 0$,
\begin{align}
	&\|\mathbf{q}^{k}\|\le 2(\|\bm{\lambda}^\star\|+\sqrt{C}),\label{eq:qbound}\\
	&\|\mathbf{u}^{k}-\mathbf{u}^{0}\|_W\le\|\mathbf{u}^{0}\|_W+\|\mathbf{u}^\star\|_W+\sqrt{\frac{2C}{\rho}}.\label{eq:ubound}
\end{align}
\end{lemma}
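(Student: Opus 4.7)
The plan is to prove \eqref{eq:qbound} and \eqref{eq:ubound} simultaneously by strong induction on $k$. For the base case $k=0$, note that Lemma~\ref{lemma:qkproperties} gives $\|\mathbf q^0\|\le\|\mathbf G(\mathbf y^0)\|$, and since $\tfrac12\|\mathbf G(\mathbf y^0)\|^2$ is a non-negative summand of $C$, one has $\|\mathbf q^0\|\le\sqrt{2C}\le 2(\|\bm\lambda^\star\|+\sqrt C)$; the second bound is immediate since $\|\mathbf u^0-\mathbf u^0\|_W=0$.

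For the inductive step, suppose both bounds hold for $k=0,\ldots,K-1$. The hypothesis on $\|\mathbf q^k\|$ is exactly what Lemma~\ref{lemma:boundedsumF} requires, so $\sum_{k=1}^K(\mathbf f(\mathbf y^k)-\mathbf f(\mathbf y^\star))\le S^0-S^K$. Combining this with Jensen's inequality (using convexity of $\mathbf f$ on $Y$) gives $K(\mathbf f(\bar{\mathbf y}^K)-\mathbf f(\mathbf y^\star))\le S^0-S^K$. The Lagrangian lower bound $\mathbf f(\bar{\mathbf y}^K)-\mathbf f(\mathbf y^\star)\ge-\langle\bm\lambda^\star,\mathbf G(\bar{\mathbf y}^K)\rangle-\langle\mathbf u^\star,\mathbf B\bar{\mathbf y}^K-\mathbf c\rangle$, which follows from strong duality (Proposition~\ref{pro:propertyofproby}(\ref{pro:equivstrongdual})) and KKT stationarity at $\mathbf y^\star$, converts the objective difference into constraint pairings. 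Invoking Lemma~\ref{lemma:GybarBybarboundk} and $\bm\lambda^\star\ge\mathbf 0$ (together with $K\mathbf G(\bar{\mathbf y}^K)\le\mathbf q^K$) transforms the inequality pairing into $K\langle\bm\lambda^\star,\mathbf G(\bar{\mathbf y}^K)\rangle\le\|\bm\lambda^\star\|\|\mathbf q^K\|$; and since $\mathbf u^\star=\mathbf 1_n\otimes u^\star$ satisfies $W\mathbf u^\star=\mathbf u^\star$, the equality pairing reads $K\langle\mathbf u^\star,\mathbf B\bar{\mathbf y}^K-\mathbf c\rangle=\rho\langle\mathbf u^\star,\mathbf u^K-\mathbf u^0\rangle$, whose absolute value is controlled by Cauchy-Schwarz in the $W$-seminorm by $\rho\|\mathbf u^\star\|_W\|\mathbf u^K-\mathbf u^0\|_W$. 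Assembling these ingredients delivers the \emph{master inequality}
\begin{equation*}
S^K\le S^0+\|\bm\lambda^\star\|\|\mathbf q^K\|+\rho\|\mathbf u^\star\|_W\|\mathbf u^K-\mathbf u^0\|_W.
\end{equation*}

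It remains to extract \eqref{eq:qbound} and \eqref{eq:ubound} from this inequality. The parameter choice in Theorem~\ref{theo:feasibility} forces $I/\gamma\succeq\mathbf B^T\mathbf B/\rho$, so together with Lemma~\ref{lemma:qkproperties} each of the four summands comprising $S^K$ is non-negative; in particular $\tfrac\rho2\|\mathbf u^K\|_W^2\le S^K$ and $\tfrac12(\|\mathbf q^K\|^2-\|\mathbf G(\mathbf y^K)\|^2)\le S^K$. Using the triangle inequality $\|\mathbf u^K-\mathbf u^0\|_W\le\|\mathbf u^K\|_W+\|\mathbf u^0\|_W$ and the explicit algebraic identity $C-S^0=\|\bm\lambda^\star\|^2+\tfrac\rho2(2\|\mathbf u^0\|_W\|\mathbf u^\star\|_W+\|\mathbf u^\star\|_W^2)+\|\mathbf G(\mathbf y^0)\|^2+\tfrac12\|\mathbf G(\mathbf y^\star)\|^2$, I will then apply Young's inequalities of the form $\|\bm\lambda^\star\|\|\mathbf q^K\|\le\tfrac14\|\mathbf q^K\|^2+\|\bm\lambda^\star\|^2$ and $\rho\|\mathbf u^\star\|_W\|\mathbf u^K-\mathbf u^0\|_W\le\tfrac\rho4(\|\mathbf u^K-\mathbf u^0\|_W)^2+\rho\|\mathbf u^\star\|_W^2$ to decouple the cross terms and close a quadratic system in the two unknowns $\|\mathbf q^K\|$ and $\|\mathbf u^K-\mathbf u^0\|_W$, producing exactly the factors $2$ and $\sqrt{2C/\rho}$ in the target bounds. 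The main obstacle is this last step: the lower bound $\tfrac12(\|\mathbf q^K\|^2-\|\mathbf G(\mathbf y^K)\|^2)\le S^K$ does not by itself control $\|\mathbf q^K\|$, because $\|\mathbf G(\mathbf y^K)\|$ can be as large as $\|\mathbf q^K\|$, so the Young weights must be calibrated so that the residual quadratic in $\|\mathbf q^K\|$ survives with a strictly positive coefficient, and so that the additive remainder matches the specific composition of $C-S^0$ to give the precise constants in \eqref{eq:qbound}--\eqref{eq:ubound} rather than looser $O(1)$ factors.
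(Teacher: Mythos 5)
Your overall architecture — induction on $k$, base case from Lemma~\ref{lemma:qkproperties} and the $\tfrac12\|\mathbf G(\mathbf y^0)\|^2$ term in $C$, invocation of Lemma~\ref{lemma:boundedsumF} under the induction hypothesis, and the duality/telescoping argument yielding the master inequality $S^K\le S^0+\|\bm\lambda^\star\|\|\mathbf q^K\|+\rho\|\mathbf u^\star\|_W\|\mathbf u^K-\mathbf u^0\|_W$ — matches the paper's proof. But the step you yourself flag as "the main obstacle" is a genuine gap, and it cannot be closed by calibrating Young weights on the upper-bound side. The difficulty is that your lower bound on $S^K$ contains \emph{no} positive multiple of $\|\mathbf q^K\|^2$: the summand $\tfrac12(\|\mathbf q^K\|^2-\|\mathbf G(\mathbf y^K)\|^2)$ can vanish identically, so no rearrangement of the cross term $\|\bm\lambda^\star\|\|\mathbf q^K\|$ on the right produces a surviving quadratic in $\|\mathbf q^K\|$ on the left. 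The missing ingredient is a separate estimate tying $\|\mathbf G(\mathbf y^K)\|$ back to $\|\mathbf q^K\|$ with a coefficient strictly below $1$: the paper shows, via the structure of the update \eqref{eq:updateofqk} and the Lipschitz continuity of $\mathbf G$ in Proposition~\ref{pro:propertyofproby}(\ref{pro:Gycontinuous}), that $\|\mathbf G(\mathbf y^K)\|^2\le\tfrac12\|\mathbf q^K\|^2+\tilde\beta^2\|\mathbf y^K-\mathbf y^\star\|^2+\|\mathbf G(\mathbf y^\star)\|^2$, and then uses the fact that \eqref{eq:parametergamma} provides the \emph{extra} margin $\tfrac1\gamma\ge\tilde\beta^2+\tfrac1\rho\|\mathbf B^T\mathbf B\|$ (not merely $I/\gamma\succeq\mathbf B^T\mathbf B/\rho$, which is all you extract) so that the term $\tilde\beta^2\|\mathbf y^K-\mathbf y^\star\|^2$ is absorbed by $\|\mathbf y^K-\mathbf y^\star\|^2_{I/\gamma-\mathbf B^T\mathbf B/\rho}$. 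This yields $S^K\ge\tfrac\rho2\|\mathbf u^K\|_W^2+\tfrac14\|\mathbf q^K\|^2-\tfrac12\|\mathbf G(\mathbf y^\star)\|^2$, which is the coercive lower bound your argument lacks.

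A secondary problem: even granting that lower bound, the specific Young inequalities you propose, $\|\bm\lambda^\star\|\|\mathbf q^K\|\le\tfrac14\|\mathbf q^K\|^2+\|\bm\lambda^\star\|^2$ and $\rho\|\mathbf u^\star\|_W\|\mathbf u^K-\mathbf u^0\|_W\le\tfrac\rho4\|\mathbf u^K-\mathbf u^0\|_W^2+\rho\|\mathbf u^\star\|_W^2$, exactly cancel the available quadratics $\tfrac14\|\mathbf q^K\|^2$ and (after the triangle inequality) $\tfrac\rho2\|\mathbf u^K\|_W^2$, again leaving nothing to bound. The paper instead completes the square exactly, obtaining $\tfrac\rho2(\|\mathbf u^K\|_W-\|\mathbf u^\star\|_W)^2+\tfrac14(\|\mathbf q^K\|-2\|\bm\lambda^\star\|)^2\le C$ (your identity for $C-S^0$ is correct and is what makes the right-hand side collapse to $C$), from which both target bounds and their precise constants follow immediately. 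You should replace the Young-inequality plan with this completion of squares and supply the $\|\mathbf G(\mathbf y^K)\|^2$ estimate; the rest of your proposal is sound.
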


\begin{proof}
We first prove \eqref{eq:qbound} for all $k\ge0$ by induction. Due to \eqref{eq:qkplus1normlargethang}, $\|\mathbf{q}^{0}\| \le \|\mathbf{G}(\mathbf{y}^{0})\| \le 2\sqrt{C}$, so that \eqref{eq:qbound} is satisfied when $k=0$. Next, given $K\ge1$, suppose \eqref{eq:qbound} holds for all $k=0,1,\ldots,K-1$, and below we show \eqref{eq:qbound} holds for $k=K$.

To do so, note from Proposition~\ref{pro:propertyofproby}(\ref{pro:equivstrongdual}) that $\mathbf{f}(\mathbf{y}^\star)= \operatorname{\min}_{ \mathbf{y} \in  Y}~\mathbf{f}(\mathbf{y})+\langle \bm{\lambda}^\star,\mathbf{G}(\mathbf{y})\rangle+\langle \mathbf{u}^\star, \mathbf{B}\mathbf{y}-\mathbf{c}\rangle$, implying that $\mathbf{f}(\mathbf{y}^\star)\le\mathbf{f}(\mathbf{y}^{k})+\langle \bm{\lambda}^\star,\mathbf{G}(\mathbf{y}^{k})\rangle+\langle \mathbf{u}^\star, \mathbf{B}\mathbf{y}^{k}-\mathbf{c}\rangle$ $\forall k\ge 1$. Adding this inequality over $k=1,\ldots,K$ yields
\begin{align}
	&\sum_{k=1}^{K}(\mathbf{f}(\mathbf{y}^\star)-\mathbf{f}(\mathbf{y}^{k}))\nonumber\displaybreak[0]\\
	&\le\langle \bm{\lambda}^\star,\sum_{k=1}^{K}\mathbf{G}(\mathbf{y}^{k}\rangle+\langle \mathbf{u}^\star, \sum_{k=1}^{K}(\mathbf{B}\mathbf{y}^{k}-\mathbf{c})\rangle.\label{eq:sumfstarfykupperbound}
\end{align}
By referring to the proof of \cite[eq.(40), eq.(41)] {wu2022distributed}, we can show that $\sum_{k=1}^{K} \mathbf{G}(\mathbf{y}^{k})\le \mathbf{q}^{K}$ and $\sum_{k=1}^{K}(\mathbf{1}_n\otimes I_{m+p})^T(\mathbf{B}\mathbf{y}^{k}-\mathbf{c})=\rho(\mathbf{1}_n\otimes I_{m+p})^T(\mathbf{u}^{K}-\mathbf{u}^{0})$. Also note that $\bm{\lambda}^\star\ge\mathbf{0}$ and $\mathbf{u}^\star\in \operatorname{Range}(\mathbf{1}_n\otimes I_{m+p})$. It follows from \eqref{eq:sumfstarfykupperbound} that
\begin{equation*}%\label{eq:sumPhiystaryk1upperbound}
	\sum_{k=1}^{K}(\mathbf{f}(\mathbf{y}^\star)-\mathbf{f}(\mathbf{y}^{k}))\le \langle \bm{\lambda}^\star,\mathbf{q}^{K}\rangle+\rho \langle \mathbf{u}^\star, \mathbf{u}^{K}-\mathbf{u}^{0}\rangle.
\end{equation*}
By combining the above inequality with \eqref{eq:sumPhiykystarupperbound} and by using $W\mathbf{u}^\star = \mathbf{u}^\star$ and $\langle W\mathbf{u}^\star, \mathbf{u}^{K}-\mathbf{u}^{0}\rangle =\langle W^{\frac{1}{2}}\mathbf{u}^\star, W^{\frac{1}{2}}(\mathbf{u}^{K}-\mathbf{u}^{0})\rangle$,
\begin{align}
	&S^{K}\le S^{0}\!+\!\langle \bm{\lambda}^\star,\mathbf{q}^{K}\rangle+\rho\langle W\mathbf{u}^\star, \mathbf{u}^{K}-\mathbf{u}^{0}\rangle\nonumber\\
	&\le S^{0}+\|\bm{\lambda}^\star\|\cdot\|\mathbf{q}^{K}\|+\rho\|\mathbf{u}^\star\|_W(\|\mathbf{u}^{K}\|_W+\|\mathbf{u}^{0}\|_W).\label{eq:Rkupperboundcrossterms}
\end{align}
In addition, following the proof of \cite[Theorem 1] {wu2022distributed}, we can show via Proposition~\ref{pro:propertyofproby}(\ref{pro:Gycontinuous}) that $\|\mathbf{G}(\mathbf{y}^{K})\|^2\le \frac{1}{2}\|\mathbf{q}^{K}\|^2+\tilde{\beta}^2\|\mathbf{y}^{K}-\mathbf{y}^\star\|^2+\|\mathbf{G}(\mathbf{y}^\star)\|^2$. By incorporating this into the expression of $S^K$ given in Lemma~\ref{lemma:boundedsumF} and because $\frac{1}{\gamma}\ge \tilde{\beta}^2+\frac{1}{\rho}\|\mathbf{B}^{T}\mathbf{B}\|$, 
\begin{equation}\label{eq:Rklowerbound}
	S^{K}\ge\frac{\rho}{2}\|\mathbf{u}^{K}\|_W^2+\frac{1}{4}\|\mathbf{q}^{K}\|^2-\frac{1}{2}\|\mathbf{G}(\mathbf{y}^\star)\|^2.
\end{equation}
Thus, \eqref{eq:Rkupperboundcrossterms} and \eqref{eq:Rklowerbound} together result in 
\begin{equation} \label{eq:constraintboundforK}
	\frac{\rho}{2}(\|\mathbf{u}^{K}\|_W-\|\mathbf{u}^\star\|_W)^2+\frac{1}{4}(\|\mathbf{q}^{K}\|-2\|\bm{\lambda}^\star\|)^2\le C.
\end{equation}
It can be seen that $\|\mathbf{q}^{K}\| \le  2(\|\bm{\lambda}^\star\|+\sqrt{C})$. This completes the proof of \eqref{eq:qbound} for each $k\ge0$.

Finally, note from \eqref{eq:constraintboundforK} that $\|\mathbf{u}^{K}-\mathbf{u}^{0}\|_W\le\|\mathbf{u}^{0}\|_W+\|\mathbf{u}^\star\|_W+(\|\mathbf{u}^{K}\|_W-\|\mathbf{u}^\star\|_W)\le\|\mathbf{u}^{0}\|_W+\|\mathbf{u}^\star\|_W+\sqrt{\frac{2C}{\rho}}$. Since $K$ can be any positive integer and \eqref{eq:ubound} is satisfied when $k=0$, we conclude that \eqref{eq:ubound} holds for all $k\ge0$.
\end{proof}

From \eqref{eq:qbound}, $\mathbf{q}^{k} \le 2(\|\bm{\lambda}^\star\|+\sqrt{C})\mathbf{1}_{np}$ $\forall k \ge 0$, which, together with \eqref{eq:Gybarbound}, leads to \eqref{eq:Gybarkupperbound}. In addition, since $P^W=(P^W)^T \succeq \mathbf{O}$ and $P^W\mathbf{1}_n=\mathbf{1}_n$, we have $P^W-\frac{\mathbf{1}_n\mathbf{1}_n^T}{n} \succeq \mathbf{O}$. Hence, $\|(\mathbf{1}_n\otimes I_{m+p})^T(\mathbf{u}^{k}-\mathbf{u}^{0})\|^2\le n(\mathbf{u}^{k}-\mathbf{u}^{0})^T (P^W \otimes I_{m+p}) (\mathbf{u}^{k}-\mathbf{u}^{0})=n\|\mathbf{u}^{k}-\mathbf{u}^{0}\|^2_W$. It follows from \eqref{eq:Bybarbound} and \eqref{eq:ubound} that \eqref{eq:Bybarcupperbound} holds.

\subsection{Proof of Theorem~\ref{theo:funcval}}\label{proofoftheorem2}
Let $k\ge1$. Because $\frac{I}{\gamma}-\frac{\mathbf{B^T}\mathbf{B}}{\rho} \succ \mathbf{O}$ and because of \eqref{eq:qkplus1normlargethang}, we have $S^k\ge 0$, where $S^k$ is defined in Lemma~\ref{lemma:boundedsumF}. This, along with \eqref{eq:sumPhiykystarupperbound}, results in
$\sum_{t=1}^k (\mathbf{f}(\mathbf{y}^{t})-\mathbf{f}(\mathbf{y}^\star)) \le S^{0}$. Moreover, due to the convexity of $\mathbf{f}$, $\mathbf{f}(\bar{\mathbf{y}}^k)-\mathbf{f}(\mathbf{y}^\star)\le\frac{1}{k}\sum_{t=1}^k(\mathbf{f}(\mathbf{y}^{t})-\mathbf{f}(\mathbf{y}^\star))$. Therefore, $\mathbf{f}(\bar{\mathbf{y}}^k)-\mathbf{f}(\mathbf{y}^\star)\le\frac{S^{0}}{k}$.

On the other hand, since $\mathbf{f}(\mathbf{y}^\star) =\min_{\mathbf{y}\in  Y}~\mathbf{f}(\mathbf{y})+\langle \bm{\lambda}^\star,\mathbf{G}(\mathbf{y})\rangle+\langle u^\star, (\mathbf{1}_n\otimes I_{m+p})^T(\mathbf{B}\mathbf{y}-\mathbf{c})\rangle$,
\begin{align}
\mathbf{f}(\bar{\mathbf{y}}^k)-\mathbf{f}(\mathbf{y}^\star)\ge&-\langle \bm{\lambda}^\star,\mathbf{G}(\bar{\mathbf{y}}^k)\rangle
\nonumber\displaybreak[0]\\
&-\langle u^\star, (\mathbf{1}_n\otimes I_{m+p})^T(\mathbf{B}\bar{\mathbf{y}}^k-\mathbf{c})\rangle.\label{eq:PhibarykminusPhiystar}
\end{align}
Because $\bm{\lambda}^\star\ge \mathbf{0}$ and because of \eqref{eq:Gybarkupperbound},
\begin{align}
\langle\bm{\lambda}^\star,\mathbf{G}(\bar{\mathbf{y}}^k)\rangle\le \frac{2(\|\bm{\lambda}^\star\|+\sqrt{C})\mathbf{1}_{np}^T\bm{\lambda}^\star}{k}.\label{eq:QstarGbaryk}
\end{align}
Additionally, since $\mathbf{u}^\star=\mathbf{1}_n\otimes u^\star$ and due to \eqref{eq:Bybarcupperbound},
\begin{align}
&\langle u^\star, (\mathbf{1}_n\otimes I_{m+p})^T(\mathbf{B}\bar{\mathbf{y}}^k-\mathbf{c})\rangle\nonumber\displaybreak[0]\\
\le&\|u^\star\|\cdot\|(\mathbf{1}_n\otimes I_{m+p})^T(\mathbf{B}\bar{\mathbf{y}}^k-\mathbf{c})\|\nonumber\displaybreak[0]\\
\le&\frac{\rho\|\mathbf{u}^\star\|(\|\mathbf{u}^{0}\|_W+\|\mathbf{u}^\star\|_W+\sqrt{2C/\rho})}{k}\label{eq:ustarBbarykc}.
\end{align}
Incorporating \eqref{eq:QstarGbaryk} and \eqref{eq:ustarBbarykc} into \eqref{eq:PhibarykminusPhiystar} yields $\mathbf{f}(\bar{\mathbf{y}}^k)-\mathbf{f}(\mathbf{y}^\star)\ge-\frac{C^0}{k}$. This completes the proof.

\subsection{Proof of Corollary~\ref{theo:originalprobconv}}\label{proofoftheorem3}
Let $k\ge 1$. From the definitions of $\mathbf{B}$ and $\mathbf{c}$ in Section~\ref{sec:algdevelop}, 
\begin{align}
&\|(\mathbf{1}_n\otimes I_{m+p})^T(\mathbf{B}\bar{\mathbf{y}}^k-\mathbf{c})\|^2\nonumber\displaybreak[0]\\
=&\|\sum_{i \in \mathcal{V}}\bar{t}_i^k\|^2+\|\sum_{i \in \mathcal{V}}\Bigl(\bigl(\sum_{j \in \mathcal{N}_i}A_{ji}\bar{x}_i^k\bigr)-b_i\Bigr)\|^2\nonumber\displaybreak[0]\\
=&\|\sum_{i \in \mathcal{V}}\bar{t}_i^k\|^2+\|\sum_{i \in \mathcal{V}}(A_{i}\bar{x}_{\mathcal{N}_i}^k-b_i)\|^2,\label{eq:equivequalitysum}
\end{align}
where $\bar{t}_i^k=\frac{1}{k}\sum_{\ell=1}^kt_i^\ell$. This implies $\|\sum_{i \in \mathcal{V}}(A_{i}\bar{x}_{\mathcal{N}_i}^k-b_i)\| \le \|(\mathbf{1}_n\otimes I_{m+p})^T(\mathbf{B}\bar{\mathbf{y}}^k-\mathbf{c})\|$, which, together with \eqref{eq:Bybarcupperbound}, yields \eqref{eq:priequalitybound}. In addition, \eqref{eq:equivequalitysum} also leads to $\|\sum_{i \in \mathcal{V}}\bar{t}_i^k\| \le \|(\mathbf{1}_n\otimes I_{m+p})^T(\mathbf{B}\bar{\mathbf{y}}^k-\mathbf{c})\|$. By combining this, \eqref{eq:Gybarkupperbound}, and \eqref{eq:Bybarcupperbound} with $\sum_{i\in \mathcal{V}} g_i(\bar{x}^{k}_{\mathcal{N}_{i}})\le \sum_{i\in \mathcal{V}} (g_i(\bar{x}^{k}_{\mathcal{N}_{i}})-\bar{t}_i^{k})+\|\sum_{i\in \mathcal{V}} \bar{t}_i^{k}\|\mathbf{1}_p$, we obtain \eqref{eq:globineqconv}. Finally, \eqref{eq:funcvalupperbound} can be directly derived from \eqref{eq:theofuncvalupperbound}.

%\subsection{Proof of Lemma~\ref{lemma:stepsizechoose}}\label{ssec:proofoflemma4}
%Since Slater's condition holds, we have
%\begin{equation*}
%\begin{split}
%\mathbf{f}(\mathbf{y}^\star)&= \operatorname{\min}_{ \mathbf{y} \in  Y}~\mathbf{f}(\mathbf{y})+\langle \bm{\lambda}^\star,\mathbf{G}(\mathbf{y})\rangle+\langle \mathbf{u}^\star, \mathbf{B}\mathbf{y}-\mathbf{c}\rangle\\
%& \le \mathbf{f}(\tilde{\mathbf{y}})+\langle \bm{\lambda}^\star,\mathbf{G}(\tilde{\mathbf{y}})\rangle+\langle \mathbf{u}^\star, \mathbf{B}\tilde{\mathbf{y}}-\mathbf{c}\rangle\\
%&=\mathbf{f}(\tilde{\mathbf{y}})+\langle \bm{\lambda}^\star,\mathbf{G}(\tilde{\mathbf{y}})\rangle
%\end{split}.
%\end{equation*}
%Due to the fact $\bm{\lambda}^{\star} \ge \mathbf{0}$, $\|\bm{\lambda}^{\star}\| \le \sum\limits_{i=1}^{np} \lambda_i^{\star}$,
%\begin{align*}
%\|\bm{\lambda}^\star\| \le \frac{\mathbf{f}(\tilde{\mathbf{y}})-\mathbf{f}(\mathbf{y}^{\star})}{\min_{i \in \mathcal{V}, \ j \in \{1 \dots p\}}\{-\mathbf{G}_{ij}(\tilde{\mathbf{y}})\}}.
%\end{align*}
%Therefore, Lemma~\ref{lemma:stepsizechoose} holds.
%cite \cite{LIU2020108834}.

\bibliographystyle{IEEEtran}
\bibliography{2025couplingref}

\end{document}